\documentclass{lmcs} 

\keywords{double negation, negative translation, conservation, minimal logic, Glivenko's theorem}

\usepackage{hyperref}
\usepackage{amssymb}
\usepackage{prftree}

\DeclareMathOperator{\Frm}{FORM}
\DeclareMathOperator{\Fun}{FUN}
\DeclareMathOperator{\Rel}{REL}
\DeclareMathOperator{\Var}{VAR}

\newcommand{\Class}{\mathrm{C}}

\renewcommand{\phi}{\varphi}

\newcommand{\Cut}{\textnormal{Cut}}
\newcommand{\Refl}{\textnormal{Refl}}

\renewcommand{\L}{\ensuremath{\mathbf{L}}}
\newcommand{\Lp}{\ensuremath{\mathbf{Lp}}}
\newcommand{\M}{\ensuremath{\mathbf{M}}}
\newcommand{\G}{\ensuremath{\mathbf{G}}}

\newcommand{\K}{\ensuremath{\mathbf{K}}}
\newcommand{\N}{\ensuremath{\mathbf{N}}}
\newcommand{\Gm}{\ensuremath{\mathbf{G3m}}}
\newcommand{\Gmp}{\ensuremath{\mathbf{G3mp}}}
\newcommand{\Gg}{\ensuremath{\mathbf{G3g}}}
\newcommand{\Gi}{\ensuremath{\mathbf{G3i}}}
\newcommand{\Gip}{\ensuremath{\mathbf{G3ip}}}
\newcommand{\Gc}{\ensuremath{\mathbf{G3c}}}
\newcommand{\Gcp}{\ensuremath{\mathbf{G3cp}}}
\newcommand{\Gcs}{\ensuremath{\mathbf{G3cs}}}
\newcommand{\Gcsp}{\ensuremath{\mathbf{G3csp}}}
\newcommand{\Gk}{\ensuremath{\mathbf{G3k}}}

\newcommand{\Maximum}{\ensuremath{\mathrm{M}}}

\usepackage{calligra}
\DeclareMathAlphabet{\mathcalligra}{T1}{calligra}{m}{n}
\DeclareFontShape{T1}{calligra}{m}{n}{<->s*[2.2]callig15}{}
\newcommand{\gleft}{\ell}
\newcommand{\gright}{\mkern-2mu\mathcalligra{r}\mkern2mu}

\begin{document}
		
		\title{Unifying Conservation as Translation for General Calculi}
		\titlecomment{{\lsuper*}This paper is an extended and substantially revised version of~\cite{Fellin26}.
			The main theorem of~\cite{Fellin26} corresponds to
			Theorem~\ref{thm-kolmogorov} here, but is no longer taken as the starting
			point: it is recovered as a particular case of the broader framework
			developed in the present paper. The generalisation to arbitrary calculi,
			the treatment of sequent translations via pairs
			$(\gleft,\gright)$, and the additional applications and conservation
			results are new to the present paper.}
		
		\author[G.~Fellin]{Giulio Fellin}	
		\address{Dipartimento di Informatica, Università degli Studi di Verona, Strada le Grazie 15, 37134, Verona, VR, Italy}	
		\email{giulio.fellin@univr.it}  
		\lmcsorcid{0000-0002-3179-7521}
		
		\begin{abstract}
			\noindent We present an abstract framework for conservation and translation theorems between logical calculi. Unlike previous approaches, our setting does not require the underlying consequence relations to satisfy structural properties such as cut, allowing in particular for cut-free calculi. Moreover, we study translations between arbitrary calculi rather than only from a stable extension to the original calculus. Translations are formulated at the level of sequents by pairs of functions acting on antecedents and succedents, and the framework is developed for multi-succedent calculi while subsuming the single-succedent case. This yields a uniform treatment of classical results including negative translations, minimality theorems, Orevkov's conservation classes, and a characterisation of the least logic satisfying Kuroda's double negation theorem.
		\end{abstract}
		
		\maketitle
		
		The relationship between classical and constructive logics has long been a central topic in proof theory and the philosophy of mathematics. Various syntactic translations and logical extensions have been developed to reconcile classical reasoning with constructive principles, leading to a rich body of \emph{conservation theorems} that clarify how much of classical logic can be recovered within constructive systems. These conservation results typically rely on the mechanism of \emph{double negation}.
		
		Negative translations have found applications in computer science~\cite{Griffin00}, set theory~\cite{Aczel01}, arithmetic and analysis~\cite{Spector62}, and have contributed to techniques such as program extraction~\cite{SchwichtenbergWainer12} and proof mining~\cite{Kohlenbach08}. More recently, a series of works~\cite{FellinSchusterWessel22,FellinSchuster21} extended Glivenko-type conservation theorems from concrete logical systems to arbitrary sets equipped with consequence relations. These generalisations rely on the observation that double negation is an instance of a nucleus and, more fundamentally, on the fact that nuclei interact well with the structural rules of reflexivity and transitivity (or, in proof-theoretic terms, the initial sequent and cut) when passing from an extension of minimal logic to classical logic~\cite{FellinNegriSchuster2026,BorsettoFellinUustaluWan2026}. As special cases, they recover classical results of Segerberg~\cite{Segerberg68} and Kuroda~\cite{Kuroda51}. In parallel,~\cite{FellinSchuster25,Fellin26} developed a corresponding abstract framework for negative translations, encompassing the classical translations of Kolmogorov~\cite{Kolmogorov25}, Gödel~\cite{Godel33}, Gentzen~\cite{Gentzen36}, and Kuroda~\cite{Kuroda51}, as well as Glivenko's theorem itself~\cite{Glivenko29}. The two lines of generalisation, however, remained separate.
		
		The approach presented here abstracts one step further. Rather than working with entailment relations satisfying structural assumptions, we consider arbitrary calculi, thereby encompassing in particular cut-free systems and calculi that need not satisfy cut or even reflexivity. Moreover, instead of restricting attention to translations from the stable extension of a calculus back to the original calculus, we study translations between arbitrary pairs of calculi. A further conceptual shift is that translations are no longer defined merely on formulae but on sequents, by means of pairs of functions $(\gleft,\gright)$ acting independently on the antecedent and succedent. Finally, our framework is formulated for multi-succedent calculi, while naturally subsuming the single-succedent setting considered in previous work. These generalisations considerably broaden the scope of the theory: they recover, in a uniform setting, minimality results such as Segerberg's claim and Kuroda's application of the double negation shift, encompass the negative translations of Kolmogorov, Gödel, Gentzen, and Kuroda, include Orevkov's theorems on conservation classes, and provide a characterisation of the least logic satisfying Kuroda's double negation theorem.
		
		\section{A brief account of conservation and translations}
		\label{sec-history}
		
		Classical logic adheres to the principles of the excluded middle and explosion. The former asserts that every proposition must be either true or false ($A \vee \neg A$), while the latter states that from a contradiction, any proposition can be derived ($\bot \to A$). In the early 20th century, L.E.J.~Brouwer~\cite{Brouwer07,Brouwer08} criticised the principle of the excluded middle for its nonconstructive nature: showing that the nonexistence of an object leads to a contradiction allows one to conclude that the object exists, without explicitly constructing it. Rejecting the principle of excluded middle, Brouwer introduced intuitionistic logic, later formalised by Andrey Kolmogorov~\cite{Kolmogorov25}, Valery Glivenko~\cite{Glivenko28,Glivenko29}, and Arend Heyting~\cite{Heyting30}. Intuitionistic logic satisfies two key properties: the \emph{existential property} (if there exists an object satisfying a certain property, then we can explicitly provide a witness) and the \emph{disjunction property} (if a disjunction $A \vee B$ is provable, then either $A$ is provable or $B$ is provable).
		
		\subsection{Glivenko's theorems}
		
		In 1928--1929, Valery Glivenko published two brief but influential notes~\cite{Glivenko28,Glivenko29}, written largely in response to criticisms directed at intuitionistic logic by Marcel Barzin and Alfred Errera \cite{BarzinErrera27}. Barzin and Errera had misunderstood Brouwer's position, suggesting that intuitionism implied the existence of a ``third'' truth value---something neither true nor false---for statements whose truth had not yet been established.%
		\footnote{For a more detailed history of the Barzin--Errera controversy, we refer to \cite{Thiel88}.}
		
		Glivenko set out to clarify this misconception and, in doing so, gave the first formal presentation of intuitionistic logic that included the principle of explosion. He proved that the principle of excluded middle, $A \vee \neg A$, is not refutable in intuitionistic logic; indeed, one can prove $\neg\neg(A \vee \neg A)$. This shows that intuitionism does not deny the excluded middle outright, but merely refuses to assert it in the absence of a constructive proof. In this sense, the law of excluded middle behaves like the parallel postulate in geometry: it is independent of the other axioms, neither provable nor disprovable without additional assumptions.
		Thus, Glivenko's analysis refuted the idea that intuitionistic logic admits a ``third'' logical value.
		
		In the course of this work, Glivenko established two results that later became known as \emph{Glivenko's theorem}. The first states that a propositional formula is provable in classical logic if and only if its double negation is provable in intuitionistic logic. The second states that a negated propositional formula is intuitionistically provable if and only if it is classically provable.
		
		Glivenko’s theorems marks the starting point of a broader line of investigation into the relationship between classical and constructive logics. In its wake, a number of conservation results were established that aim to generalise and refine its central insight. These developments fall, broadly speaking, into three main strands.
		
		First, \emph{negative translation theorems}, such as those due to Kolmogorov, G\"odel, and Gentzen, provide systematic embeddings of classical logic into intuitionistic or minimal logic by means of formula translations. Second, \emph{minimality results}, exemplified by the work of Kuroda and Segerberg, identify the weakest logical systems in which Glivenko-type correspondences 
		continue to hold. Third, results on \emph{conservation classes} characterise classes of formulae for which classical derivability already implies intuitionistic derivability without any translation.
		
		Taken together, these results show that classical principles can often be faithfully interpreted—or even partially recovered—within constructive frameworks, provided one either restricts the class of formulae considered or modifies them in a controlled way. In this sense, they collectively extend Glivenko’s original insight into a rich and systematic theory of the interaction between classical and constructive reasoning.%
		\footnote{Strictly speaking, Kolmogorov’s result predates Glivenko’s theorem. Glivenko was aware of Kolmogorov’s work—he mentions it in a letter to Heyting prior to his publications \cite{Troelstra90}—but does not cite it in his final note \cite{Glivenko29}. It is nevertheless unlikely that Glivenko’s theorem was entirely uninfluenced by Kolmogorov’s earlier ideas.}
		
		\subsection{Negative translations}
		
		In 1925, Andrey Kolmogorov published a seminal paper \cite{Kolmogorov25} aimed at clarifying the logical foundations of intuitionism. Kolmogorov accepted the intuitionist critique of classical logic, especially the rejection of the principle of the excluded middle. However, he went further by also rejecting the principle of explosion, arguing that it lacks constructive meaning. His reasoning was that deriving an arbitrary conclusion from a contradiction does not correspond to constructing a proof of that conclusion, and hence explosion is epistemically and constructively illegitimate. Kolmogorov described this more restricted logical system as intuitionistic logic, but today it is known as \emph{minimal logic}---a term introduced by Ingebrigt Johansson in 1937 \cite{Johansson37}, likely independently of Kolmogorov's earlier work.
		
		Kolmogorov then introduced a key syntactic technique, now known as the \emph{Kolmogorov negative translation}, which provides a faithful embedding of classical logic into minimal logic. The translation $K$ recursively maps each formula $A$ to a formula $K(A)$ by double negating atomic formulae and systematically placing double negations around each logical connective and quantifier. Using modern notation, it is defined by
		\[
		\begin{array}{rclcrcl}
			K(P) &:=& \neg\neg P & \text{if $P$ is atomic}, 
			\\[1mm]
			K(A * B) &:=& \neg\neg (K(A) * K(B)) &\text{if } * \in \{\wedge, \vee, \to\}, 
			\\[1mm]
			K(\mathsf{Q} x\, A) &:=& \neg\neg \mathsf{Q} x\, K(A) &\text{if } \mathsf{Q} \in \{\forall, \exists\}.
		\end{array}
		\]
		Kolmogorov's negative translation theorem states that $A$ and $K(A)$ are classically equivalent and that $A$ is provable in classical logic if and only if $K(A)$ is provable in minimal logic. 
		
		Kolmogorov viewed minimal logic as the true logic of mathematical construction, while classical principles like excluded middle and explosion correspond to what he called \emph{pseudotruths}: a formula $A$ is a pseudotruth if its translation $K(A)$ is provable in minimal logic. These pseudotruths behave like truths within classical reasoning but do not reflect genuine constructive knowledge. According to Kolmogorov, the reason classical reasoning avoids contradictions is that it operates on pseudotruths rather than actual constructive proofs. This radical yet elegant perspective anticipated much later developments in proof theory, realisability, and constructive mathematics.
		
		In 1933, Kurt G\"odel and Gerhard Gentzen independently introduced negative translations embedding classical arithmetic into intuitionistic arithmetic, apparently unaware of Kolmogorov's earlier work. Although Gentzen completed his manuscript in that same year, he withdrew it at the galley proof stage upon learning of G\"odel's result~\cite{Godel33}. The work was eventually published three years later, in 1936~\cite{Gentzen36}. Gentzen's translation~$G$ inserts double negations in front of atomic formulae, disjunctions, and existential quantifiers, and just propagates through the other constructions, i.e.,
		\[
		\begin{array}{rclcrcl}
			G(P) &:=& \neg\neg P & \text{if $P$ is atomic}, 
			\\[1mm]
			G(A * B) &:=& G(A) * G(B) &\text{if } * \in \{\wedge, \to\}, 
			\\[1mm]
			G(A \to B) &:=& \neg\neg (G(A) \vee G(B)),
			\\[1mm]
			G(\forall x\, A) &:=& \forall x\, G(A),
			\\[1mm]
			G(\exists x\, A) &:=& \neg\neg \exists x\, G(A).
		\end{array}
		\]
		His negative translation theorem states that a formula~$A$ is classically provable if and only if $G(A)$ is provable in intuitionistic logic, and that $A$ and $G(A)$ are classically equivalent. G\"odel's variant $G'$ placed double negation also in front of implications:
		\[
		\begin{array}{rclcrcl}
			G'(A \to B) &:=& \neg\neg (G'(A) \to G'(B)), 
		\end{array}
		\]
		thus occupying a middle ground between Kolmogorov's and Gentzen's schemes. His result asserts that~$A$ is provable classically if and only if $G(A)$ is provable in intuitionistic logic. Although G\"odel's and Gentzen's translations are distinct in their original form, they are often conflated under the term \emph{G\"odel--Gentzen negative translation}~\cite{TroelstraSchwichtenberg00,FerreiraOliva13}, typically referring to Gentzen's version.
		It is also well known that, over minimal logic, the translations $K$, $G$, and $G'$ are pairwise equivalent~\cite{FerreiraOliva13}: for every formula $A$, we have
		\(
		\vdash\; K(A)\;\leftrightarrow\; G(A)\;\leftrightarrow\; G'(A).
		\)
		
		Moreover, an important contribution of both G\"odel and Gentzen is that their negative translations extend beyond logic to full arithmetic. In fact, already Kolmogorov had recognised the possibility of applying this approach to mathematical theories, not just logical formulae, thereby laying the groundwork for later developments in constructive interpretations of classical mathematics.
		
		In 1951, Sigekatu Kuroda published a summary \cite{Kuroda51} of results from his earlier Japanese papers of 1947--1948 \cite{kuroda1947sagaku,kuroda1948aristotle}. In these works, he introduced a new negative translation and used it to extend Glivenko's theorem from propositional to first-order predicate logic. Kuroda observed that Glivenko's original double negation theorem remains valid in predicate logic only for formulae without universal quantifiers. More generally, however, a corresponding conservation result can be obtained by applying what is now known as the \emph{Kuroda negative translation}. This translation inserts double negations not only in front of the entire formula but also beneath every occurrence of the universal quantifier.
		More precisely, Kuroda's negative translation is given by $\neg\neg J$, where $J$ is defined recursively by
		\[
		\begin{array}{rclcrcl}
			J(P) &:=& P & \text{if $P$ is atomic}, 
			\\[1mm]
			J(A * B) &:=& J(A) * J(B) &\text{if } * \in \{\wedge, \vee, \to\}, 
			\\[1mm]
			J(\exists x\, A) &:=& \exists x\, J(A), 
			\\[1mm]
			J(\forall x\, A) &:=& \forall x\, \neg\neg J(A).
		\end{array}
		\]
		Like Kolmogorov's translation, Kuroda's translation preserves classical equivalence: $A$ and $\neg\neg J(A)$ are classically equivalent. Moreover, $A$ is provable in classical logic if and only if $\neg\neg J(A)$ is provable in intuitionistic logic. 
		A distinctive feature of Kuroda's translation is its factorisation into the composition
		$A \mapsto J(A) \mapsto \neg\neg J(A)$.
		This separation of the translation into an inner transformation $J$ followed by an outer double negation plays an important role in later formulations of Kuroda's theorem.
		
		Unlike Kolmogorov's translation, Kuroda's original translation does not apply to minimal logic. A modified version suitable for minimal logic was introduced by Chetan Murthy in 1990~\cite{Murthy90} and is sometimes referred to as the \emph{minimal Kuroda negative translation}~\cite{Berg19,FerreiraOliva13}.
		For a more detailed account of negative translations, we refer to~\cite{FerreiraOliva13}.
		
		\subsection{Minimality results}
		\label{sec-Kuroda}
		
		In his aforementioned works (1947--1951), Kuroda further isolated a key principle needed to lift Glivenko's result to the predicate level: the 
		\emph{double negation shift for universal quantification},
		\(
		\forall x\,\neg\neg A(x)\ \to\ \neg\neg\forall x\,A(x),
		\)
		a principle that intuitionistic predicate logic does not validate.
		Assuming this shift, he showed that classical provability of a formula entails the intuitionistic provability of its \emph{ordinary double negation}---that is, of the same $\neg\neg$-translation used in Glivenko's original theorem and not Kuroda's own $\neg\neg J(A)$.
		In Kuroda's terminology, a formula is then ``free of contradiction'' exactly when its double negation is intuitionistically derivable, echoing Kolmogorov's notion of pseudotruth.
		
		In 1968, Krister Segerberg~\cite{Segerberg68} introduced what is now called \emph{Glivenko's logic}, originally denoted $JP'$. He defined it as minimal propositional logic extended by the double negation of the principle of explosion, $\neg\neg(\bot \to A)$. Segerberg also asserted%
		\footnote{Segerberg did not provide a proof or citation for this claim. A rigorous proof was given only decades later by Odintsov~\cite{Odintsov02}.}
		that $JP'$ is the weakest logic $\L$ satisfying Glivenko's theorem: a formula $A$ is provable in classical logic if and only if $\neg\neg A$ is provable in $\L$. This establishes Glivenko's logic as a foundational system with distinctive structural properties.
		
		Glivenko's logic offers a novel treatment of falsity and negation, presenting a compromise between intuitionistic logic and minimal logic. It rejects the principle of explosion, aligning with the constructive nature of minimal logic, while simultaneously refusing to outright negate the principle whenever falsity is unprovable. This ensures weak but crucial consistency: not everything becomes provable. As such, Glivenko's logic strikes a balance that is both philosophically compelling and potentially foundational for alternative reasoning frameworks.
		The characterisation of Glivenko's logic as the weakest logic satisfying Glivenko's theorem underscores its remarkable strength. By enabling classical reasoning to operate under specific constraints, particularly in proofs of negated formulae, it provides a refined framework for integrating classical and constructive methods. This duality enhances its philosophical and technical relevance.
		
		After its introduction in 1968, this topic logic received limited attention. Beyond Segerberg's foundational paper, only two brief notes in the early 1970s \cite{Woodruff70,Goldblatt74} touched upon the topic, after which it vanished from the literature for nearly three decades, until it was revived by Odintsov, mainly for technical purposes \cite{Odintsov02}.
		
		The study of Glivenko's logic becomes significantly more intricate when extended to predicate logic. Combining insights from Kuroda and Segerberg shows that, to obtain Glivenko's predicate logic---the minimal extension of minimal predicate logic for which Glivenko's theorem holds---one must incorporate both the double negation of explosion and the double negation shift. As a result, this logic is incomparable with intuitionistic predicate logic: it is neither weaker nor stronger, but rather a distinct intermediate logic lying strictly between minimal and classical predicate logics \cite{BartoliFellinTesi2026}.
		
		\subsection{Conservation classes}
		
		A third line of work concerns conservation classes, that is, classes of sequents for which classical derivability entails intuitionistic (or minimal) derivability. An early and fundamental example is given by the aforementioned variant of Glivenko's theorem \cite{Glivenko29}, which states that a negated propositional formula is intuitionistically provable if and only if it is classically provable.
		
		In 1968, Vladimir Orevkov \cite{Orevkov68} established a series of conservativity theorems for classical logic over intuitionistic and minimal first-order logics with equality. In particular, he identified seven distinct classes of single-succedent sequents—the so-called \emph{Glivenko sequent classes}—which are characterised syntactically by the absence of formulae of certain \emph{polarities}.
		
		Given a sequent $\Gamma \Rightarrow \Delta$, we stipulate that all formulae in $\Delta$ are \emph{positive} (or \emph{have a positive polarity}) and all formulae in $\Gamma$ are \emph{negative} (or \emph{have a negative polarity}). The polarity of subformulae is defined inductively by:
		\begin{itemize}
			\item If $A * B$ is positive (resp.\ negative), where $*\in\{\wedge,\vee\}$, then both $A$ and $B$ are positive (resp.\ negative);
			\item If $\mathsf{Q} x\, A$ is positive (resp.\ negative), where $\mathsf{Q}\in\{\forall,\exists\}$, then $A$ is positive (resp.\ negative);
			\item If $A \to B$ is positive (resp.\ negative), then $A$ is negative (resp.\ positive) and $B$ is positive (resp.\ negative).
		\end{itemize}
		
		The seven classes are:
		\begin{enumerate}
			\item no positive occurrences of $\to$ or $\forall$.
			\item no positive occurrences of $\to$; no negative occurrences of $\vee$.
			\item no positive occurrences of $\to$; no negative occurrences of $\forall$.
			\item no positive occurrences of $\vee$ or $\exists$; no negative occurrences of $\to$.
			\item no positive occurrences of $\to$ (except as $\neg$), $\forall$, or $\vee$;
			no negative occurrences of $\to$.
			\item no positive occurrences of $\to$ (except as $\neg$) or $\vee$;
			no negative occurrences of $\to$ or $\vee$.
			\item no positive occurrences of $\to$ (except as $\neg$) or $\vee$;
			no negative occurrences of $\to$ or $\forall$.
		\end{enumerate}
		
		Orevkov proved that, for each of these classes, classical derivability entails intuitionistic derivability. Moreover, the classes are optimal in the sense that any collection of sequents enjoying this conservativity property is contained in one of the seven classes.
		
		In recent years, streamlined proofs of conservativity for several of Orevkov’s Glivenko classes have been obtained. For example, Sara Negri~\cite{Negri03} gave a notably concise and purely logical proof for the first Glivenko class in the setting of coherent theories, using $\mathbf{G3}$-style sequent calculi. This approach was subsequently extended to geometric theories~\cite{Negri16}, and later generalised to encompass all seven Glivenko sequent classes~\cite{Negri21,FellinNegriOrlandelli22}.
		
		\section{Calculi}
		
		Let $S$ be a set and let $S^*$ denote the set of finite lists over $S$.
		For $\Gamma,\Delta\in S^*$, we write
		$\Gamma \Rightarrow \Delta$
		for the pair $(\Gamma,\Delta)$, called a \emph{sequent}.
		Sequents occurring in rules may contain metavariables ranging over elements of $S$ as well as context variables ranging over finite lists in $S^*$. Thus a sequent may consist of a fixed part together with a variable part that is instantiated uniformly when the rule is applied.
		
		A \emph{rule} is a rule schema of the form
		\[
		\prftree[r]{$r$}
		{\{\Gamma_i\Rightarrow\Delta_i : i\in I\}}
		{\Gamma\Rightarrow\Delta},
		\]
		where $I\subseteq\mathbb N$. An \emph{instance} of a rule is obtained by simultaneously substituting the metavariables occurring in the rule.
		
		A \emph{calculus} $\L$ on $S$ is a set of rules. An \emph{$\L$-derivation} is a finite well-founded proof tree whose nodes are sequents and whose every inference is an instance of a rule in $\L$. We write
		\[
		\L\vdash \Gamma\Rightarrow\Delta
		\]
		if there exists an $\L$-derivation with conclusion $\Gamma\Rightarrow\Delta$.
		
		A rule $r$ is \emph{derivable} in $\L$ if, for every instance of $r$, there exists an $\L$-derivation of its conclusion from its premisss. Following \cite[p.~21]{NegriPlato11}, a rule is \emph{admissible} in $\L$ if derivability is closed under the rule, that is, whenever all premisss of an instance of the rule are derivable in $\L$, so is its conclusion.
		
		Among the rules that can appear in a calculus, we want to mention the single-conclusion rules of reflexivity and cut:
		\begin{align*}
			\prftree[r]{\Refl}{A\Rightarrow A}
			&&
			\prftree[r]{\Cut}{\Gamma\Rightarrow A}{\Delta_1,A,\Delta_2\Rightarrow B}{\Delta_1,\Gamma,\Delta_2\Rightarrow B}
		\end{align*}
		These have been shown to work well in the context of nuclei \cite{BorsettoFellinUustaluWan2026,FellinNegriSchuster2026} and for this reason in previous work have always been taken as primitive. However, in the present article we do not require it.
		
		\begin{rem}
			\label{rmk-calculiorders}
			Let $\L,\M$ be calculi on $S$. We write
			\[
			\begin{aligned}
				\L\sqsubseteq\M
				&\quad\text{if every rule of $\L$ is derivable in $\M$,}
				\\[0.3em]
				\L\preceq\M
				&\quad\text{if every rule of $\L$ is admissible in $\M$,}
				\\[0.3em]
				\L\leq\M
				&\quad\text{if }\L\vdash\Gamma\Rightarrow\Delta
				\text{ implies }\M\vdash\Gamma\Rightarrow\Delta.
			\end{aligned}
			\]	
			
			We also write $\L\cong\M$ if both $\L\preceq\M$ and $\M\preceq\L$.
			
			Then it is immediate to see that the relations $\sqsubseteq$, $\preceq$, and $\leq$ are all transitive relations, and moreover each of the following items entails the next one:
			\[
			\L\subseteq\M
			\quad\Longrightarrow\quad
			\L\sqsubseteq\M
			\quad\Longrightarrow\quad
			\L\preceq\M
			\quad\Longrightarrow\quad
			\L\leq\M.
			\]
			
			We notice that none of the implications can be reversed \cite{FellinNegriSchuster2026}.
			To show this, consider $S=\{\top,\bot\}$ and the rules
			\begin{align*}
				\prftree[r]{Refl$^+$}{\Gamma_1,A,\Gamma_2\rhd A}
				&&
				\prftree[r]{$r_1$}{\top,\bot\rhd \bot}
				&&
				\prftree[r]{$r_2$}{{}\rhd \top}{{}\rhd \bot}
				&&
				\prftree[r]{$r_3$}{\Gamma \rhd \top}
			\end{align*}
			and define $\L_0:=\{\textnormal{Refl}^+\}$, $\L_1:=\{\textnormal{Refl}^+,r_1\}$, $\L_2:=\{\textnormal{Refl}^+,r_2\}$, and $\L_3:=\{\textnormal{Refl}^+,r_3\}$.
			First, observe that the rule $r_2$ is admissible in $\L_0$ since its premiss cannot be derived so any conclusion whatsoever follows. This means that $\L_2\preceq\L_0\subseteq\L_3$, which, as observed above, gives $\L_2\leq\L_3$.
			On the other hand, $r_2$ is not admissible in $\L_3$: the premiss is always derivable by $r_3$, but the conclusion can never be derived.
			Thus $\L_2\npreceq\L_3$.%
			\footnote{
				The example provided here is a concrete example showing a rule that is admissible in a calculus but does not remain admissible in every inductive extension. However, it is quite simplified and artificial.
				A more typical example is a sequent calculus in which the cut rule is admissible; upon extending the calculus with additional axioms, however, cut often ceases to be admissible \cite{NegriPlato01,NegriPlato11}.
			}
			We have observed that $\L_2\preceq\L_0$. On the other hand, we have a rule, that is $r_2$, which is derivable in $\L_2$ but not in $\L_0$. Thus $\L_2\not\sqsubseteq\L_0$.
			Lastly, we observe that $r_1$ is derivable in $\L_0$ by $\textnormal{Refl}^+$. Therefore $\L_1\sqsubseteq\L_0$. However, it is clear that $\L_1\nsubseteq\L_0$.
		\end{rem}
		
		\begin{rem}
			For uniformity, our framework is presented in terms of multi-succedent calculi. This entails no loss of generality, since any single-succedent calculus can be viewed as a multi-succedent calculus in which no inference rule produces a sequent with multiple formulae in the succedent. Consequently, every derivation remains single-succedent, and all the results developed below apply equally to the single-succedent setting.
		\end{rem}
		
		\subsection{Conservation as Translation}
		
		We observe that the three families of conservation theorems discussed in Section~\ref{sec-history} share a common abstract pattern:
		\[
		\M \vdash \Gamma \Rightarrow A,\text{ if and only if }\L \vdash \gleft(\Gamma) \Rightarrow \gright(A),
		\]
		where $\gleft$ and $\gright$ are operations on formulae, extended to contexts. In the cases of interest, $\M$ is typically a calculus for classical logic, while the choice of $\L$, $\gleft$, and $\gright$ varies according to the type of result under consideration:
		\begin{enumerate}
			\item Minimality results in the style of Kuroda and Segerberg take $\L$ to be an extension of a chosen base calculus (intuitionistic or minimal), with $\gleft$ the identity and $\gright(A):=\neg\neg A$;
			
			\item Translation results in the style of Kolmogorov, Gödel, and Gentzen take $\L$ to be a calculus for minimal (or, in some cases, intuitionistic) logic, with $\gleft=\gright=k$, where $k$ is a suitable negative translation;
			
			\item Results concerning conservation classes such as Orevkov's take $\L$ to be a calculus for intuitionistic logic and restrict attention to those sequents $\Gamma\Rightarrow A$ satisfying $\gleft(\Gamma)=\Gamma$ and $\gright(A)=A$.
		\end{enumerate}
		Our goal is to identify the structural features common to these situations and thereby obtain a uniform generalisation within an abstract framework.
		
		So let $\L$ be a calculus on $T$, and let $\M$ be a calculus on $S$. We consider sequents of the form $\gleft(\Gamma) \Rightarrow \gright(\Delta)$, where $\gleft, \gright \colon S^* \to T^*$. In this setting, the pair $(\gleft,\gright)$ induces a mapping from sequents derivable in $\M$ to sequents derivable in $\L$. We therefore regard $(\gleft,\gright)$ as a morphism and write $(\gleft,\gright) \colon \M \to \L$.
		
		\begin{defi}
			Let $\gleft,\gright\colon S^*\to T^*$.
			Given a rule $r$ on $S$, we define $r_{(\gleft,\gright)}$ on $T$ as follows:
			\begin{align*}
				\prftree[r]{\textnormal{$r$}}{\{\Gamma_i\Rightarrow \Delta_i\colon i \in I\}}{\Gamma\Rightarrow \Delta}
				&&\rightsquigarrow&&
				\prftree[r]{\textnormal{$r_{(\gleft,\gright)}$}}{\{\gleft(\Gamma_i)\Rightarrow \gright(\Delta_i)\colon i \in I\}}{\gleft(\Gamma)\Rightarrow \gright(\Delta)}
			\end{align*}
			We also write 
			$\M_{(\gleft,\gright)}:=\{r_{(\gleft,\gright)}\colon r\in\M \}.$
		\end{defi}
		
		\begin{defi}\
			Let $\L$ be a calculus on $T$, let $\M$ be a calculus on $S$, and let $(\gleft,\gright)\colon\M\to\L$.
			\begin{enumerate}
				\item The map $(\gleft,\gright)$ is a \emph{sound translation} from $\M$ to $\L$ if
				for all sequents $\Gamma \Rightarrow \Delta$,
				\[
				\M \vdash \Gamma \Rightarrow \Delta
				\;\text{ implies }\;
				\L \vdash \gleft(\Gamma) \Rightarrow \gright(\Delta).
				\]
				\item The map $(\gleft,\gright)$ is a \emph{faithful translation} from $\M$ to $\L$ if
				for all sequents $\Gamma \Rightarrow \Delta$,
				\[
				\L \vdash \gleft(\Gamma) \Rightarrow \gright(\Delta)
				\;\text{ implies }\;
				\M \vdash \Gamma \Rightarrow \Delta.
				\]
				\item A function $k\colon S^*\to T^*$ is a (\emph{sound} / \emph{faithful}) \emph{translation} from $\M$ to $\L$ if the pair $(k,k)$ is a (sound / faithful) translation from $\M$ to $\L$.
			\end{enumerate}
		\end{defi}
		
		\begin{prop}
			\label{thm-main-t}
			Let $\L$ be a calculus on $T$, let $\M$ be a calculus on $S$, and let $(\gleft,\gright)\colon \M\to\L$.
			Define $\G := \L \cup \M_{(\gleft,\gright)}$.
			Then:
			\begin{enumerate}
				\item \textbf{Soundness.}
				\label{item-main-t-sound}
				The map $(\gleft,\gright)\colon \M \to \G$ is a sound translation.
				
				Moreover, $(\gleft,\gright)\colon \M \to \K$ is sound for any $\K$ such that $\G\preceq\K$.
				
				\item 
				\textbf{Minimality.}
				\label{item-main-t-min}
				Given $\K$ such that $\L\preceq\K$
				and $(\gleft,\gright)\colon\N\to\K$ is a sound and faithful translation, we have that
				$\G\preceq\K$.
			\end{enumerate}
		\end{prop}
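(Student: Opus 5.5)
The plan is to argue directly from the definitions. Soundness will be an induction on $\M$-derivations. Minimality will be a rule-by-rule admissibility check, in which faithfulness of the translation into $\K$ is used to pull premisses back to $\M$. Throughout, I read an instance of $r_{(\gleft,\gright)}$ as the image under $(\gleft,\gright)$ of an instance of $r$. That is, we first substitute the metavariables of $r$ and then apply $\gleft$ to antecedents and $\gright$ to succedents. This is the only sensible reading, since $\gleft,\gright$ act on $S^*$ and not on schemata. I also read the $\N$ in the minimality clause as $\M$, i.e.\ the hypothesis is that $(\gleft,\gright)\colon\M\to\K$ is sound and faithful.

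For soundness, I take an $\M$-derivation of $\Gamma\Rightarrow\Delta$ and proceed by induction on its height. Suppose the last inference is an instance of $r\in\M$ with premisses $\Gamma_i\Rightarrow\Delta_i$ ($i\in I$). By the induction hypothesis, $\G\vdash\gleft(\Gamma_i)\Rightarrow\gright(\Delta_i)$ for every $i$. The sequents $\gleft(\Gamma_i)\Rightarrow\gright(\Delta_i)$ over $\gleft(\Gamma)\Rightarrow\gright(\Delta)$ form an instance of $r_{(\gleft,\gright)}$, which lies in $\M_{(\gleft,\gright)}\subseteq\G$. Grafting the derivations gives $\G\vdash\gleft(\Gamma)\Rightarrow\gright(\Delta)$. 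The base case is a rule with empty $I$ and is the same argument with no premisses. For the ``moreover'' part, $\G\preceq\K$ entails $\G\leq\K$ by Remark~\ref{rmk-calculiorders}. Composing this with soundness into $\G$ gives soundness into $\K$.

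For minimality, I must show that every rule of $\G=\L\cup\M_{(\gleft,\gright)}$ is admissible in $\K$. Rules of $\L$ are admissible by the hypothesis $\L\preceq\K$. For a rule $r_{(\gleft,\gright)}$ with $r\in\M$, I take an instance whose premisses $\gleft(\Gamma_i)\Rightarrow\gright(\Delta_i)$ are all derivable in $\K$. By faithfulness, $\M\vdash\Gamma_i\Rightarrow\Delta_i$ for each $i$. Applying the corresponding instance of $r$ gives $\M\vdash\Gamma\Rightarrow\Delta$. By soundness, $\K\vdash\gleft(\Gamma)\Rightarrow\gright(\Delta)$, so the conclusion is derivable and the rule is admissible.

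The only real obstacle is the bookkeeping point flagged above. Faithfulness can only be applied if each $\K$-derivable premiss is literally of the form $\gleft(\Gamma_i)\Rightarrow\gright(\Delta_i)$ for the same $\Gamma_i,\Delta_i$ that form an instance of $r$. This holds precisely because instances of $r_{(\gleft,\gright)}$ are taken to be images of instances of $r$. Under the alternative reading, where metavariables are substituted inside the translated rule, it could fail. I would therefore state this reading explicitly at the start of the proof.
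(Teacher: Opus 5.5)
Your proposal is correct and follows the paper's proof essentially step for step. Soundness is an induction on $\M$-derivations closed off by $r_{(\gleft,\gright)}\in\G$, with $\G\preceq\K$ giving $\G\leq\K$ for the ``moreover'' part; minimality is the faithfulness, then $r$, then soundness round trip showing each $r_{(\gleft,\gright)}$ admissible in $\K$. Your readings of $\N$ as $\M$ and of instances of $r_{(\gleft,\gright)}$ as images of instances of $r$ are exactly what the paper's argument tacitly uses; the paper's own proof even retains the $\N$ typo.
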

		
		\begin{proof}\
			\begin{enumerate}
				\item We prove soundness from $\M$ to $\G$ by induction on the derivation of $\M\vdash\Gamma\Rightarrow \Delta$. Suppose that it has been derived by an application of $r\in\M$. Then
				\begin{align*}
					\prftree[r]{\textnormal{$r$}}{\cdots}{\M\vdash\Gamma_i\Rightarrow \Delta_i}{\cdots}{\M\vdash\Gamma\Rightarrow \Delta}
					&&\rightsquigarrow&&
					\prftree[r]{\textnormal{$r_{(\gleft,\gright)}$}}{\cdots}{
						\prftree[r]{i.h.}{\M\vdash\Gamma_i\Rightarrow \Delta_i}{\G\vdash\gleft(\Gamma_i)\Rightarrow \gright(\Delta_i)}
					}{\prfassumption\cdots}{\G\vdash\gleft(\Gamma)\Rightarrow \gright(\Delta)}
				\end{align*}
				where $r_{(\gleft,\gright)}$ can be applied since $\M_{(\gleft,\gright)}\subseteq\G$.
				
				Then soundness from $\M$ to $\K$ follows by transitivity of $\leq$ since $\M\leq\G_{(\gleft,\gright)}\leq\K_{(\gleft,\gright)}$.
				
				\item Let $r\in\M$. Then it is admissible in $\N$ and
				\begin{equation*}
					\prftree[r]{\textnormal{Sound}}{
						\prftree[r]{\textnormal{$r$}}{\cdots}{
							\prftree[r]{\textnormal{Faithful}}
							{\K\vdash\gleft(\Gamma_i)\Rightarrow \gright(\Delta_i)}
							{\N\vdash\Gamma_i\Rightarrow \Delta_i}
						}{\prfassumption\cdots}
						{\N\vdash\Gamma\Rightarrow\Delta}
					}{\K\vdash\gleft(\Gamma)\Rightarrow \gright(\Delta)}
				\end{equation*}
				which means that $r_{(\gleft,\gright)}$ is admissible in $\K$. Therefore $\M_{(\gleft,\gright)}\preceq\K$ and, since $\L\preceq\K$, we conclude $\G\preceq\K$.
				\qedhere
			\end{enumerate}
		\end{proof}
		
		We observe that in all three families of conservation theorems discussed in Section~\ref{sec-history}, the language of calculi $\L$ and $\M$ is always the same, which in our setting means $S=T$.
		In this situation, we find a very simple sufficient condition for faithfulness:
		
		\begin{lem}
			\label{lem-Mfaithful}
			Let $\L$ and $\M$ be calculi on $S$ such that $\L\preceq\M$, and
			let $(\gleft,\gright)\colon \M\to\M$ be a sound and faithful translation.
			Define $\G:=\L\cup\M_{(\gleft,\gright)}$.
			Then:
			\begin{enumerate}
				\item $\L\preceq\G\preceq\M$.
				\item \textbf{Faithfulness.}
				The map $(\gleft,\gright)\colon \M\to\G$ is a faithful translation.
			\end{enumerate}
		\end{lem}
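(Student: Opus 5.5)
The plan is to prove the two items in order, with item (1) doing most of the work and item (2) following by combining soundness with faithfulness.

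For item (1), the inclusion $\L\subseteq\G$ gives $\L\preceq\G$ at once, since a rule of $\L$ is trivially derivable, and hence admissible, in $\G$. For $\G\preceq\M$ we apply the Minimality part of Proposition~\ref{thm-main-t} with $\K:=\M$ and $\N:=\M$. Its hypotheses are exactly ours: $\L\preceq\M$, and $(\gleft,\gright)\colon\M\to\M$ is sound and faithful. The proposition then yields $\G\preceq\M$.

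If one prefers to see this directly, the argument is as follows. Admissibility of the rules of $\L$ in $\M$ is assumed. For a rule $r_{(\gleft,\gright)}\in\M_{(\gleft,\gright)}$, suppose its premisses $\gleft(\Gamma_i)\Rightarrow\gright(\Delta_i)$ are $\M$-derivable.
\begin{itemize}
\item By faithfulness of $(\gleft,\gright)\colon\M\to\M$, each $\Gamma_i\Rightarrow\Delta_i$ is $\M$-derivable.
\item Applying $r\in\M$ gives $\Gamma\Rightarrow\Delta$.
\item By soundness, $\gleft(\Gamma)\Rightarrow\gright(\Delta)$ is $\M$-derivable.
\end{itemize}
Since every rule of $\G$ is admissible in $\M$, we get $\G\preceq\M$.

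For item (2), assume $\G\vdash\gleft(\Gamma)\Rightarrow\gright(\Delta)$. From item (1) we have $\G\preceq\M$, and by Remark~\ref{rmk-calculiorders} this gives $\G\leq\M$. Hence $\M\vdash\gleft(\Gamma)\Rightarrow\gright(\Delta)$. Faithfulness of $(\gleft,\gright)\colon\M\to\M$ then gives $\M\vdash\Gamma\Rightarrow\Delta$, as required.

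The one point that needs care is the implication $\G\preceq\M\Rightarrow\G\leq\M$ used in item (2). It is justified by induction on $\G$-derivations: at each inference we replace the $\G$-rule by an appeal to its admissibility in $\M$. This is exactly the implication recorded in Remark~\ref{rmk-calculiorders}, so no new argument is needed. Beyond this, the proof is a bookkeeping exercise in matching the hypotheses of Proposition~\ref{thm-main-t}.
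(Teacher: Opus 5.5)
Your proof is correct and follows the paper's argument. You obtain $\G\preceq\M$ from the Minimality part of Proposition~\ref{thm-main-t} with $\K=\M$, then derive faithfulness by passing a $\G$-derivation to $\M$ via $\G\preceq\M$ and applying faithfulness of $(\gleft,\gright)\colon\M\to\M$. You also make explicit $\L\preceq\G$ (from $\L\subseteq\G$) and the unfolded admissibility argument for $\M_{(\gleft,\gright)}\preceq\M$, both of which the paper leaves implicit.
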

		
		\begin{proof}
			First, notice that $\G\preceq\M$ by Proposition \ref{thm-main-t}(\ref{item-main-t-min}) with $\M$ in place of $\K$.
			Now let $\G\vdash\gleft(\Gamma)\Rightarrow\gright(\Delta)$. Since $\G\preceq\M$, we get $\M\vdash\gleft(\Gamma)\Rightarrow\gright(\Delta)$ which, by faithfulness from $\M$ to $\M$, gives $\M\vdash\Gamma\Rightarrow\Delta$. Therefore we have faithfulness from $\M$ to $\G$.
		\end{proof}
		
		\begin{thm}
			\label{thm-main}
			Let $\L$ and $\M$ be calculi on $S$, and let $(\gleft,\gright)\colon \M\to\L$ such that $(\gleft,\gright)\colon \M\to\M$ is sound and faithful.
			Define $\G := \L \cup \M_{(\gleft,\gright)}$.
			Then:
			\begin{enumerate}
				\item \textbf{Soundness.}
				\label{item-main-sound}
				The map $(\gleft,\gright)\colon \M \to \G$ is a sound translation.
				
				Moreover, $(\gleft,\gright)\colon \M \to \K$ is sound for any $\K$ such that $\G\preceq\K$.
				
				\item \textbf{Faithfulness.}
				\label{item-main-faithful}
				The map $(\gleft,\gright)\colon \M \to \K$ is a faithful translation for any $\K\preceq\M$.
				
				In particular, this holds for $\K=\L$ and for $\K=\G$.
				
				\item 
				\textbf{Minimality.}
				\label{item-main-min}
				Given $\K$ such that $\L\preceq\K$
				and $(\gleft,\gright)\colon\M\to\K$ is a sound and faithful translation, we have that $\G\preceq\K$.
			\end{enumerate}
		\end{thm}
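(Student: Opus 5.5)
The plan is to obtain all three items from Proposition~\ref{thm-main-t} and Lemma~\ref{lem-Mfaithful}, adding only the admissibility bookkeeping needed to handle an arbitrary $\K$. The one genuinely new step is faithfulness into an arbitrary $\K\preceq\M$.

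\textbf{Soundness.} Item~(\ref{item-main-sound}) is exactly Proposition~\ref{thm-main-t}(\ref{item-main-t-sound}) with $T=S$. That proposition needs no hypothesis on $(\gleft,\gright)$, so both claims carry over verbatim. For the record, the argument is as follows. One inducts on $\M$-derivations and translates each application of $r\in\M$ into an application of $r_{(\gleft,\gright)}\in\G$. The passage to $\K$ then uses $\G\preceq\K\Rightarrow\G\leq\K$ from Remark~\ref{rmk-calculiorders}.

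\textbf{Faithfulness.} For item~(\ref{item-main-faithful}), let $\K\preceq\M$ and suppose $\K\vdash\gleft(\Gamma)\Rightarrow\gright(\Delta)$.
\begin{itemize}
\item By Remark~\ref{rmk-calculiorders}, $\K\preceq\M$ gives $\K\leq\M$, hence $\M\vdash\gleft(\Gamma)\Rightarrow\gright(\Delta)$.
\item Faithfulness of $(\gleft,\gright)\colon\M\to\M$, which is a hypothesis of the theorem, then yields $\M\vdash\Gamma\Rightarrow\Delta$.
\end{itemize}
For the special cases, $\K=\G$ is allowed because Lemma~\ref{lem-Mfaithful} gives $\G\preceq\M$. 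The case $\K=\L$ is the subtle one. The theorem does not explicitly assume $\L\preceq\M$, but Lemma~\ref{lem-Mfaithful} requires it. I would handle this by noting that $\L\preceq\M$ is clearly the intended standing hypothesis, since it is what the lemma assumes. Under that hypothesis we have $\L\preceq\G\preceq\M$, and the case $\K=\L$ follows.

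\textbf{Minimality.} Item~(\ref{item-main-min}) is Proposition~\ref{thm-main-t}(\ref{item-main-t-min}) with $\N:=\M$. The hypotheses match: $\L\preceq\K$, and $(\gleft,\gright)\colon\M\to\K$ is sound and faithful. The concern is that the proof of that proposition needs each rule of $\N$ to be admissible in $\N$. For $\N=\M$ this is automatic, since every rule of $\M$ is even derivable in $\M$.

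Concretely, take a rule $r\in\M$ and an instance of it whose translated premisses $\gleft(\Gamma_i)\Rightarrow\gright(\Delta_i)$ are all derivable in $\K$.
\begin{itemize}
\item Faithfulness into $\K$ gives $\M\vdash\Gamma_i\Rightarrow\Delta_i$ for each $i$.
\item Applying $r$ gives $\M\vdash\Gamma\Rightarrow\Delta$.
\item Soundness into $\K$ gives $\K\vdash\gleft(\Gamma)\Rightarrow\gright(\Delta)$.
\end{itemize}
So $r_{(\gleft,\gright)}$ is admissible in $\K$. Hence $\M_{(\gleft,\gright)}\preceq\K$, and together with $\L\preceq\K$ this gives $\G\preceq\K$, because admissibility is checked rule by rule.

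I expect no real obstacle. The only delicate point is the one flagged above: the special case $\K=\L$ in item~(\ref{item-main-faithful}) depends on the standing assumption $\L\preceq\M$, inherited from Lemma~\ref{lem-Mfaithful}.
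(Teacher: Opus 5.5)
Your proposal is correct and follows the paper's own route. Soundness and minimality are read off from Proposition~\ref{thm-main-t} (with $\N:=\M$), and faithfulness is the argument of Lemma~\ref{lem-Mfaithful} ($\K\preceq\M$ gives $\K\leq\M$, then apply faithfulness of $(\gleft,\gright)\colon\M\to\M$), which you state for an arbitrary $\K\preceq\M$ rather than only for $\G$. You are right that the ``in particular'' clause needs the unstated hypothesis $\L\preceq\M$, inherited from Lemma~\ref{lem-Mfaithful}; note, however, that the case $\K=\G$ depends on it just as much as $\K=\L$, since $\L\subseteq\G$ makes $\G\preceq\M$ equivalent to $\L\preceq\M$ under the theorem's other hypotheses.
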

		
		\begin{proof}
			Soundness and minimality are particular cases of their counterparts in Proposition \ref{thm-main-t}, while faithfulness comes from Lemma \ref{lem-Mfaithful}.
		\end{proof}
		
		\begin{rem}[Conservation as translation]
			The first item of Theorem \ref{thm-main} gives a proof-theoretic characterisation of conservation: $\M$ is conservative over $\K$ modulo the translation $(\gleft,\gright)$ precisely when the rules of $\M$, translated through $(\gleft,\gright)$, are admissible in $\K$.
		\end{rem}
		
		We don't have a similar criterion for faithfulness in the general case in which not necessarily $S=T$. However, the following characterisation can be useful.
		
		\begin{rem}
			Assume that at least one sequent is derivable in $\M$ and that the map $(\gleft,\gright)\colon\M\to\L$ is a sound translation.
			Then $(\gleft,\gright)$ is faithful if and only if it is \emph{Lindenbaum-injective}, i.e. if
			for all sequents $\Gamma \Rightarrow \Delta$ and $\Gamma' \Rightarrow \Delta'$,
			\[
			\begin{aligned}
				\prftree[doubleline]{\L\vdash\gleft(\Gamma)\Rightarrow\gright(\Delta)}{\L\vdash\gleft(\Gamma')\Rightarrow\gright(\Delta')}
				&&
				\text{ implies }
				&&
				\prftree[doubleline]{\M\vdash\Gamma\Rightarrow\Delta}{\M\vdash\Gamma'\Rightarrow\Delta'}
			\end{aligned}
			\]
			where the double line denotes the rule together with its inverse.
			In fact, assume that $(\gleft,\gright)\colon \M \to \L$ is Lindenbaum-injective, and suppose that $\L\vdash\gleft(\Gamma)\Rightarrow\gright(\Delta)$. Consider a sequent $\Gamma'\Rightarrow\Delta'$ such that $\M\vdash\Gamma'\Rightarrow\Delta'$ (it exists by hypothesis). Then, by soundness, $\L\vdash\gleft(\Gamma')\Rightarrow\gright(\Delta')$.
			Since the two translated sequents are both derivable in $\L$, and $\M\vdash\Gamma'\Rightarrow\Delta'$, then Lindenbaum injectivity lets us conclude $\M\vdash\Gamma\Rightarrow\Delta$.
			Conversely, assume that $(\gleft,\gright)\colon \M \to \L$ is faithful. Suppose that $\L\vdash\gleft(\Gamma)\Rightarrow\gright(\Delta)$ if and only if $\L\vdash\gleft(\Gamma')\Rightarrow\gright(\Delta')$
			and assume $\M\vdash\Gamma\Rightarrow\Delta$. By soundness, we get $\L\vdash\gleft(\Gamma)\Rightarrow\gright(\Delta)$, which by assumption gives $\L\vdash\gleft(\Gamma')\Rightarrow\gright(\Delta')$. By faithfulness, we conclude $\M\vdash\Gamma'\Rightarrow\Delta'$.
		\end{rem}
		
		\subsection{Logical calculi}
		
		\begin{figure}
			
			\begin{description}
				\item[\(\Gm\)] 
				\setlength{\abovedisplayskip}{.5ex}
				\setlength{\belowdisplayskip}{.5ex}
				
				\[
				\begin{array}{c@{\qquad}c@{\qquad}c}
					\multicolumn{3}{c}{\textbf{Logical rules}}\\[.5ex]
					&
					\multicolumn{2}{c}{\prftree[r]{R$\top$}{\Gamma\Rightarrow\top}}
					\\[1ex]
					
					\prftree[r]{L$\vee$}{A,\Gamma\Rightarrow C}
					{B,\Gamma\Rightarrow C}
					{A\vee B,\Gamma\Rightarrow C}
					&
					\prftree[r]{R$\vee_1$}{\Gamma\Rightarrow A}
					{\Gamma\Rightarrow A\vee B}
					&
					\prftree[r]{R$\vee_2$}{\Gamma\Rightarrow B}
					{\Gamma\Rightarrow A\vee B}
					\\[1ex]
					\prftree[r]{L$\wedge$}{A,B,\Gamma\Rightarrow C}
					{A\wedge B,\Gamma\Rightarrow C}
					&
					\multicolumn{2}{c}{\prftree[r]{R$\wedge$}{\Gamma\Rightarrow A}
						{\Gamma\Rightarrow B}
						{\Gamma\Rightarrow A\wedge B}}
					\\[1ex]
					
					\prftree[r]{L$\to$}{\Gamma\Rightarrow A}
					{B,\Gamma\Rightarrow C}
					{A\to B,\Gamma\Rightarrow C}
					&
					\multicolumn{2}{c}{\prftree[r]{R$\to$}{A,\Gamma\Rightarrow B}
						{\Gamma\Rightarrow A\to B}}
					\\[1ex]
					
					\prftree[r]{L$\exists$}{A[y/x],\Gamma\Rightarrow C}
					{\exists x\,A,\Gamma\Rightarrow C}
					&
					\multicolumn{2}{c}{\prftree[r]{R$\exists$}{\Gamma\Rightarrow A[t/x]}
						{\Gamma\Rightarrow\exists x\,A}}
					\\[-.2ex]
					
					\multicolumn{1}{c}{\text{\scriptsize($y$ fresh)}}
					&
					&
					\\[1ex]
					
					\prftree[r]{L$\forall$}{A[t/x],\forall x\,A,\Gamma\Rightarrow C}
					{\forall x\,A,\Gamma\Rightarrow C}
					&
					\multicolumn{2}{c}{\prftree[r]{R$\forall$}{\Gamma\Rightarrow A[y/x]}
						{\Gamma\Rightarrow\forall x\,A}}
					\\[-.2ex]
					
					&
					\multicolumn{2}{c}{\text{\scriptsize($y$ fresh)}}
					\\[1ex]
					
					\multicolumn{3}{c}{\textbf{Structural rules}}\\[.5ex]
					
					\prftree[r]{IS}{\Gamma_1,P,\Gamma_2\Rightarrow P}
					&
					\multicolumn{2}{c}{\prftree[r]{LE}{\Gamma_1,A,B,\Gamma_2\Rightarrow C}
						{\Gamma_1,B,A,\Gamma_2\Rightarrow C}}
				\end{array}
				\]
				
				\rule{.85\textwidth}{.4pt}
				
				\item[\(\Gi\)] \(\Gm\), plus 
				\[
				\prftree[r]{L$\bot$}{\bot,\Gamma\Rightarrow C}
				\]
				
				\rule{.85\textwidth}{.4pt}
				
				\item[\(\Gg\)] \(\Gm\), plus Cut and
				\[
				\begin{array}{c@{\qquad\qquad}c}
					\prftree[r]{R$\to_{(1,\neg\neg)}$}
					{A,\Gamma\Rightarrow\neg\neg B}
					{\Gamma\Rightarrow\neg\neg(A\to B)}
					&
					\prftree[r]{R$\forall$}
					{\Gamma\Rightarrow\neg\neg A[y/x]}
					{\Gamma\Rightarrow\neg\neg\forall x\,A}
					\\[-.2ex]
					&
					\text{\scriptsize($y$ fresh)}
				\end{array}
				\]
				
				\rule{.85\textwidth}{.4pt}
				
				\item[\(\Gcs\)] \(\Gm\), plus Cut and
				\[
				\prftree[r]{L$\neg\neg$}
				{A,\Gamma\Rightarrow C}
				{\neg\neg A,\Gamma\Rightarrow C}
				\]
				
				\rule{.85\textwidth}{.4pt}
			\end{description}
			
			\caption{The calculi \(\Gm,\Gi,\Gg,\Gcs\).}
			\label{fig:calculi}
		\end{figure}
		\begin{figure}
			
			\begin{description}
				\item[\(\Gc\)]
				\[
				\begin{array}{c@{\qquad\qquad}c}
					\multicolumn{2}{c}{\textbf{Logical rules}}\\[.5ex]
					
					\prftree[r]{L$\bot$}{\bot,\Gamma\Rightarrow\Delta}
					&
					\prftree[r]{R$\top$}{\Gamma\Rightarrow\Delta,\top}
					\\[1ex]
					
					\prftree[r]{L$\vee$}
					{A,\Gamma\Rightarrow\Delta}
					{B,\Gamma\Rightarrow\Delta}
					{A\vee B,\Gamma\Rightarrow\Delta}
					&
					\prftree[r]{R$\vee$}
					{\Gamma\Rightarrow\Delta,A,B}
					{\Gamma\Rightarrow\Delta,A\vee B}
					\\[1ex]
					
					\prftree[r]{L$\wedge$}
					{A,B,\Gamma\Rightarrow\Delta}
					{A\wedge B,\Gamma\Rightarrow\Delta}
					&
					\prftree[r]{R$\wedge$}
					{\Gamma\Rightarrow\Delta,A}
					{\Gamma\Rightarrow\Delta,B}
					{\Gamma\Rightarrow\Delta,A\wedge B}
					\\[1ex]
					
					\prftree[r]{L$\to$}
					{\Gamma\Rightarrow\Delta,A}
					{B,\Gamma\Rightarrow\Delta}
					{A\to B,\Gamma\Rightarrow\Delta}
					&
					\prftree[r]{R$\to$}
					{A,\Gamma\Rightarrow\Delta,B}
					{\Gamma\Rightarrow\Delta,A\to B}
					\\[1ex]
					
					\prftree[r]{L$\exists$}
					{A[y/x],\Gamma\Rightarrow\Delta}
					{\exists x\,A,\Gamma\Rightarrow\Delta}
					&
					\prftree[r]{R$\exists$}
					{\Gamma\Rightarrow\Delta,\exists x\,A,A[t/x]}
					{\Gamma\Rightarrow\Delta,\exists x\,A}
					\\[-.2ex]
					
					\multicolumn{1}{c}{\text{\scriptsize($y$ fresh)}}
					&
					\\[1ex]
					
					\prftree[r]{L$\forall$}
					{A[t/x],\forall x\,A,\Gamma\Rightarrow\Delta}
					{\forall x\,A,\Gamma\Rightarrow\Delta}
					&
					\prftree[r]{R$\forall$}
					{\Gamma\Rightarrow\Delta,A[y/x]}
					{\Gamma\Rightarrow\Delta,\forall x\,A}
					\\[-.2ex]
					
					&
					\multicolumn{1}{c}{\text{\scriptsize($y$ fresh)}}
					\\[1ex]
					
					\multicolumn{2}{c}{\textbf{Structural rules}}\\[.5ex]
					
					\prftree[r]{IS}
					{\Gamma_1,P,\Gamma_2\Rightarrow\Delta_1,P,\Delta_2}
					&
					\\[1ex]
					
					\prftree[r]{LE}
					{\Gamma_1,A,B,\Gamma_2\Rightarrow\Delta}
					{\Gamma_1,B,A,\Gamma_2\Rightarrow\Delta}
					&
					\prftree[r]{RE}
					{\Gamma\Rightarrow\Delta_1,A,B,\Delta_2}
					{\Gamma\Rightarrow\Delta_1,B,A,\Delta_2}
				\end{array}
				\]
				
				\rule{.85\textwidth}{0.4pt}
			\end{description}
			
			\caption{The calculus \(\Gc\).}
			\label{fig-Clas}
		\end{figure}
		
		Our main examples are the calculi for minimal, intuitionistic, Glivenko,
		and classical predicate logic, all formulated over the set $\Frm$ of
		first-order formulae.
		
		The sets of terms and formulae are generated inductively by
		\[
		\begin{aligned}
			t &::= x
			\mid f^n(t_1,\ldots,t_n),\\
			A &::= P^n(t_1,\ldots,t_n)
			\mid t_1=t_2
			\mid \bot
			\mid \top
			\mid A\wedge A
			\mid A\vee A
			\mid A\to A
			\mid \forall x\,A
			\mid \exists x\,A,
		\end{aligned}
		\]
		where $f^n\in \Fun^\mathcal{S}_n$, $P^n\in \Rel^\mathcal{S}_n$, and
		$x,x_1,\ldots,x_n\in \Var$.
		As usual, we use $x,y,z$ for variables, $t,s,r$ for terms, $P,Q,R$ for atomic formulae, $A,B,C,D$ for arbitrary formulae, and $\Gamma,\Delta,\Phi$ for contexts.
		We adopt the standard conventions concerning parentheses.
		
		Negation is introduced as the abbreviation
		$\neg A := A\to\bot.$
		
		The notions of \emph{free} and \emph{bound occurrences} of variables are
		standard. Given a formula $A$, we write $A[t/x]$ for the result of
		substituting $t$ for each free occurrence of $x$ in $A$, provided that $t$
		is free for $x$ in $A$, i.e., provided that no occurrence of $t$ becomes
		bound by a quantifier as a result of the substitution.
		
		Figure~\ref{fig:calculi} presents the calculus $\Gm$ for minimal predicate
		logic and its extensions $\Gi$, $\Gg$, and $\Gcs$ for intuitionistic,
		Glivenko, and classical predicate logic, respectively. Each of the latter
		calculi is obtained from $\Gm$ by adding suitable rules.
		
		We also consider the multi-succedent calculus $\Gc$ for classical predicate logic is shown separately in Figure~\ref{fig-Clas}.
		It is well known that \(\Gcs\) is the \emph{trace} of \(\Gc\), that is,
		\[
		\Gc \vdash \Gamma \Rightarrow A
		\quad\text{if and only if}\quad
		\Gcs \vdash \Gamma \Rightarrow A.
		\]
		In other words, \(\Gcs\) and \(\Gc\) are equivalent with respect to single-conclusion derivability. We shall therefore freely switch between them whenever one presentation is technically more convenient than the other.
		
		The calculi considered here are defined using initial sequents involving atomic
		formulae $P$, for structural reasons. Nevertheless, in both the single-succedent
		and the multi-succedent cases, the corresponding more general forms are
		admissible, respectively,
		\[
		\prftree[r]{Refl$^+$}{\Gamma_1,A,\Gamma_2\Rightarrow A}
		\qquad
		\prftree[r]{Refl$^+$}
		{\Gamma_1,A,\Gamma_2\Rightarrow\Delta_1,A,\Delta_2}
		\]
		
		For any predicate calculus $\L$, we denote by $\Lp$ its propositional fragment, obtained by removing the quantifiers together with all inference rules governing them.
		
		The calculi considered here are adaptations of those in~\cite{NegriPlato01}. Because antecedents and succedents are represented as lists of formulae, exchange must be included explicitly among the structural rules. Finally, while \Gm, \Gi, and {\Gc} enjoy cut elimination and are therefore formulated without a cut rule, {\Gg} and {\Gcs} do not; accordingly, cut is included among their primitive rules.
		
		\begin{rem}
			\label{rmk-multisingle}
			In many cases, conservation results are formulated only for single-succedent sequents, that is, sequents of the form $\Gamma \Rightarrow A$. Such sequents fit naturally within the intuitionistic single-succedent calculi $\Gm$, $\Gg$, and $\Gi$. By contrast, when working in classical logic, it is preferable to use the multi-succedent calculus $\Gc$, which enjoys a number of desirable proof-theoretic properties, most notably cut elimination.
			
			We therefore require a way of passing from a multi-succedent setting to a single-succedent one. The key observation is that, in classical logic, the connectives $\bot$ and $\vee$ allow us to encode succedents containing either zero or several formulae. More precisely:
			\begin{itemize}
				\item $\Gc\vdash\Gamma\Rightarrow {}$ if and only if $\Gc\vdash\Gamma\Rightarrow \bot$.
				\item $\Gc\vdash\Gamma\Rightarrow D_1,\ldots,D_n$ if and only if $\Gc\vdash\Gamma\Rightarrow D_1\vee\cdots\vee D_n$.
			\end{itemize}
			
			To implement this observation, whenever we pass from a multi-succedent calculus to a single-succedent one, we define the function $\gright$ so that
			\[
			\begin{array}{rclcrcl}
				\gright(\epsilon) &:=& \bot,
				\\[1mm]
				\gright(C_1,\ldots,C_n) &:=& \phi(C_1 \vee \cdots \vee C_n)
			\end{array}
			\]
			where $\epsilon$ is the empty list, for some $\phi\colon \Frm\to \Frm$.
			Since $\gright$ acts on the right-hand side of a sequent, these clauses allow any sequent with multiple succedent formulae to be represented by a single-succedent sequent that is classically equivalent to the original one.
			
			On the other hand, if we do not need to pass from a multi- to a single-succedent calculus (e.g. if we translate from $\Gcs$ to $\Gm$), then we can just impose that $\gright$ acts pointwise on the items of the list, viz.
			\[
			\begin{array}{rclcrcl}
				\gright(\epsilon) &:=& \epsilon,
				\\[1mm]
				\gright(C_1,\ldots,C_n) &:=& \gright(C_1), \ldots, \gright(C_n).
			\end{array}
			\]
		\end{rem}
		
		\section{Nuclei and minimality results}
		
		In this section and the ones that follow, we show how the conservation theorems surveyed in Section~\ref{sec-history} arise uniformly as applications of our main result, Theorem~\ref{thm-main}.
		
		We begin with Glivenko's theorem and related minimality results, such as those of Kuroda and Segerberg. To this end, we introduce the notion of a \emph{nucleus} and the associated translations. An application of Theorem~\ref{thm-main} then yields a uniform treatment of these results, recovering and further generalising the conservation theorems established in \cite{FellinSchuster21,FellinSchuster25}.
		
		\begin{defi}\
			Let $\L$ be a calculus on $T$, let $\M$ be a calculus on $S$, and let $(\gleft,\gright)\colon\M\to\L$.
			\begin{enumerate}
				\item The map $(\gleft,\gright)$ is \emph{expansive} in $\L$ if the following are admissible in $\L$:
				\begin{align*}
					\prftree[r]{R$(\gleft,\gright)$}{\Gamma\Rightarrow  \Delta_1, \gleft(\Phi),\Delta_2}{\Gamma\Rightarrow \Delta_1, \gright(\Phi), \Delta_2}
					&&\text{and}&&
					\prftree[r]{L$(\gright,\gleft)$}{\Gamma_1, \gright(\Phi), \Gamma_2\Rightarrow \Delta}{\Gamma_1,\gleft(\Phi), \Gamma_2 \Rightarrow \Delta}
				\end{align*}
				\item The map $(\gleft,\gright)$ is \emph{stable} in $\L$ if $(\gright,\gleft)$ is expansive in $\L$. 
				\item The map $(\gleft,\gright)$ is \emph{progressively monotone} in $\L$ if the following is admissible in $\L$:
				\begin{align*}
					\prftree[r]{Lp$(\gleft,\gright)$}
					{\gleft(\Gamma_1),\gleft(\Phi),\gleft(\Gamma_2)\rhd \gright(\Delta)}
					{\gleft(\Gamma_1),\gright(\Phi),\gleft(\Gamma_2)\rhd \gright(\Delta)}
				\end{align*}
				\item The map $(\gleft,\gright)$ is \emph{nuclear} in $\L$ if it is expansive and progressively monotone in $\L$.
				\item A function $j\colon T^*\to T^*$ is (\emph{expansive} / \emph{stable} / \emph{progressively monotone} / a \emph{nucleus}) in $\L$ if the pair $(1,j)$ is (expansive / stable / progressively monotone / nuclear) in $\L$, where $1\colon T^*\to T^*$ is the identity map.
				\item The \emph{$j$-stable extension} of $\L$ is defined as
				$\Maximum_j(\L):=\L\cup\{\Refl,\Cut,\textnormal{L$(1,j)$},\textnormal{R${(1,j)}$}\}$.
			\end{enumerate}
		\end{defi}
		
		\begin{rem}
			\label{rmk-jstable}
			If $\gleft,\gright\colon S^*\to S^*$ are both expansive and stable in $\M$, then
			$(\gleft,\gright)\colon \M\to\M$ is a faithful translation.
		\end{rem}
		
		\begin{thm}
			\footnote{
				This theorem is a version of~\cite[Theorem~3.5]{FellinSchuster25}.
				The original formulation appears in~\cite[Corollary~3.11]{FellinSchuster21}, where it was derived as a corollary of Theorem~3.8 (corresponding here to Corollary~\ref{cor-glivenko}).
			}
			\label{thm-kuroda}
			Let $\L$ be a calculus on $S$ such that $\Cut$ and $\Refl$ are admissible in $\L$,
			let $j\colon S^*\to S^*$, and define
			$
			\G := \L \cup \L_{(1,j)} \cup \{\Refl,\Cut,\textnormal{Lp$(1,j)$},\textnormal{R${(1,j)}$}\}.
			$
			Then:
			\begin{enumerate}
				\item \textbf{Soundness \& Faithfulness.}
				The map $(1,j)\colon \Maximum_j(\L)\to\G$ is a sound and faithful translation, i.e.
				\[
				\Maximum_j(\L)\vdash\Gamma\Rightarrow\Delta
				\quad\text{if and only if}\quad
				\G\vdash \Gamma\Rightarrow j(\Delta).
				\]
				
				\item \textbf{Minimality.}
				Given $\K$ such that $\L\preceq\K$ and $(1,j)\colon\Maximum_j(\L)\to\K$ is a sound and faithful translation, we have $\G\preceq\K$.
			\end{enumerate}
		\end{thm}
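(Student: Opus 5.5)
The plan is to obtain the statement as an instance of Theorem~\ref{thm-main}. Take $\M:=\Maximum_j(\L)$, take the pair $(1,j)$, and take as base calculus $\L':=\L\cup\{\Refl,\Cut,\textnormal{Lp$(1,j)$},\textnormal{R$(1,j)$}\}$. Theorem~\ref{thm-main} then produces $\G':=\L'\cup\Maximum_j(\L)_{(1,j)}$. We have $\Maximum_j(\L)_{(1,j)}=\L_{(1,j)}\cup\{\Refl_{(1,j)},\Cut_{(1,j)},\textnormal{L$(1,j)$}_{(1,j)},\textnormal{R$(1,j)$}_{(1,j)}\}$, so $\G'=\G\cup\{\Refl_{(1,j)},\Cut_{(1,j)},\textnormal{L$(1,j)$}_{(1,j)},\textnormal{R$(1,j)$}_{(1,j)}\}$. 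The first step is therefore to show that these four extra rules are derivable in $\G$, which gives $\G\subseteq\G'$ and $\G'\sqsubseteq\G$, hence $\G\cong\G'$. The derivations I have in mind are as follows.
\begin{itemize}
\item $\Refl_{(1,j)}$, i.e.\ $A\Rightarrow j(A)$, is $\Refl$ followed by R$(1,j)$.
\item $\Cut_{(1,j)}$ first applies Lp$(1,j)$ to the right premiss, replacing $A$ by $j(A)$ in the antecedent, and then cuts against $\Gamma\Rightarrow j(A)$.
\item $\textnormal{L$(1,j)$}_{(1,j)}$ cuts the premiss against the derived sequents $\Phi\Rightarrow j(\Phi)$.
\item $\textnormal{R$(1,j)$}_{(1,j)}$ needs $j(\Phi)\Rightarrow jj(\Phi)$ on the right. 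Since $j$ acts pointwise on succedent lists, this is just R$(1,j)$ applied to the formula(e) $j(\Phi)$.
\end{itemize}

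Next I verify the hypotheses of Theorem~\ref{thm-main}. For $\L'\preceq\Maximum_j(\L)$, the rules $\L$, $\Refl$, $\Cut$ and R$(1,j)$ are primitive in $\Maximum_j(\L)$. For Lp$(1,j)$, I derive $j(\Phi)\Rightarrow\Phi$ by $\Refl$ followed by L$(1,j)$, and then cut. For soundness of $(1,j)\colon\Maximum_j(\L)\to\Maximum_j(\L)$, I apply R$(1,j)$ to every succedent formula. For faithfulness I use Remark~\ref{rmk-jstable}: the identity is trivially expansive and stable, so it suffices to show that $j$ is stable in $\Maximum_j(\L)$. This means showing $\Gamma\Rightarrow\Delta_1,j(\Phi),\Delta_2$ entails $\Gamma\Rightarrow\Delta_1,\Phi,\Delta_2$ (cut with $j(\Phi)\Rightarrow\Phi$), and that $\Phi$ on the left can be replaced by $j(\Phi)$ (cut with $\Phi\Rightarrow j(\Phi)$). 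Theorem~\ref{thm-main}(\ref{item-main-sound}) and (\ref{item-main-faithful}) then give soundness and faithfulness for $\G'$, and since $\G\cong\G'$ they transfer to $\G$.

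For minimality I cannot simply invoke Theorem~\ref{thm-main}(\ref{item-main-min}) with base $\L'$. That would require $\L'\preceq\K$, whereas we are only given $\L\preceq\K$. Instead I would argue directly, as in Proposition~\ref{thm-main-t}(\ref{item-main-t-min}).
\begin{itemize}
\item Every rule of $\Maximum_j(\L)_{(1,j)}$ is admissible in $\K$ by the sound/faithful sandwich. In particular this covers $\L_{(1,j)}$.
\item It then remains to show that $\Refl$, $\Cut$, Lp$(1,j)$ and R$(1,j)$ are admissible in $\K$.
\end{itemize}

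This last point is the step I expect to be the main obstacle. These rules are not of translated form, so the sandwich argument does not apply to them directly. My first attempt will exploit that every sequent derivable in $\G$ has a succedent of the form $j(\cdots)$, since all rules of $\G$ other than those of $\L$ produce such succedents. On such instances, $\Cut$ and Lp$(1,j)$ coincide with $\Cut_{(1,j)}$ and the translated L-rule, and R$(1,j)$ coincides with $\textnormal{R$(1,j)$}_{(1,j)}$, all of which are already admissible in $\K$. Instances of $\Refl$ come from $\L$, where $\Refl$ is admissible and hence derivable, so it is admissible in $\K$ because $\L\preceq\K$. If this reduction to $j$-shaped instances cannot be made to cover all cases, I would fall back on showing $\G\leq\K$ by induction on $\G$-derivations. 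In that induction, each non-$\L$ step is replaced by the corresponding translated rule, with the hypothesis on $\L$ used to absorb cuts.
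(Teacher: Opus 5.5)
Your argument for the soundness/faithfulness clause is correct. With base $\L'$, the four extra rules of $\G'$ are derivable in $\G$, so $\G$ and $\G'$ derive the same sequents, and Theorem~\ref{thm-main} applies because $\L'\preceq\Maximum_j(\L)$. This is more careful than the paper, which uses base $\L$ and simply asserts $\G\cong\L\cup\Maximum_j(\L)_{(1,j)}$. There is one harmless slip. Under the paper's convention \textnormal{L}$(1,j)$ replaces $\Phi$ by $j(\Phi)$ on the left, as you yourself use when deriving $j(\Phi)\Rightarrow\Phi$. So $\textnormal{L}(1,j)_{(1,j)}$ is literally Lp$(1,j)$, and cutting against $\Phi\Rightarrow j(\Phi)$ proves the converse rule instead.

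The genuine gap is the minimality step you flag, and it cannot be closed. Of the four remaining rules, $\Refl$ (via $\L\leq\K$) and Lp$(1,j)$ ($=\textnormal{L}(1,j)_{(1,j)}$, by the sandwich) are fine. $\Cut$ and \textnormal{R}$(1,j)$ are not. Admissibility in $\K$ is closure of the set of $\K$-derivable sequents, but the hypotheses only constrain $\K$-derivable sequents whose succedent has the form $j(\Delta)$. Inspecting the instances that occur in $\G$-derivations therefore says nothing about $\K$. Moreover, $\G$-derivable sequents need not have $j$-shaped succedents, because of $\Refl$, $\Cut$ and $\L$. Your fallback only targets $\G\leq\K$, which is weaker than $\G\preceq\K$. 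Even that fails at cuts whose premisses come from \textnormal{R}$(1,j)$, since $\Cut$ is admissible in $\L$ only for $\L$-derivable premisses.

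Here is a counterexample to the clause itself. Take $S=\{a,b\}$, $j$ constantly $b$ (pointwise on lists), and $\L=\{\Refl\}$. Then $\Maximum_j(\L)$ derives exactly the sequents $x\Rightarrow y$ with $x,y\in S$. Let $\K$ have the axioms $a\Rightarrow a$, $b\Rightarrow b$, $a\Rightarrow b$, ${}\Rightarrow a$. Then $\L\preceq\K$, and $(1,j)$ is sound and faithful, since the $\K$-derivable sequents with succedent $j(\Delta)$ are just $a\Rightarrow b$ and $b\Rightarrow b$. But ${}\Rightarrow b$ is not derivable, so neither \textnormal{R}$(1,j)$ nor $\Cut$ is admissible in $\K$, and $\G\not\preceq\K$.

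A second example shows that even $\G\leq\K$ fails. Take $S=\{a,b,c\}$, $j$ constantly $b$, $\L$ equal to $\{\Refl\}$ plus the axiom $b\Rightarrow c$ (in which $\Cut$ is admissible), and $\K:=\L\cup\Maximum_j(\L)_{(1,j)}$. The hypotheses hold. Yet $\G\vdash a\Rightarrow c$, by \textnormal{R}$(1,j)$ on $a\Rightarrow a$ followed by $\Cut$ with $b\Rightarrow c$, while $\K\nvdash a\Rightarrow c$.

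The paper's proof breaks at the same point. The minimality clause of Theorem~\ref{thm-main} validly gives $\L\cup\Maximum_j(\L)_{(1,j)}\preceq\K$. Passing from there to $\G$, however, needs two things that fail. First, it needs $\G\cong\L\cup\Maximum_j(\L)_{(1,j)}$, which the second example refutes, since $\Cut$ is not admissible in the right-hand calculus. Second, it needs transitivity of $\preceq$, which the example $\L_2\preceq\L_0\subseteq\L_3$, $\L_2\not\preceq\L_3$ in Remark~\ref{rmk-calculiorders} already refutes.

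The clause becomes true in either of two ways. One is to replace $\G$ by $\L\cup\Maximum_j(\L)_{(1,j)}$. The other is to additionally assume $\Cut$ admissible in $\K$, which is automatic for the consequence relations of the cited source. Under that assumption your reduction goes through in the single-succedent case. \textnormal{R}$(1,j)$ follows by cutting with $A\Rightarrow j(A)$, which $\K$ derives by soundness.
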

		
		\begin{proof}
			First, let us recall that, having \Refl\ and \Cut, Lp$(1,j)$ and R$(1,j)$ are equiadmissible with $\Cut_{(1,j)}$ and $\Refl_{(1,j)}$, respectively \cite{BorsettoFellinUustaluWan2026,FellinNegriSchuster2026}.
			Moreover, L$(1,j)_{(1,j)}$ and R$(1,j)_{(1,j)}$ are instances of Lp$(1,j)$ and R$(1,j)$, respectively.
			Therefore $\G\cong\L\cup(\Maximum_j(\L))_{(1,j)}$. Given the observations in Remark \ref{rmk-jstable}, the claim follows from Theorem \ref{thm-main} with $(\gleft,\gright)=(1,j)$.
		\end{proof}
		
		The following addition to Theorem \ref{thm-kuroda} is also worth noting:
		
		\begin{rem}[Maximality]
			Given $\K$ such that $\G\preceq\K$ and $(1,j)\colon\Maximum_j(\L)\to\K$ is a faithful translation, we have $\K\leq\Maximum_j(\L)$. In fact, R$(1,j)$ directly gives us $\K\leq\K_{(1,j)}$, and faithfulness can be written as $\K_{(1,j)}\leq\Maximum_j(\L)$; we conclude by transitivity of $\leq$.
		\end{rem}
		
		\begin{cor}
			\footnote{This corollary is corresponds to~\cite[Theorem~3.8]{FellinSchuster21}.}
			\label{cor-glivenko}
			Let $\L$ be a calculus on $S$ such that $\Cut$ and $\Refl$ are admissible in $\L$.
			Let $j$ such that $\L_{(1,j)}\cup\{\textnormal{Lp$(1,j)$},\textnormal{R${(1,j)}$}\} \preceq \L$.
			Then
			$\Maximum_j(\L)\vdash\Gamma\Rightarrow\Delta$ if and only if $\L\vdash \Gamma\Rightarrow j(\Delta)$.
		\end{cor}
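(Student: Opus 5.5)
The plan is to derive the corollary from Theorem~\ref{thm-kuroda}, taking $\K:=\L$. Let $\G := \L \cup \L_{(1,j)} \cup \{\Refl,\Cut,\textnormal{Lp$(1,j)$},\textnormal{R${(1,j)}$}\}$ as in that theorem. Item~1 of Theorem~\ref{thm-kuroda} gives
\[
\Maximum_j(\L)\vdash\Gamma\Rightarrow\Delta \quad\text{if and only if}\quad \G\vdash\Gamma\Rightarrow j(\Delta).
\]
It therefore suffices to show that $\G\vdash\Gamma\Rightarrow j(\Delta)$ holds exactly when $\L\vdash\Gamma\Rightarrow j(\Delta)$. The plan is to prove the stronger fact $\G\cong\L$, and in particular that $\G$ and $\L$ derive the same sequents.

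First, $\L\subseteq\G$. This gives $\L\preceq\G$ and hence $\L\leq\G$ by Remark~\ref{rmk-calculiorders}, so every $\L$-derivable sequent is $\G$-derivable.

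For the converse, I will show that every rule of $\G$ is admissible in $\L$. There are three groups of rules to check:
\begin{itemize}
\item the rules of $\L$, which are trivially admissible in $\L$;
\item $\Refl$ and $\Cut$, which are admissible in $\L$ by hypothesis;
\item the rules of $\L_{(1,j)}\cup\{\textnormal{Lp$(1,j)$},\textnormal{R${(1,j)}$}\}$, which are admissible in $\L$ by the hypothesis $\L_{(1,j)}\cup\{\textnormal{Lp$(1,j)$},\textnormal{R${(1,j)}$}\}\preceq\L$.
\end{itemize}
Hence $\G\preceq\L$, and so $\G\leq\L$.

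The only point needing care is the step from ``each rule of $\G$ is admissible in $\L$'' to ``every $\G$-derivation yields an $\L$-derivation''. This is a routine induction on the height of the $\G$-derivation. At each inference, the premisses are $\L$-derivable by the induction hypothesis, and admissibility of the rule in $\L$ then makes the conclusion $\L$-derivable. This is exactly the implication $\preceq\,\Rightarrow\,\leq$ already recorded in Remark~\ref{rmk-calculiorders}.

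Combining $\G\leq\L$ and $\L\leq\G$ with item~1 of Theorem~\ref{thm-kuroda} gives $\Maximum_j(\L)\vdash\Gamma\Rightarrow\Delta$ iff $\L\vdash\Gamma\Rightarrow j(\Delta)$. I do not expect any genuine obstacle here, since all the work has been done in Theorem~\ref{thm-kuroda}.
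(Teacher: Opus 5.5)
Your proposal is correct and follows the same route as the paper, whose proof is the one-line remark that the corollary is a direct consequence of Theorem~\ref{thm-kuroda}. You make that step explicit: item~1 of the theorem gives the equivalence with $\G$, and the hypotheses yield $\G\preceq\L$ while $\L\subseteq\G$ holds by construction, so $\G$ and $\L$ derive exactly the same sequents.
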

		
		\begin{proof}
			Direct consequence of Theorem \ref{thm-kuroda}.
		\end{proof}
		
		\subsection{Glivenko's theorem}
		
		The principal—and motivating—example of a nucleus in $\Gm$ is \emph{double negation} $\neg\neg$, also referred to as the \emph{Glivenko nucleus} or the \emph{Glivenko translation}~\cite{FerreiraOliva13,Berg19,FellinSchuster21,FellinSchuster25}. 
		In this treatment, as explained in Remark \ref{rmk-multisingle}, we need to extend the definition of $\neg\neg$ to lists as 
		\[
		\begin{aligned}
			\neg\neg(\epsilon) &:= \epsilon,
			\\
			\neg\neg(C_1,\ldots,C_n) &:= \neg\neg C_1 , \ldots, \neg\neg C_n.
		\end{aligned}
		\]
		It is well known and straightforward to verify that $\neg\neg$ forms a nucleus (see e.g.~\cite{FellinSchuster25}), and we therefore omit the proof. 
		Accordingly, in what follows we shall make use of the rules Lp$(1,\neg\neg)$ and R$(1,\neg\neg)$.
		It is immediate to check that $\Maximum_{\neg\neg}(\Gm)\cong\Gcs$ and $\Maximum_{\neg\neg}(\Gmp)\cong\Gcsp$.
		
		\begin{lem}
			\label{lem-glivenko}
			$(\Gip)_{(1,\neg\neg)}\cup\{\textnormal{Lp$(1,\neg\neg)$},\textnormal{R${(1,\neg\neg)}$}\} \preceq \Gip$.
		\end{lem}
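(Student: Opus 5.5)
We have to show three things: every rule of $\Gip$, translated through $(1,\neg\neg)$, is admissible in $\Gip$; the rule Lp$(1,\neg\neg)$ is admissible in $\Gip$; and the rule R$(1,\neg\neg)$ is admissible in $\Gip$. Throughout we use that $\Gip$ is single-succedent, so $\neg\neg$ acts on a succedent consisting of exactly one formula. We may rely on the standard admissible rules of $\Gip$ (weakening, contraction, Refl$^+$, Cut) and on its invertibility lemmas, as in~\cite{NegriPlato01}.

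\textbf{The rules R$(1,\neg\neg)$ and Lp$(1,\neg\neg)$.}
For R$(1,\neg\neg)$, from $\Gamma\Rightarrow C$ we derive $\neg C,\Gamma\Rightarrow\bot$ by L$\to$, using the premiss and L$\bot$, and then apply R$\to$.

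For Lp$(1,\neg\neg)$, from $\Gamma_1,A,\Gamma_2\Rightarrow\neg\neg C$ we first invert R$\to$ to get $\neg C,\Gamma_1,A,\Gamma_2\Rightarrow\bot$. Applying R$\to$ gives $\neg C,\Gamma_1,\Gamma_2\Rightarrow\neg A$. We then use L$\to$ on $\neg\neg A$, with left premiss $\neg C,\Gamma\Rightarrow\neg A$ and right premiss $\bot,\ldots\Rightarrow\bot$, to obtain $\neg\neg A,\neg C,\Gamma\Rightarrow\bot$. Applying R$\to$ and exchange yields the desired sequent. This is the standard argument that double negation is a nucleus, already cited as known in the paper.

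\textbf{Translated logical rules.}
The translated rules $r_{(1,\neg\neg)}$ keep their antecedents unchanged and double-negate their succedents. We treat them by cases.

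\emph{Initial sequents.} For IS and L$\bot$, the translated conclusions $\Gamma_1,P,\Gamma_2\Rightarrow\neg\neg P$ and $\bot,\Gamma\Rightarrow\neg\neg C$ are derivable, using R$(1,\neg\neg)$ and L$\bot$. The rule R$\top$ is handled the same way.

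\emph{Left rules with a common succedent} (L$\wedge$, L$\vee$, LE). Here the translated rule is just an instance of the original rule with $\neg\neg C$ as the side succedent.

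\emph{L$\to$.} The translated rule has premisses $\Gamma\Rightarrow\neg\neg A$ and $B,\Gamma\Rightarrow\neg\neg C$, and conclusion $A\to B,\Gamma\Rightarrow\neg\neg C$. From the second premiss, Lp$(1,\neg\neg)$ gives $\neg\neg B,\Gamma\Rightarrow\neg\neg C$. It therefore suffices to derive $A\to B,\Gamma\Rightarrow\neg\neg B$ from $\Gamma\Rightarrow\neg\neg A$ and then cut. This holds since $\neg\neg A,\ A\to B\vdash\neg\neg B$ in minimal logic, so Cut gives it.

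\emph{Right rules.} These are the main point. For R$\vee_i$ and R$\wedge$ we need $\neg\neg A\vdash\neg\neg(A\vee B)$ and $\neg\neg A,\neg\neg B\vdash\neg\neg(A\wedge B)$, both minimal-logic facts, followed by Cut. For R$\to$ we need to pass from $A,\Gamma\Rightarrow\neg\neg B$ to $\Gamma\Rightarrow\neg\neg(A\to B)$. This requires $\neg\neg B\to\ldots$, namely $(A\to\neg\neg B)\vdash\neg\neg(A\to B)$. That principle fails in $\Gm$, since it is essentially the rule added in $\Gg$, but it holds intuitionistically: it needs $\neg\neg(\bot\to B)$, and explosion supplies it.

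So the key step, and the expected main obstacle, is deriving R$\to_{(1,\neg\neg)}$. The derivation goes as follows. Assume $\neg(A\to B)$, and aim for $\bot$. First derive $\neg B\to\bot$ fails-free as follows. From $B$ we get $A\to B$, contradicting $\neg(A\to B)$, so we obtain $\neg B$. From $A$, the premiss gives $\neg\neg B$, hence $\bot$, hence $B$ by L$\bot$, hence $A\to B$. This contradicts $\neg(A\to B)$, giving $\bot$.

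This is the only place where L$\bot$ is used substantively. It is also why the statement is restricted to the propositional fragment: the translated R$\forall$ would require the double negation shift, which is not available in $\Gi$.
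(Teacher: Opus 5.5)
Your proposal is correct and follows essentially the paper's route: a rule-by-rule admissibility check, using Cut against minimally derivable sequents for the translated right rules, Lp$(1,\neg\neg)$ for the translated L$\to$, and L$\bot$ as the single genuinely intuitionistic ingredient in R$\to_{(1,\neg\neg)}$. The only difference is in that last case. The paper first turns the premiss into $\Gamma\Rightarrow\neg\neg A\to\neg\neg B$ and then cuts against the fixed sequent $\neg\neg A\to\neg\neg B\Rightarrow\neg\neg(A\to B)$, using L$\bot$ to obtain $\neg A\Rightarrow A\to B$. You instead apply explosion directly to $\neg B,\neg\neg B$, which also requires contracting $\neg(A\to B)$ (and duplicated $\Gamma$ in your L$\to$ case); that is harmless given the admissible structural rules of $\Gip$ you invoke, whereas the paper's derivations need no contraction.
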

		
		\begin{proof}
			First, recall that $\Gip$ admits \Cut\ \cite{NegriPlato01}.
			
			\noindent
			\textbf{The rules \textnormal{Lp$(1,\neg\neg)$} and \textnormal{R$(1,\neg\neg)$}.}	
			Direct consequences
			of the fact that $\neg\neg$ is a nucleus.
			
			\noindent
			\textbf{The rule \textnormal{IS$_{(1,\neg\neg)}$}.}	
			Follows directly from IS and R$(1,\neg\neg)$.
			
			\noindent
			\textbf{The rules \textnormal{LE$_{(1,\neg\neg)}$}, \textnormal{L$\wedge_{(1,\neg\neg)}$}, and \textnormal{L$\vee_{(1,\neg\neg)}$}.}	
			They are admissible in $\Gip$, since they are respectively instances of
			LE, L$\wedge$, and L$\vee$.
			
			\noindent
			\textbf{The rule \textnormal{R$\wedge_{(1,\neg\neg)}$}.}	
			Let us consider R$\wedge_{(1,\neg\neg)}$:
			\[
			\prftree[r]{R$\wedge_{(1,\neg\neg)}$}
			{\Gamma \Rightarrow \neg\neg A}{}
			{\Gamma \Rightarrow \neg\neg B}
			{\Gamma \Rightarrow \neg\neg(A \wedge B)}
			\]
			We have
			\[
			\prftree[r]{Cut}{
				\prftree[r]{R$\wedge$}{
					\prfassumption{\Gamma \Rightarrow \neg\neg A}
				}{
					\prfassumption{\Gamma \Rightarrow \neg\neg B}
				}{\Gamma \Rightarrow \neg\neg A \wedge \neg\neg B}
			}{
				\prftree[r]{L$\wedge$}{
					\prftree[r]{Lp$(1,\neg\neg)$}{
						\prftree[r]{R$(1,\neg\neg)$}{
							\prftree[r]{R$\wedge$}{
								\prftree[r]{Refl$^+$}{A ,  B \Rightarrow A}
							}{
								\prftree[r]{Refl$^+$}{A ,  B \Rightarrow B}
							}{A ,  B \Rightarrow A \wedge B}
						}{A ,  B \Rightarrow \neg\neg(A \wedge B)}
					}{\neg\neg A , \neg\neg B \Rightarrow \neg\neg(A \wedge B)}
				}{\neg\neg A \wedge \neg\neg B \Rightarrow \neg\neg(A \wedge B)}
			}{\Gamma \Rightarrow \neg\neg(A \wedge B)}
			\]
			
			\noindent
			\textbf{The rules \textnormal{R$\top_{(1,\neg\neg)}$}, \textnormal{R$\vee_{1(1,\neg\neg)}$}, and \textnormal{R$\vee_{2(1,\neg\neg)}$}.}	
			Can be handled analogously.
			
			\noindent
			\textbf{The rule \textnormal{L$\to_{(1,\neg\neg)}$}.}	
			Let us consider L$\to_{(1,\neg\neg)}$:
			\[
			\prftree[r]{L$\to_{(1,\neg\neg)}$}
			{\Gamma \Rightarrow \neg\neg A}
			{B, \Gamma \Rightarrow \neg\neg C}
			{A\to B, \Gamma \Rightarrow \neg\neg C}
			\]
			We have
			\[
			\prftree[r]{Cut}{
				\prftree[r]{R$\to$}{
					\prftree[r]{Lp$(1,\neg\neg)$}{
						\prftree[r]{R$(1,\neg\neg)$}{
							\prftree[r]{LE}{
								\prftree[r]{L$\to$}{
									\prftree[r]{Refl$^+$}{A \Rightarrow A}
								}{
									\prftree[r]{Refl$^+$}{ B , A \Rightarrow B}
								}{A\to B, A \Rightarrow B}
							}{A, A\to B \Rightarrow B}
						}{A, A\to B \Rightarrow \neg\neg B}
					}{\neg\neg A, A\to B \Rightarrow \neg\neg B}
				}{A\to B \Rightarrow \neg\neg A \to \neg\neg B}
			}{
				\prftree[r]{L$\to$}{
					\prfassumption{\Gamma \Rightarrow \neg\neg A}
				}{
					\prftree[r]{Lp$(1,\neg\neg)$}{B, \Gamma \Rightarrow \neg\neg C}{\neg\neg B, \Gamma \Rightarrow \neg\neg C}
				}{\neg\neg A \to \neg\neg B, \Gamma \Rightarrow \neg\neg C}
			}{A\to B, \Gamma \Rightarrow \neg\neg C}
			\]
			
			\noindent
			\textbf{The rule \textnormal{L$\bot_{(1,\neg\neg)}$}.}	
			It is admissible in $\Gip$,
			since it is an instance of L$\bot$.
			
			\noindent
			\textbf{The rule \textnormal{R$\to_{(1,\neg\neg)}$}.}
			First, we establish the two auxiliary sequents
			\[
			\neg\neg B , \neg(A\to B) \Rightarrow \bot
			\qquad\text{and}\qquad
			\neg(A\to B) \Rightarrow \neg\neg A.
			\]
			For the first one, we have
			\[
			\prftree[r]{\textnormal{L$\to$}}{
				\prftree[r]{R$\to$}{
					\prftree[r]{LE}{
						\prftree[r]{\textnormal{L$\to$}}{
							\prftree[r]{\textnormal{R$\to$}}{
								\prftree[r]{Refl$^+$}{B, A \Rightarrow B}
							}{ B \Rightarrow A\to B}
						}{
							\prftree[r]{Refl$^+$}{\bot, B \Rightarrow \bot}
						}{\neg(A\to B), B \Rightarrow \bot}
					}{B, \neg(A\to B) \Rightarrow \bot}
				}{\neg(A\to B) \Rightarrow \neg B}
			}{
				\prftree[r]{Refl$^+$}{\bot, \neg(A\to B) \Rightarrow \bot}
			}{\neg\neg B , \neg(A\to B) \Rightarrow \bot}
			\]
			and
			\[
			\prftree[r]{\textnormal{R$\to$}}{
				\prftree[r]{LE}{
					\prftree[r]{L$\to$}{
						\prftree[r]{R$\to$}{
							\prftree[r]{LE}{
								\prftree[r]{L$\to$}{
									\prftree[r]{Refl$^+$}{A \Rightarrow A}
								}{
									\prftree[r]{L$\bot$}{\bot , A \Rightarrow B}
								}{\neg A, A \Rightarrow B}
							}{A, \neg A \Rightarrow B}
						}{\neg A \Rightarrow A \to B}
					}{
						\prftree[r]{Refl$^+$}{\bot , \neg A \Rightarrow \bot}
					}{\neg(A\to B), \neg A \Rightarrow \bot}
				}{\neg A, \neg(A\to B) \Rightarrow \bot}
			}{\neg(A\to B) \Rightarrow \neg\neg A}
			\]
			
			These two derivations allow us to derive
			$\neg\neg(A\to B)$ from
			$\neg\neg A\to\neg\neg B$.
			Indeed, we have
			\[
			\prftree[r]{Cut}{
				\prftree[r]{R$\to$}{
					\prftree[r]{Lp$(1,\neg\neg)$}{A, \Gamma \Rightarrow \neg\neg B}{\neg\neg A, \Gamma \Rightarrow \neg\neg B}
				}{\Gamma \Rightarrow \neg\neg A \to \neg\neg B}
			}{
				\prftree[r]{R$\to$}{
					\prftree[r]{LE}{
						\prftree[r]{\textnormal{L$\to$}}{
							\prftree[r]{}{\vdots}{\neg(A\to B) \Rightarrow \neg\neg A}
						}{
							\prftree[r]{}{\vdots}{\neg\neg B , \neg(A\to B) \Rightarrow \bot}
						}{\neg\neg A \to \neg\neg B , \neg(A\to B) \Rightarrow \bot}
					}{\neg(A\to B), \neg\neg A \to \neg\neg B \Rightarrow \bot}
				}{\neg\neg A \to \neg\neg B \Rightarrow \neg\neg(A\to B)}
			}{\Gamma \Rightarrow \neg\neg(A\to B)}
			\qedhere
			\]
		\end{proof}
		
		\begin{prop}[Glivenko \cite{Glivenko29}]
			\label{prop-glivenko-1}
			$\Gcsp \vdash \Gamma \Rightarrow A$ if and only if $\Gip \vdash \Gamma \Rightarrow \neg\neg A$.
		\end{prop}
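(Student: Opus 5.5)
The plan is to obtain the statement as an instance of Corollary~\ref{cor-glivenko}, with $\L:=\Gip$ and $j:=\neg\neg$ acting pointwise on lists.

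First, check the standing hypotheses of Corollary~\ref{cor-glivenko} for $\Gip$. $\Cut$ is admissible in $\Gip$ by cut elimination~\cite{NegriPlato01}. $\Refl$ is admissible because its generalised form Refl$^+$ is admissible. The remaining hypothesis,
\[(\Gip)_{(1,\neg\neg)}\cup\{\textnormal{Lp$(1,\neg\neg)$},\textnormal{R$(1,\neg\neg)$}\}\preceq\Gip,\]
is exactly Lemma~\ref{lem-glivenko}. Corollary~\ref{cor-glivenko} then yields, for all lists $\Gamma,\Delta$,
\[\Maximum_{\neg\neg}(\Gip)\vdash\Gamma\Rightarrow\Delta \quad\text{iff}\quad \Gip\vdash\Gamma\Rightarrow\neg\neg(\Delta).\]
Taking $\Delta=A$ gives $\Maximum_{\neg\neg}(\Gip)\vdash\Gamma\Rightarrow A$ iff $\Gip\vdash\Gamma\Rightarrow\neg\neg A$.

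It remains to replace $\Maximum_{\neg\neg}(\Gip)$ by $\Gcsp$ on single-succedent sequents. The paper notes $\Maximum_{\neg\neg}(\Gmp)\cong\Gcsp$, so it suffices to show $\Maximum_{\neg\neg}(\Gip)\cong\Maximum_{\neg\neg}(\Gmp)$. This reduces to one point: L$\bot$ is admissible (indeed derivable) in $\Maximum_{\neg\neg}(\Gmp)$. Given the target $\bot,\Gamma\Rightarrow C$, proceed as follows:
\begin{itemize}
\item from Refl we have $\bot\Rightarrow\bot$;
\item R$\to$ gives $\neg C,\bot\Rightarrow\bot$, after weakening $\neg C$ into the antecedent using Refl$^+$ and Cut;
\item R$\to$ then gives $\bot\Rightarrow\neg\neg C$;
\item L$(1,\neg\neg)$, which in the stable extension turns $\neg\neg C$ on the right into $C$ via Cut with $\neg\neg C\Rightarrow C$, yields $\bot\Rightarrow C$;
\item a final Cut/weakening step gives $\bot,\Gamma\Rightarrow C$.
\end{itemize}
Conversely $\Gmp\subseteq\Gip$, so $\Maximum_{\neg\neg}(\Gmp)\subseteq\Maximum_{\neg\neg}(\Gip)$. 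Hence the two extensions are mutually admissible, and by the chain in Remark~\ref{rmk-calculiorders} they derive the same sequents.

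The main obstacle is this identification of calculi, specifically handling weakening and the correct reading of L$(1,\neg\neg)$ (which, for $j=\neg\neg$, trades $\neg\neg$ between sides). Everything else is inherited from Lemma~\ref{lem-glivenko} and Corollary~\ref{cor-glivenko}. If the direct derivation of L$\bot$ proves awkward, a fallback is available: the direction ``$\Gip\vdash\Gamma\Rightarrow\neg\neg A$ implies $\Gcsp\vdash\Gamma\Rightarrow A$'' is immediate, since $\Gip$'s rules hold classically and L$\neg\neg$ strips the double negation. The converse direction then follows from soundness in Theorem~\ref{thm-main}, because $\Gcsp\cong\Maximum_{\neg\neg}(\Gmp)\le\Maximum_{\neg\neg}(\Gip)$.
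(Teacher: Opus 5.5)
Your proposal is correct and follows the paper's own route: instantiate Corollary~\ref{cor-glivenko} with $\L=\Gip$ and $j=\neg\neg$, and discharge its hypothesis with Lemma~\ref{lem-glivenko}. Your extra derivation of L$\bot$ in $\Maximum_{\neg\neg}(\Gmp)$ makes explicit the identification of $\Maximum_{\neg\neg}(\Gip)$ with $\Gcsp$, which the paper uses without comment; it derives $\bot\Rightarrow\neg\neg C$ and cuts it against $\neg\neg C\Rightarrow C$, obtained from L$(1,\neg\neg)$ (small slip: $\neg C,\bot\Rightarrow\bot$ comes directly from Refl$^+$, with R$\to$ applied only afterwards).
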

		\begin{proof}
			Direct application of Corollary~\ref{cor-glivenko} given Lemma~\ref{lem-glivenko}.
		\end{proof}
		
		\subsection{The Kuroda--Segerberg theorem}
		
		We now prove a version that simultaneously encompasses both Kuroda's theorem \cite{kuroda1947sagaku,kuroda1948aristotle,Kuroda51} and Segerberg's claim \cite{Segerberg68} (see also Subsection \ref{sec-Kuroda}), starting from minimal predicate logic.
		
		\begin{lem}\
			\label{lem-kuroda-segerberg}
			$\Gm_{(1,\neg\neg)}\cup\{\textnormal{Lp$(1,\neg\neg)$},\textnormal{R${(1,\neg\neg)}$}\} \preceq \Gg$.
		\end{lem}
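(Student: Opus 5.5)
We follow the same strategy as the proof of Lemma~\ref{lem-glivenko}, now working in $\Gg$ rather than $\Gip$ and checking each rule of $\Gm_{(1,\neg\neg)}$ together with Lp$(1,\neg\neg)$ and R$(1,\neg\neg)$. Cut is primitive in $\Gg$, so we may use it freely, and Refl$^+$ is admissible since $\Gg$ extends $\Gm$.

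\textbf{Nucleus rules and easy cases.} Lp$(1,\neg\neg)$ and R$(1,\neg\neg)$ hold already in $\Gm\subseteq\Gg$, because $\neg\neg$ is a nucleus in $\Gm$.
\begin{itemize}
\item IS$_{(1,\neg\neg)}$ follows from IS and R$(1,\neg\neg)$.
\item LE$_{(1,\neg\neg)}$, L$\wedge_{(1,\neg\neg)}$, L$\vee_{(1,\neg\neg)}$ and L$\exists_{(1,\neg\neg)}$ are literally instances of LE, L$\wedge$, L$\vee$ and L$\exists$, since the left-hand side is untouched.
\item R$\wedge_{(1,\neg\neg)}$, R$\top_{(1,\neg\neg)}$, R$\vee_{i(1,\neg\neg)}$ and L$\to_{(1,\neg\neg)}$ are handled by exactly the minimal derivations of Lemma~\ref{lem-glivenko}. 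Those derivations never used L$\bot$; they rely only on R$(1,\neg\neg)$, Lp$(1,\neg\neg)$, Cut and the $\Gm$ rules.
\item R$\exists_{(1,\neg\neg)}$ goes the same way. From $\Gamma\Rightarrow\neg\neg A[t/x]$ we cut against $\neg\neg A[t/x]\Rightarrow\neg\neg\exists x\,A$. That sequent is obtained from $A[t/x]\Rightarrow\exists x\,A$ by R$(1,\neg\neg)$ and then Lp$(1,\neg\neg)$.
\item L$\forall_{(1,\neg\neg)}$ is an instance of L$\forall$.
\end{itemize}

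\textbf{The two substantive rules.} The only rules that genuinely need the extra strength of $\Gg$ are R$\to_{(1,\neg\neg)}$ and R$\forall_{(1,\neg\neg)}$. In $\Gip$ the former required L$\bot$, and the latter is Kuroda's double negation shift.
\begin{itemize}
\item R$\forall_{(1,\neg\neg)}$ is verbatim the added $\Gg$ rule R$\forall$ with conclusion $\Gamma\Rightarrow\neg\neg\forall x\,A$. It is therefore admissible trivially.
\item R$\to_{(1,\neg\neg)}$ is verbatim the $\Gg$ rule R$\to_{(1,\neg\neg)}$, whose premiss is $A,\Gamma\Rightarrow\neg\neg B$. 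It is likewise immediate.
\end{itemize}

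\textbf{Expected obstacle.} The main point to verify is that none of the ``analogous'' cases borrowed from Lemma~\ref{lem-glivenko} secretly uses L$\bot$. Only the R$\to$ case did, through the auxiliary sequent $\neg(A\to B)\Rightarrow\neg\neg A$. That is exactly why $\Gg$ builds this case in as a primitive rule, mirroring Segerberg's $\neg\neg(\bot\to A)$. With this checked, every translated rule is admissible in $\Gg$, which proves the lemma.
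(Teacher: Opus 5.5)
Your proposal is correct and follows essentially the same route as the paper. It reuses the cases of Lemma~\ref{lem-glivenko}, and you rightly check that these avoid L$\bot$. It treats L$\exists_{(1,\neg\neg)}$ and L$\forall_{(1,\neg\neg)}$ as instances of L$\exists$ and L$\forall$, and handles R$\exists_{(1,\neg\neg)}$ by a cut as for R$\wedge_{(1,\neg\neg)}$. It observes that R$\to_{(1,\neg\neg)}$ and R$\forall_{(1,\neg\neg)}$ are primitive rules of $\Gg$.

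One small point: admissibility in $\Gm$ does not in general carry over to the extension $\Gg$ (cf.\ Remark~\ref{rmk-calculiorders}). So ``hold already in $\Gm\subseteq\Gg$'' is not by itself a valid justification for Lp$(1,\neg\neg)$ and R$(1,\neg\neg)$. They are better justified as derivable in $\Gg$, by cutting with the primitive Cut against $\Gm$-derivable sequents such as $A\Rightarrow\neg\neg A$ and $\neg\neg A, A\to\neg\neg C\Rightarrow\neg\neg C$.
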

		
		\begin{proof}
			We observe that the rules Lp$(1,\neg\neg)$, R$(1,\neg\neg)$, IS$_{(1,\neg\neg)}$, LE$_{(1,\neg\neg)}$, L$\wedge_{(1,\neg\neg)}$, L$\vee_{(1,\neg\neg)}$, R$\wedge_{(1,\neg\neg)}$, R$\top_{(1,\neg\neg)}$, R$\vee_{1(1,\neg\neg)}$, R$\vee_{2(1,\neg\neg)}$, L$\to_{(1,\neg\neg)}$ are handled as in Lemma \ref{lem-glivenko}.
			The rules L$\exists_{(1,\neg\neg)}$ and L$\forall_{(1,\neg\neg)}$ are instances of L$\exists$ and L$\forall$, respectively.
			The rule R$\exists_{(1,\neg\neg)}$ can be handled similarly as R$\wedge_{(1,\neg\neg)}$.
			Since $\Gg$ extends $\Gm$ with rules R$\to_{(1,\neg\neg)}$, R$\forall_{(1,\neg\neg)}$ and \Cut, we conclude that $\Gm_{(1,\neg\neg)}\cup\{\textnormal{Lp$(1,\neg\neg)$},\textnormal{R${(1,\neg\neg)}$}\} \preceq \Gg$.
		\end{proof}
		
		\begin{prop}
			\label{prop-Kuroda-Segerberg}
			Let $\K$ be a calculus such that
			$\Gm\preceq\K\preceq\Gcs$.
			Then:
			\begin{enumerate}
				\item If $\K\vdash \Gamma \Rightarrow \neg\neg A$, then $\Gcs\vdash \Gamma \Rightarrow A$.
				\item The following are equivalent:
				\begin{enumerate}
					\item If $\Gcs\vdash \Gamma \Rightarrow A$, then $\K\vdash \Gamma \Rightarrow \neg\neg A$.
					\item $\Gg\preceq\K$.
				\end{enumerate}
			\end{enumerate}
		\end{prop}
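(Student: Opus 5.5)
Both parts follow from Theorem~\ref{thm-main}, or rather its specialisation Theorem~\ref{thm-kuroda}, instantiated with $\L=\Gm$ and $j=\neg\neg$. We take $\Maximum_{\neg\neg}(\Gm)\cong\Gcs$ as the classical calculus. Write $\G:=\Gm\cup\Gm_{(1,\neg\neg)}\cup\{\Refl,\Cut,\textnormal{Lp}(1,\neg\neg),\textnormal{R}(1,\neg\neg)\}$ for the calculus produced by Theorem~\ref{thm-kuroda}. A preliminary step is to record that $\G\cong\Gg$. On one side, $\G\preceq\Gg$ holds by Lemma~\ref{lem-kuroda-segerberg}, together with the facts that $\Gm\subseteq\Gg$ and that Refl (via Refl$^+$) and Cut are admissible in $\Gg$. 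On the other side, $\Gg\preceq\G$ holds because the extra rules of $\Gg$, namely R$\to_{(1,\neg\neg)}$, R$\forall_{(1,\neg\neg)}$ and Cut, are literally members of $\Gm_{(1,\neg\neg)}\cup\{\Cut\}$.

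\textbf{Item 1 (faithfulness).} Suppose $\K\vdash\Gamma\Rightarrow\neg\neg A$. Since $\K\preceq\Gcs$, we get $\Gcs\vdash\Gamma\Rightarrow\neg\neg A$. The rule L$(1,\neg\neg)$ is admissible in $\Gcs$: it is essentially L$\neg\neg$. Together with Refl$^+$ and Cut this gives $\neg\neg A\Rightarrow A$ in $\Gcs$, and a single Cut yields $\Gcs\vdash\Gamma\Rightarrow A$. Equivalently, we may cite Theorem~\ref{thm-main}(\ref{item-main-faithful}) with $\K\preceq\M=\Gcs$, using Remark~\ref{rmk-jstable} for the soundness and faithfulness of $(1,\neg\neg)\colon\Gcs\to\Gcs$. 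For that, $\neg\neg$ must be expansive and stable in $\Gcs$; this is clear from R$(1,\neg\neg)$ and L$\neg\neg$/Cut.

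\textbf{Item 2, (b)$\Rightarrow$(a).} Assume $\Gg\preceq\K$, so $\G\preceq\K$ by the preliminary step. Theorem~\ref{thm-main}(\ref{item-main-sound}) (soundness into any $\K$ with $\G\preceq\K$) gives: $\Gcs\vdash\Gamma\Rightarrow A$ implies $\K\vdash\Gamma\Rightarrow\neg\neg A$. One caveat is that soundness is stated for $\Maximum_{\neg\neg}(\Gm)$ rather than $\Gcs$. Because $\Gcs\cong\Maximum_{\neg\neg}(\Gm)$ and $\cong$ preserves derivability ($\preceq$ implies $\leq$), this transfer is immediate.

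\textbf{Item 2, (a)$\Rightarrow$(b).} Assume (a). By item 1, $(1,\neg\neg)\colon\Gcs\to\K$ is then both sound and faithful. We also have $\Gm\preceq\K$ by hypothesis. The minimality clause of Theorem~\ref{thm-kuroda} (i.e.\ Theorem~\ref{thm-main}(\ref{item-main-min})) therefore yields $\G\preceq\K$, and hence $\Gg\preceq\K$ by the preliminary step and transitivity of $\preceq$.

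The main obstacle I expect is bookkeeping in the preliminary identification $\G\cong\Gg$, together with the passage between $\Gcs$ and $\Maximum_{\neg\neg}(\Gm)$. Two points need care. First, the premisses of Theorem~\ref{thm-kuroda}, namely admissibility of Cut and Refl in $\Gm$, must hold; they do, by cut elimination for $\Gm$ and the admissibility of Refl$^+$. Second, one must check that $\cong$ is compatible with the notions of sound and faithful translation and with the $\preceq$-hypotheses. I would handle the second point with a short remark: $\preceq$ between calculi implies $\leq$, so both translation properties transfer along $\cong$ on either side.
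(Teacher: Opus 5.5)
Your route is the paper's own: Theorem~\ref{thm-kuroda} together with Lemma~\ref{lem-kuroda-segerberg}. Item~1 and (b)$\Rightarrow$(a) are fine. In (b)$\Rightarrow$(a) you only need $\G\leq\K$, which follows from $\G\preceq\Gg\preceq\K$ because $\leq$ is transitive.

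The genuine gap is in (a)$\Rightarrow$(b). The minimality result you actually have is Theorem~\ref{thm-main}(\ref{item-main-min}) with $\M=\Maximum_{\neg\neg}(\Gm)$. It yields $\G'\preceq\K$ for $\G':=\Gm\cup(\Maximum_{\neg\neg}(\Gm))_{(1,\neg\neg)}$. This calculus contains $\Cut_{(1,\neg\neg)}$ but not plain $\Cut$, whereas plain $\Cut$ is a primitive rule of $\Gg$ and of the $\G$ of Theorem~\ref{thm-kuroda}. To pass from $\G'\preceq\K$ to $\G\preceq\K$ or $\Gg\preceq\K$ along $\cong$, you use transitivity of $\preceq$. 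That fails: Remark~\ref{rmk-calculiorders} itself exhibits $\L_2\preceq\L_0\preceq\L_3$ with $\L_2\not\preceq\L_3$. Your closing remark, that everything transfers along $\cong$ because $\preceq$ implies $\leq$, is right for soundness and faithfulness. It is not right for conclusions of the form ``$\ldots\preceq\K$''. The paper's one-line proof goes through the minimality clause of Theorem~\ref{thm-kuroda}, which is obtained by exactly this step. So you have inherited the problem rather than introduced it.

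In fact the gap cannot be closed, because (a)$\Rightarrow$(b) is false as stated. Let $\K$ consist of $\Gm$ together with, as zero-premiss rules, all sequents $\Gamma\Rightarrow\neg\neg A$ such that $\Gcs\vdash\Gamma\Rightarrow A$. Then $\Gm\preceq\K\preceq\Gcs$, using $A\Rightarrow\neg\neg A$ and $\Cut$ in $\Gcs$, and (a) holds trivially. Fix an atom $P\neq\bot$ and put $N:=\neg(\neg\neg P\to P)$.
\begin{itemize}
\item $\K\vdash{}\Rightarrow\neg\neg(\neg\neg P\to P)$, since this is one of the added axioms.
\item $\Gm\vdash\neg\neg(\neg\neg P\to P),N\Rightarrow\bot$, by one L$\to$ step.
\item But $\K\nvdash N\Rightarrow\bot$.
\end{itemize}
To see the last point, note that no added axiom has succedent $\bot$, an atom, or $\neg\neg P\to P$. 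Hence a shortest derivation of $N\Rightarrow\bot$ would have to contain a sequent whose antecedent consists of copies of $\bot$, $\neg\neg P$, $N$ and whose succedent is $P$. No such sequent is derivable: IS does not apply, and every L$\to$ step on $\neg\neg P$ or $N$ has a right premiss of the same shape. Therefore $\Cut$ is not admissible in $\K$, and so $\Gg\not\preceq\K$.

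The statement can be repaired in either of two ways:
\begin{itemize}
\item Assume in addition that $\Cut$ is admissible in $\K$. The other rules of $\Gg$ lie in $\Gm\cup\Gm_{(1,\neg\neg)}\subseteq\G'$, so $\G'\preceq\K$ then suffices.
\item Replace $\Gg$ in (b) by $\G'$.
\end{itemize}
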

		\begin{proof}
			Direct application of Theorem~\ref{thm-kuroda} given Lemma~\ref{lem-kuroda-segerberg}.
		\end{proof}
		
		\section{Negative translations}
		
		We now turn to a generalised setting encompassing negative translations of the kind introduced by Kolmogorov, Gödel, Gentzen, and Kuroda, among others. By specialising our framework to the case in which
		$k := \gleft = \gright$
		and imposing a suitable collection of conditions on $k$, we obtain a particular case of Theorem~\ref{thm-main}. 
		
		\begin{defi}
			Let $\L$ and $\M$ be calculi on $S$ such that $\L\preceq\M$, and
			let $j,k\colon S^*\to S^*$.
			We say that $k$ is a \emph{proto-$j$-translation} from \M\ to \L\ if
			the function $k$ is a sound and faithful translation from $\M$ to $\M$ and
			the map $(k,kj)$ is both expansive and stable in \L.
		\end{defi}
		
		\begin{thm}
			\footnote{
				This result extends the scope of the
				main theorem of~\cite{Fellin26}, providing a more general and widely
				applicable account of generalised negative translations. It also
				generalises~\cite[Theorem 3.13]{FellinSchuster25}.
			}
			\label{thm-kolmogorov}
			Let $\L$ be a calculus on $S$ such that \Refl\ and \Cut\ are admissible in \L.
			Let $j,k\colon S^*\to S^*$ such that $k$ is a proto-$j$-translation from $\Maximum_j(\L)$ to \L, and $\L_{(k,k)} \preceq \L$.
			Then $k$ is a sound and faithful translation from $\Maximum_j(\L)$ to $\L$.
		\end{thm}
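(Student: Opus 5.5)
The plan is to obtain the statement as a direct instance of Theorem~\ref{thm-main}. I take $\M:=\Maximum_j(\L)$, keep $\L$ as the target calculus, and use $(\gleft,\gright):=(k,k)$. By the definition of proto-$j$-translation, $k$ is sound and faithful from $\Maximum_j(\L)$ to itself, which is exactly the hypothesis on $(\gleft,\gright)\colon\M\to\M$ in Theorem~\ref{thm-main}. The two required conclusions then come from the two items of that theorem, as follows.

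\emph{Faithfulness.} This follows from Theorem~\ref{thm-main}(\ref{item-main-faithful}) with $\K=\L$. The only requirement is $\L\preceq\Maximum_j(\L)$, which holds because $\L\subseteq\Maximum_j(\L)$ and $\subseteq$ implies $\preceq$ (Remark~\ref{rmk-calculiorders}).

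\emph{Soundness.} By Theorem~\ref{thm-main}(\ref{item-main-sound}) it suffices to show $\G\preceq\L$, where $\G=\L\cup(\Maximum_j(\L))_{(k,k)}$. Trivially $\L\preceq\L$, so I must check that $r_{(k,k)}$ is admissible in $\L$ for every $r\in\Maximum_j(\L)=\L\cup\{\Refl,\Cut,\textnormal{L}(1,j),\textnormal{R}(1,j)\}$. I split by rule.
\begin{enumerate}
\item For $r\in\L$, admissibility is the hypothesis $\L_{(k,k)}\preceq\L$.
\item $\textnormal{R}(1,j)_{(k,k)}$ has premiss $k(\Gamma)\Rightarrow k(\Delta_1),k(\Phi),k(\Delta_2)$ and conclusion $k(\Gamma)\Rightarrow k(\Delta_1),k(j(\Phi)),k(\Delta_2)$. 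Since $k(j(\Phi))=kj(\Phi)$, this is an instance of $\textnormal{R}(k,kj)$, which is admissible in $\L$ because $(k,kj)$ is expansive.
\item Symmetrically, $\textnormal{L}(1,j)_{(k,k)}$ passes from $k(\Gamma_1),kj(\Phi),k(\Gamma_2)\Rightarrow k(\Delta)$ to $k(\Gamma_1),k(\Phi),k(\Gamma_2)\Rightarrow k(\Delta)$. This is an instance of $\textnormal{L}(kj,k)$, again admissible by expansiveness of $(k,kj)$.
\item $\Refl_{(k,k)}$ and $\Cut_{(k,k)}$ should reduce to $\Refl$ and $\Cut$ in $\L$, which are admissible by hypothesis.
\end{enumerate}

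The main obstacle is the last item, $\Refl_{(k,k)}$ and $\Cut_{(k,k)}$. The difficulty is that $k$ acts on lists. The reduction is immediate if $k$ acts pointwise, sending each singleton $[A]$ to a singleton $[k(A)]$ and commuting with concatenation:
\begin{enumerate}
\item $\Refl_{(k,k)}$ becomes $k(A)\Rightarrow k(A)$, an instance of $\Refl$.
\item $\Cut_{(k,k)}$ becomes a cut on $k(A)$ with contexts $k(\Gamma),k(\Delta_1),k(\Delta_2)$, an instance of $\Cut$.
\end{enumerate}
So I would first argue, or make explicit, that $k$ is pointwise, as all the intended examples are by the list extension of Remark~\ref{rmk-multisingle}. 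If $k$ is not pointwise, I would reconstruct the translated cut and reflexivity from $\Refl$ and $\Cut$ in $\L$, using the stability of $(k,kj)$ to move between $k(\Phi)$ and $kj(\Phi)$ on either side. Stability gives the inverse rules needed for this. With these cases settled, $\G\preceq\L$ holds, and soundness follows.
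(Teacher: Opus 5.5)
Your overall route is the paper's: apply Theorem~\ref{thm-main} with $\M=\Maximum_j(\L)$ and $(\gleft,\gright)=(k,k)$. Faithfulness comes from $\L\subseteq\Maximum_j(\L)$, and soundness reduces to the admissibility in $\L$ of $\Refl_{(k,k)}$, $\Cut_{(k,k)}$, L$(1,j)_{(k,k)}$ and R$(1,j)_{(k,k)}$. Your item 2 is correct, but your item 3 treats the wrong rule.

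In the paper's naming, the first argument is the premiss side and the second is the conclusion side: R$(\gleft,\gright)$ passes from $\gleft(\Phi)$ to $\gright(\Phi)$, and L$(\gright,\gleft)$ from $\gright(\Phi)$ to $\gleft(\Phi)$. So L$(1,j)$ passes from $\Gamma_1,\Phi,\Gamma_2\Rightarrow\Delta$ to $\Gamma_1,j(\Phi),\Gamma_2\Rightarrow\Delta$. This is the stability rule; for $j=\neg\neg$ it is L$\neg\neg$, which is why $\Maximum_{\neg\neg}(\Gm)\cong\Gcs$. The rule you checked, from $j(\Phi)$ to $\Phi$ on the left, is L$(j,1)$. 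That rule already follows from R$(1,j)$, $\Refl$ and $\Cut$ and carries none of the classical content.

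The correct translated rule passes from $k(\Gamma_1),k(\Phi),k(\Gamma_2)\Rightarrow k(\Delta)$ to $k(\Gamma_1),kj(\Phi),k(\Gamma_2)\Rightarrow k(\Delta)$. That is, it is L$(k,kj)$, the left rule witnessing that $(kj,k)$ is expansive, i.e.\ that $(k,kj)$ is \emph{stable}. This is exactly how the paper argues: L$(1,j)_{(k,k)}$ comes from stability and R$(1,j)_{(k,k)}$ from expansiveness. A warning sign in your version is that the stability half of the proto-$j$-translation hypothesis is never used in your main line. With this correction the argument goes through.

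On $\Refl_{(k,k)}$ and $\Cut_{(k,k)}$, the paper simply asserts that they are instances of $\Refl$ and $\Cut$. This tacitly presupposes that $k$ acts pointwise on lists, as it does in all the examples, so flagging it explicitly is more careful than the paper. Note that your items 2 and 3 also silently use that $k$ commutes with concatenation, e.g.\ $k(\Delta_1,j(\Phi),\Delta_2)=k(\Delta_1),kj(\Phi),k(\Delta_2)$. So the assumption is needed throughout, not only for reflexivity and cut.

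Your fallback for non-pointwise $k$ is too vague to count as an argument. Stability of $(k,kj)$ only trades $k(\Phi)$ for $kj(\Phi)$. It says nothing about obtaining $k(A)\Rightarrow k(A)$, or a cut on a non-singleton $k(A)$, from single-formula $\Refl$ and $\Cut$. You should instead state pointwiseness as a standing assumption.
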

		
		\begin{proof}
			Notice that $\Refl_{(k,k)}$ and $\Cut_{(k,k)}$ are instances of \Refl\ and \Cut, respectively, and that L$(1,j)_{(k,k)}$ and R$(1,j)_{(k,k)}$ are direct consequences of $(k,kj)$ being stable and expansive in \L, respectively.
			Therefore $(\Maximum_j(\L))_{(k,k)}\preceq\L$, and the claim follows from Theorem \ref{thm-main}.
		\end{proof}
		
		\subsection{Glivenko's theorem, rephrased}
		
		Similarly to what done in \cite{Fellin26}, Corollary \ref{cor-glivenko} may also be obtained from Theorem \ref{thm-kolmogorov} by setting $k = j$ and carrying out further appropriate observations. In fact, we can derive the following variant of Glivenko's theorem (Proposition \ref{prop-glivenko-1}).
		
		\begin{prop}[Glivenko \cite{Glivenko29}]
			\label{prop-glivenko-2}
			The function $\neg\neg$ is a sound and faithful translation from $\Gcsp$ to $\Gip$.
		\end{prop}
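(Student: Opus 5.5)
We apply Theorem~\ref{thm-kolmogorov} with $\L:=\Gip$ and $j=k:=\neg\neg$, the latter extended pointwise to lists as in the previous subsection. This fits the setting of the theorem because $\Maximum_{\neg\neg}(\Gip)$ and $\Gcsp$ prove the same sequents. On one side, $\Maximum_{\neg\neg}(\Gmp)\cong\Gcsp$ and $\Gmp\subseteq\Gip$. On the other, L$\bot$ is derivable in $\Gcsp$: from $\bot\Rightarrow\bot$ we get $\bot,\neg C\Rightarrow\bot$ by weakening, then $\bot\Rightarrow\neg\neg C$ by R$\to$, and finally L$\neg\neg$ together with Cut. Thus $\Maximum_{\neg\neg}(\Gip)\cong\Gcsp$, and soundness and faithfulness transfer along $\cong$ because $\cong$ implies equal derivability. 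Refl and Cut are admissible in $\Gip$ \cite{NegriPlato01}. It therefore remains to check two hypotheses: that $\neg\neg$ is a proto-$\neg\neg$-translation from $\Gcsp$ to $\Gip$, and that $(\Gip)_{(\neg\neg,\neg\neg)}\preceq\Gip$.

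\textbf{Proto-translation conditions.} First, $\neg\neg$ must be a sound and faithful translation from $\Gcsp$ to itself. In $\Gcsp$ every formula is equivalent to its double negation: $A\Rightarrow\neg\neg A$ holds already minimally, and $\neg\neg A\Rightarrow A$ follows by L$\neg\neg$. So $\neg\neg$ is both expansive and stable in $\Gcsp$, using Cut, and Remark~\ref{rmk-jstable} yields the claim.

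Second, $(\neg\neg,\neg\neg\neg\neg)$ must be expansive and stable in $\Gip$. Intuitionistically $\neg\neg\neg\neg A\leftrightarrow\neg\neg A$, since $\neg\neg\neg B\leftrightarrow\neg B$. Hence all four rules R$(\neg\neg,\neg\neg\neg\neg)$, L$(\neg\neg\neg\neg,\neg\neg)$ and their swapped versions are admissible via Cut with these derivable equivalences, applied pointwise to each formula of $\Phi$.

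\textbf{The condition $(\Gip)_{(\neg\neg,\neg\neg)}\preceq\Gip$.} This is the main work and is carried out rule by rule, always assuming premisses whose antecedent and succedent formulae are all double-negated.
\begin{itemize}
\item \textbf{IS and LE.} These translate to instances of Refl$^+$ and LE.
\item \textbf{L$\bot$.} The translated rule concludes $\neg\neg\bot,\neg\neg\Gamma\Rightarrow\neg\neg C$. Since $\neg\neg\bot\Rightarrow\bot$ is derivable, this follows by Cut and L$\bot$.
\item \textbf{R$\top$.} This follows by R$(1,\neg\neg)$.
\item \textbf{Right rules R$\wedge$, R$\vee_i$, R$\to$.} These are handled exactly as in Lemma~\ref{lem-glivenko}, with $\neg\neg\Gamma$ as the context. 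For R$\to$ one first uses Lp$(1,\neg\neg)$, as done there, to replace $\neg\neg A$ by $A$ where needed.
\item \textbf{Left rules.} Here the principal formula is $\neg\neg(A\ast B)$. The key observation is that $\neg\neg C$ is stable in $\Gip$ whenever the succedent has the form $\neg\neg C$. Concretely, the rule
\[\prftree{D,\Delta\Rightarrow\neg\neg C}{\neg\neg D,\Delta\Rightarrow\neg\neg C}\]
is admissible, which is precisely Lp$(1,\neg\neg)$, available since $\neg\neg$ is a nucleus.
\begin{itemize}
\item \textbf{L$\wedge$.} From the premiss $\neg\neg A,\neg\neg B,\neg\neg\Gamma\Rightarrow\neg\neg C$, Cut with $A\Rightarrow\neg\neg A$ and $B\Rightarrow\neg\neg B$ gives $A,B,\neg\neg\Gamma\Rightarrow\neg\neg C$. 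Then L$\wedge$ and Lp$(1,\neg\neg)$ give the conclusion.
\item \textbf{L$\vee$.} This is treated the same way.
\item \textbf{L$\to$.} Reduce $\neg\neg(A\to B)$ to $A\to B$ by Lp$(1,\neg\neg)$. Then argue as in the L$\to_{(1,\neg\neg)}$ case of Lemma~\ref{lem-glivenko}, cutting with $A\to B\Rightarrow\neg\neg A\to\neg\neg B$. The premiss $\Gamma\Rightarrow\neg\neg A$ is available by hypothesis, and the premiss with $\neg\neg B$ on the left is exactly the translated one.
\end{itemize}
\end{itemize}

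I expect the main obstacle to be bookkeeping rather than a new idea. The one delicate point is making sure Lp$(1,\neg\neg)$ is applicable in the left rules. This holds because every succedent involved is of the form $\neg\neg C$, as $k=\neg\neg$ is applied to the single succedent formula. Once all rules are checked, Theorem~\ref{thm-kolmogorov} gives the proposition.
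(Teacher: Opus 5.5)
Your proposal is correct and follows the paper's proof. Like the paper, you apply Theorem~\ref{thm-kolmogorov} with $\L=\Gip$ and $j=k=\neg\neg$, check the proto-translation conditions via $\neg\neg\neg\neg A\leftrightarrow\neg\neg A$, and obtain $(\Gip)_{(\neg\neg,\neg\neg)}\preceq\Gip$ from Lemma~\ref{lem-glivenko} and the nucleus rules, which you spell out rule by rule where the paper compresses this into one line; one small slip is that Lp$(1,\neg\neg)$ replaces $A$ by $\neg\neg A$ on the left, not the reverse, but in the R$\to$ case the translated premiss already has $\neg\neg A$ in the antecedent, so you can apply R$\to$ directly and cut with $\neg\neg A\to\neg\neg B\Rightarrow\neg\neg(A\to B)$.
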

		
		\begin{proof}
			Clearly, $\neg\neg$ is stable and expansive in~$\Gcsp \cong \Maximum_{\neg\neg}(\Gip)$. We also observe that $\Gip\vdash\neg\neg\neg\neg A\Rightarrow\neg\neg A$ and $\Gip\vdash\neg\neg A\Rightarrow\neg\neg\neg\neg A$ are direct consequences of the fact that $\neg\neg$ is a nucleus. Therefore $\neg\neg$ is a proto-$\neg\neg$-translation from $\Gcsp$ to $\Gip$. Moreover, by Lemma \ref{lem-glivenko} and the rules of a nucleus we get $\Gip_{(\neg\neg,\neg\neg)}\preceq\Gip$. We conclude by Theorem~\ref{thm-kolmogorov}.
		\end{proof}
		
		\subsection{Kolmogorov's negative translation theorem}
		
		Let us consider maps from $\Gcs$ to~$\Gm$.
		We inductively define the function $K$, corresponding to Kolmogorov's negative translation:
		\[
		\begin{array}{rclcrcl}
			K(P) &:=& \neg\neg P, 
			\\[1mm]
			K(A * B) &:=& \neg\neg (K(A) * K(B)) &\text{if } * \in \{\wedge, \vee, \to\}, 
			\\[1mm]
			K(\mathsf{Q} x\, A) &:=& \neg\neg \mathsf{Q} x\, K(A) &\text{if } \mathsf{Q} \in \{\forall, \exists\},
			\\[1mm]
			K(\epsilon) &:=& \epsilon,
			\\[1mm]
			K(C_1,\ldots,C_n) &:=& K(C_1), \ldots, K(C_n).
		\end{array}
		\]
		We observe that, for every formula $A$, and every term $t$,
		\(
		K(A[t/x])=(K(A))[t/x].
		\)
		In particular, the eigenvariable conditions of the quantifier rules are
		preserved under the transformation $K$.
		
		\begin{lem}
			\label{rmk-Kolmogorov-Gentzen}
			$K$ is a proto-$\neg\neg$-translation from $\Gcs$ to~$\Gm$.
		\end{lem}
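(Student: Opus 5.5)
The plan is to unfold the definition of a proto-$\neg\neg$-translation into its two requirements and check each one separately. The first requirement is that $K$ is a sound and faithful translation from $\Gcs$ to $\Gcs$. The second is that the map $(K,K\neg\neg)$ is expansive and stable in $\Gm$. Here $K\neg\neg$ is the composite that sends a list $C_1,\ldots,C_n$ to $K(\neg\neg C_1),\ldots,K(\neg\neg C_n)$. Both requirements will reduce to establishing certain sequents $A\Rightarrow B$ and $B\Rightarrow A$ and then using \Cut. \Cut\ is primitive in $\Gcs$ and admissible in $\Gm$ by cut elimination. \textnormal{Refl}$^+$ is admissible in both calculi.

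\textbf{Step 1: soundness and faithfulness in $\Gcs$.} I will show by induction on $A$ that $\Gcs\vdash A\Rightarrow K(A)$ and $\Gcs\vdash K(A)\Rightarrow A$.
\begin{itemize}
\item In the atomic case, $P\Rightarrow\neg\neg P$ is derivable already in $\Gm$, and $\neg\neg P\Rightarrow P$ comes from L$\neg\neg$ applied to \textnormal{Refl}$^+$. The case of $\bot$, where $K(\bot)=\neg\neg\bot$, is handled the same way.
\item In the compound cases, write $K(A*B)=\neg\neg(K(A)*K(B))$. I first obtain $K(A)*K(B)\Leftrightarrow A*B$ from the induction hypotheses. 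This uses the left and right rules for $*$ together with \Cut; for $\to$ the left and right components are used contravariantly and covariantly respectively.
\item I then add or remove the outer $\neg\neg$ using R$(1,\neg\neg)$ (admissible in $\Gm$) and L$\neg\neg$.
\item Quantifiers are treated the same way. This relies on $K(A[t/x])=K(A)[t/x]$, so that the eigenvariable conditions are preserved.
\end{itemize}
With these equivalences in hand, $\Gcs\vdash\Gamma\Rightarrow\Delta$ holds if and only if $\Gcs\vdash K(\Gamma)\Rightarrow K(\Delta)$. The proof is by cutting formula by formula against the equivalences, using LE to move each antecedent formula into position. Since $\Gcs$ is single-succedent, $\Delta$ contains at most one formula, so the succedent needs only a single cut.

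\textbf{Step 2: expansivity and stability in $\Gm$.} The rules R$(K,K\neg\neg)$, L$(K\neg\neg,K)$ and their inverses, which are required by stability, all follow from \Cut\ in $\Gm$ once I have the following, for every formula $A$:
\[
\Gm\vdash K(A)\Rightarrow K(\neg\neg A)
\qquad\text{and}\qquad
\Gm\vdash K(\neg\neg A)\Rightarrow K(A).
\]
Because the succedent in $\Gm$ has at most one formula, $\Delta_1,\Delta_2$ are empty and the list bookkeeping is trivial. To compute these sequents, I unfold
\[
K(\neg\neg A)=\neg\neg\bigl(\neg\neg(K(A)\to K(\bot))\to K(\bot)\bigr),
\qquad K(\bot)=\neg\neg\bot.
\]
I then use two auxiliary facts in $\Gm$.
\begin{enumerate}
\item $\neg\neg\bot\Leftrightarrow\bot$. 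The direction $\neg\neg\bot\Rightarrow\bot$ is obtained by L$\to$ from $\Rightarrow\neg\bot$ and $\bot\Rightarrow\bot$.
\item Every $K(A)$ has the form $\neg\neg D$, so $\neg\neg K(A)\Rightarrow K(A)$ holds by the usual triple-negation reduction $\neg\neg\neg E\Rightarrow\neg E$.
\end{enumerate}
Using the nucleus rules Lp$(1,\neg\neg)$ and R$(1,\neg\neg)$ together with fact 1, I rewrite $K(\neg\neg A)$ step by step into $\neg\neg\neg\neg K(A)$, replacing each occurrence of $K(\bot)$ by $\bot$ through fact 1. Applying fact 2 twice then brings this back to $K(A)$. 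Both directions are needed, and each rewriting step is a short derivation in $\Gm$.

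I expect the main obstacle to be this $\Gm$ computation. The difficulty is that $\Gm$ lacks L$\bot$, so every step must use only minimal reasoning. In particular, the replacement of $\neg\neg\bot$ by $\bot$ must be justified inside implications in both polarities. I would handle it with a general congruence lemma: if $\Gm$ proves $E\Leftrightarrow E'$, then it proves $(X\to E)\Leftrightarrow(X\to E')$ and $(E\to X)\Leftrightarrow(E'\to X)$. I would then apply that lemma repeatedly.
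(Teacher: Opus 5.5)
Your proposal is correct and uses the same decomposition as the paper. The first condition comes from $K$ being expansive and stable in $\Gcs$, which your formula-wise equivalences $A\Leftrightarrow K(A)$ plus \Cut\ deliver; the second comes from the sequents $K(A)\Rightarrow K(\neg\neg A)$ and $K(\neg\neg A)\Rightarrow K(A)$ in $\Gm$ together with admissible \Cut. Your Step~2 is more careful than the paper's proof, which passes directly from $K(\neg\neg A)$ to $\neg\neg K(A)$: the explicit unfolding $K(\neg\neg A)=\neg\neg\bigl(\neg\neg(K(A)\to K(\bot))\to K(\bot)\bigr)$, the minimal equivalence $\neg\neg\bot\Leftrightarrow\bot$, and the congruence lemma are what actually justify that reduction without L$\bot$.
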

		
		\begin{proof}
			It can be shown that $K$ is stable and expansive in~$\Gcs \cong \Maximum_{\neg\neg}(\Gm)$ by an easy induction argument, using the fact that $\neg\neg$ is stable and expansive in this extension and can therefore be “freely added and removed.”
			
			We now need to show that $\Gm\vdash K(A) \Rightarrow K(\neg\neg A)$.
			We observe that $K(A) = \neg\neg A'$ for some~$A'$. So the claim reduces to $\Gm \vdash \neg\neg A' \Rightarrow \neg\neg \neg\neg A'$, which is an easy consequence of the fact that $\neg\neg$ is a nucleus.
			Finally, we need to show that $\Gm\vdash K(\neg\neg A) \Rightarrow K(A)$.
			Again, $K(A) = \neg\neg A'$ for some~$A'$. The claim then reduces to $\Gm \vdash \neg\neg \neg\neg A' \Rightarrow \neg\neg A'$, which is an easy consequence of the fact that $\neg\neg$ is a nucleus.
		\end{proof}
		
		\begin{lem}
			\label{lem-Kolmogorov-Gentzen}
			$\Gm_{(K,K)}\preceq\Gm$.
		\end{lem}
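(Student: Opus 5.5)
We must show that every rule of $\Gm$, translated through $(K,K)$, is admissible in $\Gm$. The plan is a rule-by-rule check in the style of Lemma~\ref{lem-glivenko}. Throughout, we use that $\Gm$ admits \Cut\ and Refl$^+$, and that $\neg\neg$ is a nucleus in $\Gm$, so Lp$(1,\neg\neg)$ and R$(1,\neg\neg)$ are available. We also use two facts about $K$. First, every $K(A)$ has the form $\neg\neg A'$. Second, $K(A[t/x])=K(A)[t/x]$, so the eigenvariable conditions are preserved.

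\textbf{Rules that are themselves instances of $\Gm$ rules.} The translated sequents have the shape $K(\Gamma)\Rightarrow K(C)$, and each $K(C)$ is a double negation. This gives the following.
\begin{itemize}
\item IS$_{(K,K)}$ is $K(\Gamma_1),\neg\neg P,K(\Gamma_2)\Rightarrow\neg\neg P$, an instance of Refl$^+$.
\item LE$_{(K,K)}$ is an instance of LE.
\item R$\top_{(K,K)}$ concludes $\neg\neg\top$. It follows from R$\top$ and R$(1,\neg\neg)$.
\end{itemize}

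\textbf{Right rules.} These go as R$\wedge_{(1,\neg\neg)}$ does in Lemma~\ref{lem-glivenko}. For example, from $K(\Gamma)\Rightarrow K(A)$ and $K(\Gamma)\Rightarrow K(B)$, we apply R$\wedge$ to get $K(\Gamma)\Rightarrow K(A)\wedge K(B)$, then R$(1,\neg\neg)$ to get $K(\Gamma)\Rightarrow\neg\neg(K(A)\wedge K(B))=K(A\wedge B)$. The cases R$\vee_i$, R$\to$, R$\exists$ and R$\forall$ are handled the same way: apply the corresponding $\Gm$ rule to the translated premisses, then R$(1,\neg\neg)$. No cut is needed, because $K$ places the outer $\neg\neg$ on top of an ordinary connective whose immediate subformulae are already translated.

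\textbf{Left rules.} Here the principal formula is $\neg\neg(K(A)*K(B))$ or $\neg\neg\mathsf{Q}x\,K(A)$, and the succedent is $K(C)=\neg\neg C'$. The plan is to first apply the ordinary left rule to the inner formula $K(A)*K(B)$, and then use Lp$(1,\neg\neg)$ to replace it with its double negation. Lp$(1,\neg\neg)$ is applicable precisely because the succedent $K(C)$ is a double negation.
\begin{itemize}
\item L$\wedge_{(K,K)}$: from $K(A),K(B),K(\Gamma)\Rightarrow K(C)$, L$\wedge$ gives $K(A)\wedge K(B),K(\Gamma)\Rightarrow\neg\neg C'$, and Lp$(1,\neg\neg)$ yields the conclusion.
\item L$\vee$, L$\to$ and L$\exists$: identical. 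For L$\to$, the left premiss $K(\Gamma)\Rightarrow K(A)$ is used directly.
\item L$\forall_{(K,K)}$: the premiss is $K(A)[t/x],\neg\neg\forall x\,K(A),K(\Gamma)\Rightarrow K(C)$. We first use Lp$(1,\neg\neg)$ backwards in spirit. Concretely, we cut $\neg\neg\forall x K(A)$ against the derivable sequent $\forall xK(A)\Rightarrow\neg\neg\forall xK(A)$, obtained by Refl$^+$ and R$(1,\neg\neg)$, giving $K(A)[t/x],\forall x K(A),K(\Gamma)\Rightarrow K(C)$. Then we apply L$\forall$, and finally Lp$(1,\neg\neg)$.
\end{itemize}

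\textbf{Main obstacle.} The step I expect to need the most care is this interaction of L$\forall$ (and any rule whose principal formula is repeated in the premiss) with the outer double negation. The inner contraction-style formula must be brought back to undoubled form via \Cut, and exchanges are needed to place formulae at the correct positions for Lp$(1,\neg\neg)$. The remaining cases are routine once the nucleus rules and the double-negation form of $K(C)$ are in hand.
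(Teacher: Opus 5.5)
Your proof is correct and follows the paper's approach: a rule-by-rule check in $\Gm$ using Refl$^+$, \Cut, and the nucleus rules Lp$(1,\neg\neg)$ and R$(1,\neg\neg)$, relying on the fact that every $K(C)$ is a double negation. The differences are minor and in your favour. You close the right rules directly with R$(1,\neg\neg)$, whereas the paper routes R$\wedge_{(K,K)}$ through a cut that is not needed. You also spell out the L$\forall_{(K,K)}$ case, cutting against $\forall x\,K(A)\Rightarrow\neg\neg\forall x\,K(A)$ to handle the repeated principal formula, which the paper only calls ``analogous''.
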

		
		\begin{proof}\
			Again, recall that $\Gm$ admits \Cut\ \cite{NegriPlato01}.
			
			\noindent
			\textbf{The rule \textnormal{IS$_{(K,K)}$}.}
			This is an instance of Refl$^+$.
			
			\noindent
			\textbf{The rule \textnormal{R$\wedge_{(K,K)}$}.}
			Let us first consider the rule
			\[
			\prftree[r]{R$\wedge_{(K,K)}$}
			{K(\Gamma) \Rightarrow K(A)}{}
			{K(\Gamma) \Rightarrow K(B)}
			{K(\Gamma) \Rightarrow K(A \wedge B)}
			\]
			We obtain:
			\[\resizebox{\linewidth}{!}{\(
				\prftree[r]{\Cut}{
					\prftree[r]{R$\wedge$}{
						\prfassumption{K(\Gamma) \Rightarrow K(A)}
					}{
						\prfassumption{K(\Gamma) \Rightarrow K(B)}
					}{K(\Gamma) \Rightarrow K(A) \wedge K(B)}
				}{
					\prftree[r]{L$\wedge$}{
						\prftree[r]{dfn}{
							\prftree[r]{R$(1,\neg\neg)$}{
								\prftree[r]{R$\wedge$}{
									\prftree[r]{\textnormal{Refl$^+$}}{K(A), K(B) \Rightarrow K(A)}
								}{
									\prftree[r]{\textnormal{Refl$^+$}}{K(A), K(B) \Rightarrow K(B)}
								}{K(A), K(B) \Rightarrow K(A) \wedge K(B)}
							}{K(A), K(B) \Rightarrow \neg\neg(K(A) \wedge K(B))}
						}{K(A), K(B) \Rightarrow K(A \wedge B)}
					}{K(A) \wedge K(B) \Rightarrow K(A \wedge B)}
				}{K(\Gamma) \Rightarrow K(A \wedge B)}
				\)}\]
			
			\noindent
			\textbf{The rules \textnormal{R$\top_{(K,K)}$}, \textnormal{R$\vee_{1(K,K)}$},
				\textnormal{R$\vee_{2(K,K)}$}, and \textnormal{R$\exists_{(K,K)}$}.} Can be handled analogously.
			
			\noindent
			\textbf{The rule \textnormal{L$\wedge_{(K,K)}$}.}
			Next, consider the rule
			\[
			\prftree[r]{L$\wedge_{(K,K)}$}{K(A),K(B),K(\Gamma)\Rightarrow K(C)}{K(A \wedge B),K(\Gamma)\Rightarrow K(C)}
			\]
			We obtain:
			\[
			\prftree[r]{dfn}{
				\prftree[r]{Lp$(1,\neg\neg)$}{
					\prftree[r]{L$\wedge$}{K(A), K(B),K(\Gamma)\Rightarrow K(C)}{K(A) \wedge K(B),K(\Gamma)\Rightarrow K(C)}
				}{\neg\neg (K(A) \wedge K(B)),K(\Gamma)\Rightarrow K(C)}
			}{K(A \wedge B),K(\Gamma)\Rightarrow K(C)}
			\]
			where we can apply Lp$(1,\neg\neg)$ because
			$K(C)=\neg\neg C'$ for some formula $C'$.
			
			\noindent
			\textbf{The rules 
				\textnormal{L$\vee_{(K,K)}$},
				\textnormal{L$\to_{(K,K)}$},
				\textnormal{L$\exists_{(K,K)}$},
				\textnormal{L$\forall_{(K,K)}$}, and
				\textnormal{LE$_{(K,K)}$}.} Can be handled analogously.
			
			\noindent
			\textbf{The rule \textnormal{R$\to_{(K,K)}$}.}
			Let us consider the rule
			\[
			\prftree[r]{R$\to_{(K,K)}$}{K(A), K(\Gamma) \Rightarrow K(B)}{K(\Gamma) \Rightarrow K(A\to B)}
			\]
			We obtain:
			\[
			\prftree[r]{dfn}{
				\prftree[r]{R$(1,\neg\neg)$}{
					\prftree[r]{R$\to$}{
						{K(A), K(\Gamma) \Rightarrow K(B)}
					}{K(\Gamma) \Rightarrow K(A)\to K(B)}
				}{K(\Gamma) \Rightarrow \neg\neg (K(A)\to K(B))}
			}{K(\Gamma) \Rightarrow K(A\to B)}
			\]
			
			\noindent
			\textbf{The rule \textnormal{R$\forall_{(K,K)}$}.} Can be handled in the same manner.
		\end{proof}
		
		We can now retrieve {Kolmogorov's} negative translation theorem.
		
		\begin{prop}[Kolmogorov \cite{Kolmogorov25}]\
			\label{prop-kolmogorov}
			The function $K$ is a sound and faithful translation from $\Gc$ to $\Gm$.
		\end{prop}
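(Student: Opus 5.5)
The proof combines Theorem~\ref{thm-kolmogorov} with the equivalence between $\Gc$ and $\Gcs$. First I will settle the statement for $\Gcs$ in place of $\Gc$, since $\Gcs$ is single-succedent and $K$ acts pointwise on succedents, as in the second option of Remark~\ref{rmk-multisingle}. I take $\L:=\Gm$ and $j:=\neg\neg$.

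The hypotheses of Theorem~\ref{thm-kolmogorov} are checked as follows.
\begin{itemize}
\item \Refl\ and \Cut\ are admissible in $\Gm$: \Refl\ by Refl$^+$, and \Cut\ by cut elimination~\cite{NegriPlato01}.
\item $K$ is a proto-$\neg\neg$-translation from $\Maximum_{\neg\neg}(\Gm)\cong\Gcs$ to $\Gm$. This is Lemma~\ref{rmk-Kolmogorov-Gentzen}.
\item $\Gm_{(K,K)}\preceq\Gm$. This is Lemma~\ref{lem-Kolmogorov-Gentzen}.
\end{itemize}
Theorem~\ref{thm-kolmogorov} then gives that $K$ is sound and faithful from $\Maximum_{\neg\neg}(\Gm)$ to $\Gm$. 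Since $\Maximum_{\neg\neg}(\Gm)\cong\Gcs$, the two calculi derive the same sequents, because $\cong$ implies $\leq$ in both directions. Hence $K$ is sound and faithful from $\Gcs$ to $\Gm$.

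The remaining step, and the main obstacle, is passing from $\Gcs$ to $\Gc$. For single-succedent sequents the trace property, $\Gc\vdash\Gamma\Rightarrow A$ iff $\Gcs\vdash\Gamma\Rightarrow A$, transfers the result at once. Every $\Gm$-derivable sequent is single-succedent, so $K(\Gamma)\Rightarrow K(\Delta)$ can only be derivable when $\Delta$ has at most one formula. For empty succedents I would take $\Gc\vdash\Gamma\Rightarrow{}$ iff $\Gcs\vdash\Gamma\Rightarrow\bot$, as in Remark~\ref{rmk-multisingle}, and check that this matches the $\Gm$-derivability of $K(\Gamma)\Rightarrow{}$ against $K(\Gamma)\Rightarrow K(\bot)$. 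I expect to treat $\bot$ as atomic, so $K(\bot)=\neg\neg\bot$, and minimal logic proves $\neg\neg\bot\leftrightarrow\bot$.

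For succedents with several formulae, the statement should be read through the encoding of Remark~\ref{rmk-multisingle}: take $\gright(C_1,\ldots,C_n):=K(C_1\vee\cdots\vee C_n)$ and $\gright(\epsilon):=\bot$. Then soundness and faithfulness follow from the single-succedent case together with $\Gc\vdash\Gamma\Rightarrow D_1,\ldots,D_n$ iff $\Gc\vdash\Gamma\Rightarrow D_1\vee\cdots\vee D_n$. This bookkeeping is where I expect the care to be needed. The substantive work is already done by the lemmas.
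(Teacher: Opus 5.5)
Your proposal is correct and matches the paper's proof, which simply applies Theorem~\ref{thm-kolmogorov} with Lemmata~\ref{rmk-Kolmogorov-Gentzen} and~\ref{lem-Kolmogorov-Gentzen}. Your extra step from $\Maximum_{\neg\neg}(\Gm)\cong\Gcs$ to $\Gc$ spells out what the paper leaves to the trace property: the statement is really about single-succedent sequents, or must be read through the encoding of Remark~\ref{rmk-multisingle}. One small slip: $\Gm$ also derives no sequent with an empty succedent, so the empty case cannot be matched against $K(\Gamma)\Rightarrow{}$ under the pointwise clause $K(\epsilon)=\epsilon$; it genuinely needs $\gright(\epsilon):=\bot$, as in your final paragraph.
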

		
		\begin{proof}
			Direct application of Theorem~\ref{thm-kolmogorov} given Lemmata~\ref{rmk-Kolmogorov-Gentzen}~and~\ref{lem-Kolmogorov-Gentzen}.
		\end{proof}
		
		Analogously, our result can also be used to derive other negative translation theorems, 
		such as those established by Gödel~\cite{Godel33}, Gentzen~\cite{Gentzen36} and Kuroda~\cite{Kuroda51}.
		
		\section{Kuroda's translation theorem with minimality}
		\label{sec-Kuroda-min}
		
		Notice that many negative translation theorems were originally formulated and proved over intuitionistic logic rather than minimal logic. Some translations—such as Gödel's and Gentzen's—can indeed be pushed down to the minimal setting, but this is not true in general. For instance, Kuroda's does not work on minimal logic.
		Therefore, we want to apply our results to describe the least logic for which Kuroda's double negation theorem holds.
		
		Consider \Gm, and define
		\[
		\begin{array}{rclcrcl}
			J(P) &:=& P, 
			\\[1mm]
			J(A * B) &:=& J(A) * J(B) &\text{if } * \in \{\wedge, \vee, \to\}, 
			\\[1mm]
			J(\exists x\, A) &:=& \exists x\, J(A), 
			\\[1mm]
			J(\forall x\, A) &:=& \forall x\, \neg\neg J(A),
			\\[1mm]
			J(\epsilon) &:=& \epsilon,
			\\[1mm]
			J(C_1,\ldots,C_n) &:=& J(C_1), \ldots, J(C_n).
		\end{array}
		\]
		We observe that, for every formula $A$, and every term $t$,
		\(
		J(A[t/x])=(J(A))[t/x].
		\)
		In particular, the eigenvariable conditions of the quantifier rules are
		preserved under the transformation $J$.
		
		Let us define the calculus \Gk\ as \(\Gm\), plus Cut and
		\[
		\prftree[r]{R$\to_{(J,\neg\neg J)}$}
		{J(A),J(\Gamma)\Rightarrow\neg\neg J(B)}
		{J(\Gamma)\Rightarrow\neg\neg J(A\to B)}.
		\]
		We will dub this \emph{Kuroda's logic}.
		Let us state and prove the following version of Kuroda's negative translation theorem:
		
		\begin{prop}\footnote{
				In~\cite{Fellin26}, the corresponding application was formulated
				using $\neg\neg J$ on both sides, due to the limitations of the framework
				developed there. Here we instead use $(J,\neg\neg J)$. The two formulations
				are equivalent, so this change does not affect the resulting translation
				theorem. We adopt the latter formulation here because it provides a
				genuine application of the more general sequent-level framework developed
				in the present paper.
			}
			\label{prop-Kuroda}
			Let $\K$ be a calculus such that
			$\Gm\preceq\K\preceq\Gcs$.
			Then:
			\begin{enumerate}
				\item If $\K\vdash J(\Gamma) \Rightarrow \neg\neg J(A)$, then $\Gcs\vdash \Gamma \Rightarrow A$.
				\item The following are equivalent:
				\begin{enumerate}
					\item If $\Gcs\vdash \Gamma \Rightarrow A$, then $\K\vdash J(\Gamma) \Rightarrow \neg\neg J(A)$.
					\item $\Gk\preceq\K$.
				\end{enumerate}
			\end{enumerate}
		\end{prop}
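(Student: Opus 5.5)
Following the pattern of Proposition~\ref{prop-Kuroda-Segerberg}, I would derive this as an application of Theorem~\ref{thm-main}. Take $\M:=\Gcs$, $\L:=\Gm$ and $(\gleft,\gright):=(J,\neg\neg J)$. The proof then has three parts: check the hypotheses of Theorem~\ref{thm-main}, identify $\G=\Gm\cup\Gcs_{(J,\neg\neg J)}$ with $\Gk$ up to $\cong$, and read off the claims.

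\textbf{Step 1: sound and faithful from $\Gcs$ to $\Gcs$.} Over $\Gcs\cong\Maximum_{\neg\neg}(\Gm)$, both $J$ and $\neg\neg J$ are equivalent to the identity. For $J$ this follows by induction on $A$, since $\forall x\,\neg\neg J(A)$ and $\forall x\,J(A)$ are interderivable once $\neg\neg$ is stable and expansive. Hence both $J$ and $\neg\neg J$ are stable and expansive in $\Gcs$ in the required sense. Then the argument of Remark~\ref{rmk-jstable}, adapted to the pair, gives that $(J,\neg\neg J)\colon\Gcs\to\Gcs$ is sound and faithful. With Cut available in $\Gcs$, this is just replacement of equivalents on each side.

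\textbf{Step 2: $\G\cong\Gk$.} This is the main work, as in Lemma~\ref{lem-kuroda-segerberg}. First, $\Gk\preceq\G$. Indeed $\Gm\subseteq\G$, the rule R$\to_{(J,\neg\neg J)}$ is literally R$\to$ of $\Gcs$ translated, and Cut is handled by $\Cut_{(J,\neg\neg J)}$ together with R$(1,\neg\neg)$- and Lp$(1,\neg\neg)$-style steps. Conversely, I must show that every rule of $\Gcs_{(J,\neg\neg J)}$ is admissible in $\Gk$, checked rule by rule.
\begin{itemize}
\item IS$_{(J,\neg\neg J)}$ follows from IS and R$(1,\neg\neg)$.
\item The left rules L$\wedge$, L$\vee$, L$\exists$ and LE, translated, are instances of the originals, because $J$ commutes with these connectives.
\item L$\forall_{(J,\neg\neg J)}$ needs Lp$(1,\neg\neg)$ to pass from $\neg\neg J(A)[t/x]$ to $J(A)[t/x]$. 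This works because the succedent is $\neg\neg$-shaped.
\item The right rules for $\wedge,\vee,\exists,\top$ are handled as R$\wedge_{(1,\neg\neg)}$ in Lemma~\ref{lem-glivenko}, using Cut.
\item L$\to_{(J,\neg\neg J)}$ is handled as in Lemma~\ref{lem-glivenko}.
\item R$\forall_{(J,\neg\neg J)}$ goes from $J(\Gamma)\Rightarrow\neg\neg J(A)[y/x]$ to $\neg\neg\forall x\,\neg\neg J(A)$, using R$\forall$ and then R$(1,\neg\neg)$. This is exactly why Kuroda places $\neg\neg$ under $\forall$, and why no double negation shift is needed.
\item R$\to_{(J,\neg\neg J)}$ is primitive in $\Gk$.
\item Cut$_{(J,\neg\neg J)}$ goes from $J(\Gamma)\Rightarrow\neg\neg J(A)$ and $J(A),J(\Delta)\Rightarrow\neg\neg J(B)$ to the conclusion, via Lp$(1,\neg\neg)$ and Cut.
\item The extra rule L$\neg\neg$ of $\Gcs$ translates to: from $J(A),J(\Gamma)\Rightarrow\neg\neg J(C)$ infer $\neg\neg J(A)\to\bot$ style antecedents, i.e. $J(\neg\neg A)=\neg\neg J(A)$ in the antecedent. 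This is again Lp$(1,\neg\neg)$.
\end{itemize}
Lp$(1,\neg\neg)$ and R$(1,\neg\neg)$ are admissible in $\Gm$, hence in $\Gk$, because $\neg\neg$ is a nucleus. I expect the delicate point to be confirming that every rule whose translated succedent is not directly $\neg\neg$-headed still fits these patterns.

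\textbf{Step 3: conclusion.} Given $\Gm\preceq\K\preceq\Gcs$, faithfulness (Theorem~\ref{thm-main}(\ref{item-main-faithful})) yields item~1. For item~2, (b)$\Rightarrow$(a) is soundness (Theorem~\ref{thm-main}(\ref{item-main-sound})) via $\G\cong\Gk\preceq\K$. For (a)$\Rightarrow$(b), item~1 together with (a) make $(J,\neg\neg J)\colon\Gcs\to\K$ sound and faithful. Minimality (Theorem~\ref{thm-main}(\ref{item-main-min})) then gives $\Gk\cong\G\preceq\K$.
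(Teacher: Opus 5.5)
Your route is the paper's: Theorem~\ref{thm-main} with $\L=\Gm$, $\M=\Gcs$ and the pair $(J,\neg\neg J)$. Step~1 and the rule-by-rule check of $\Gcs_{(J,\neg\neg J)}\preceq\Gk$ coincide with the paper's proof. Your treatment of L$\forall_{(J,\neg\neg J)}$ via Lp$(1,\neg\neg)$ is more accurate than the paper's ``instance of L$\forall$'', since $J(\forall x\,A)=\forall x\,\neg\neg J(A)$. Item~1 and (b)$\Rightarrow$(a) go through, the latter via $\G\leq\Gk\leq\K$, which needs only transitivity of $\leq$.

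The gap is the other half of your Step~2, $\Gk\preceq\G$ for $\G=\Gm\cup\Gcs_{(J,\neg\neg J)}$, on which your (a)$\Rightarrow$(b) rests. $\Cut_{(J,\neg\neg J)}$ only cuts sequents of the form $J(\Gamma)\Rightarrow\neg\neg J(A)$, and $\G$ has no primitive \Cut. So neither \Cut\ nor the nucleus rules are available in $\G$: admissibility in $\Gm$ need not survive extension, and in $\Gk$ they hold only because \Cut\ is primitive. In fact \Cut\ is not admissible in $\G$. The sequent $\Rightarrow\neg\neg(\bot\to P)$ follows by R$\to_{(J,\neg\neg J)}$ from $\bot\Rightarrow\neg\neg P$, and $\neg\neg(\bot\to P),\neg(\bot\to P)\Rightarrow\bot$ is derivable in $\Gm$. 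Their cut $\neg(\bot\to P)\Rightarrow\bot$, however, is not derivable in $\G$. Every rule of $\Gcs_{(J,\neg\neg J)}$ concludes a succedent of the form $\neg\neg X$, so only $\Gm$ rules can conclude it, and the only applicable one is L$\to$. Its left premiss $\Rightarrow\bot\to P$ can only come by R$\to$ from $\bot\Rightarrow P$, and no rule of $\G$ concludes that, since $\Gm$ has no L$\bot$.

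Consequently $\K:=\G$ satisfies $\Gm\preceq\K\preceq\Gcs$; the translated rules are admissible in $\Gcs$ because $(J,\neg\neg J)$ is sound and faithful there. It also satisfies (a) by soundness, yet not (b). So (a)$\Rightarrow$(b) is false as stated, and your step cannot be repaired. The paper's proof has the same hole, only silently: it verifies $\Gcs_{(J,\neg\neg J)}\preceq\Gk$, whereas minimality yields merely $\G\preceq\K$, not $\Gk\preceq\K$.

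What does hold is (a)$\Rightarrow$(b) under the extra hypothesis that \Cut\ is admissible in $\K$. Argue it rule by rule: rules of $\Gm$ by $\Gm\preceq\K$, R$\to_{(J,\neg\neg J)}$ because it belongs to $\G$ and $\G\preceq\K$, and \Cut\ by the new hypothesis. Do not chain $\preceq$, which is not transitive in general: in Remark~\ref{rmk-calculiorders}, $\L_2\preceq\L_0\preceq\L_3$ but $\L_2\not\preceq\L_3$.
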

		
		\begin{proof}
			The claim follows by verifying all the conditions of
			Theorem~\ref{thm-main}.
			
			As in Lemma~\ref{lem-Kolmogorov-Gentzen}, an easy induction shows that
			$J$ is stable and expansive in $\Gcs$, using the fact that $\neg\neg$
			can be ``freely added and removed.'' Consequently, the same holds for
			$\neg\neg J$. Hence, by Remark~\ref{rmk-jstable}, the map
			$(J,\neg\neg J)\colon\Gcs\to\Gcs$ is sound and faithful.
			
			It remains to show that
			$\Gcs_{(J,\neg\neg J)}\preceq\Gk.$
			
			\noindent
			\textbf{The rule \textnormal{R$\wedge_{(J,\neg\neg J)}$}.}
			Let us first consider the rule
			\[
			\prftree[r]{R$\wedge_{(J,\neg\neg J)}$}
			{J(\Gamma) \Rightarrow \neg\neg J(A)}{}
			{J(\Gamma) \Rightarrow \neg\neg J(B)}
			{J(\Gamma) \Rightarrow \neg\neg J(A \wedge B)}
			\]
			We get:
			\[\resizebox{\linewidth}{!}{\(
				\prftree[r]{\Cut}{
					\prftree[r]{R$\wedge$}{
						\prfassumption{J(\Gamma) \Rightarrow \neg\neg J(A)}
					}{
						\prfassumption{J(\Gamma) \Rightarrow \neg\neg J(B)}
					}{J(\Gamma) \Rightarrow \neg\neg J(A) \wedge \neg\neg J(B)}
				}{
					\prftree[r]{L$\wedge$}{
						\prftree[r]{dfn}{
							\prftree[r]{Lp$(1,\neg\neg)$}{
								\prftree[r]{R$(1,\neg\neg)$}{
									\prftree[r]{R$\wedge$}{
										\prftree[r]{\textnormal{Refl$^+$}}{J(A), J(B) \Rightarrow J(A)}
									}{
										\prftree[r]{\textnormal{Refl$^+$}}{J(A), J(B) \Rightarrow J(B)}
									}{J(A), J(B) \Rightarrow J(A) \wedge J(B)}
								}{J(A), J(B) \Rightarrow \neg\neg(J(A) \wedge J(B))}
							}{\neg\neg J(A), \neg\neg J(B) \Rightarrow \neg\neg(J(A) \wedge J(B))}
						}{\neg\neg J(A), \neg\neg J(B) \Rightarrow \neg\neg J(A \wedge B)}
					}{\neg\neg J(A) \wedge \neg\neg J(B) \Rightarrow \neg\neg J(A \wedge B)}
				}{J(\Gamma) \Rightarrow \neg\neg J(A \wedge B)}
				\)}\]
			
			\noindent
			\textbf{The rules \textnormal{R$\top_{(J,\neg\neg J)}$},
				\textnormal{R$\vee_{1(J,\neg\neg J)}$},
				\textnormal{R$\vee_{2(J,\neg\neg J)}$}, and
				\textnormal{R$\exists_{(J,\neg\neg J)}$}.} 
			Can be handled analogously.
			
			\noindent
			\textbf{The rules 
				\textnormal{L$\wedge_{(J,\neg\neg J)}$},
				\textnormal{L$\vee_{(J,\neg\neg J)}$},
				\textnormal{L$\exists_{(J,\neg\neg J)}$},
				\textnormal{L$\forall_{(J,\neg\neg J)}$}, and
				\textnormal{LE$_{(J,\neg\neg J)}$}.}
			These are instances of \textnormal{L$\wedge_{(J,\neg\neg J)}$},
			\textnormal{L$\vee_{(J,\neg\neg J)}$},
			\textnormal{L$\exists_{(J,\neg\neg J)}$},
			\textnormal{L$\forall_{(J,\neg\neg J)}$}, and
			\textnormal{LE$_{(J,\neg\neg J)}$}, respectively.
			
			\noindent
			\textbf{The rule \textnormal{R$\to_{(J,\neg\neg J)}$}.}
			Let us now consider the rule
			\[
			\prftree[r]{R$\to_{(J,\neg\neg J)}$}
			{J(\Gamma) \Rightarrow \neg\neg J(A)}
			{J(B), J(\Gamma) \Rightarrow \neg\neg J(C)}
			{J(A\to B), J(\Gamma) \Rightarrow \neg\neg J(C)}
			\]
			We get:
			\[
			\resizebox{\linewidth}{!}{\(
				\prftree[r]{dfn}{
					\prftree[r]{\Cut}{
						\prftree[r]{R$\to$}{
							\prftree[r]{Lp$(1,\neg\neg)$}{
								\prftree[r]{R$(1,\neg\neg)$}{
									\prftree[r]{LE}{
										\prftree[r]{L$\to$}{
											\prftree[r]{Refl$^+$}{J(A) \Rightarrow J(A)}
										}{
											\prftree[r]{Refl$^+$}{J(B), J(A) \Rightarrow J(B)}
										}{J(A)\to J(B), J(A) \Rightarrow J(B)}
									}{J(A), J(A)\to J(B) \Rightarrow J(B)}
								}{J(A), J(A)\to J(B) \Rightarrow \neg\neg J(B)}
							}{\neg\neg J(A), J(A)\to J(B) \Rightarrow \neg\neg J(B)}
						}{J(A)\to J(B) \Rightarrow \neg\neg J(A)\to \neg\neg J(B)}
					}{
						\prftree[r]{L$\to$}{J(\Gamma)\Rightarrow \neg\neg J(A)}{
							\prftree[r]{Lp$(1,\neg\neg)$}{
								J(B), J(\Gamma) \Rightarrow \neg\neg J(C)
							}{\neg\neg J(B), J(\Gamma) \Rightarrow \neg\neg J(C)}
						}{\neg\neg J(A)\to \neg\neg J(B), J(\Gamma) \Rightarrow \neg\neg J(C)}
					}{J(A)\to J(B), J(\Gamma) \Rightarrow \neg\neg J(C)}
				}{J(A\to B), J(\Gamma) \Rightarrow \neg\neg J(C)}
				\)}
			\]

			\noindent
			\textbf{The rule \textnormal{R$\forall_{(J,\neg\neg J)}$}.}
			Let us now consider the rule
			\[
			\prftree[r]{R$\forall_{(J,\neg\neg J)}$}
			{J(\Gamma) \Rightarrow \neg\neg J(A[y/x])}
			{J(\Gamma) \Rightarrow \neg\neg J(\forall x\, A)} 
			\quad(y\ \text{fresh})
			\]
			We get:
			\[
			\prftree[r]{R$(1,\neg\neg)$}{
				\prftree[r]{dfn}{
					\prftree[r]{R$\forall$}
					{J(\Gamma) \Rightarrow \neg\neg J(A[y/x])}
					{J(\Gamma) \Rightarrow \forall x\, \neg\neg J(A)}
				}{J(\Gamma) \Rightarrow J(\forall x\, A)}
			}{J(\Gamma) \Rightarrow \neg\neg J(\forall x\, A)}
			\]
			
			\noindent
			\textbf{The rule \textnormal{R$\to_{(J,\neg\neg J)}$}.} Admissible by definition of
			$\Gk$. 
			
			\noindent
			\textbf{The rule \textnormal{L$\neg\neg_{(J,\neg\neg J)}$}.} Admissible as an
			instance of Lp$(1,\neg\neg)$, since $\neg\neg$ is a nucleus in $\Gk$.
			
			\noindent
			\textbf{The rule \textnormal{IS$_{(J,\neg\neg J)}$}.} Follows directly from
			R$(1,\neg\neg)$.
			
			\noindent
			\textbf{The rule \Cut$_{(J,\neg\neg J)}$.}
			Finally, consider the rule
			\[
			\prftree[r]{\Cut$_{(J,\neg\neg J)}$}
			{J(\Gamma)\Rightarrow \neg\neg J(A)}
			{J(\Delta_1),J(A),J(\Delta_2)\Rightarrow \neg\neg J(B)}
			{J(\Delta_1),J(\Gamma),J(\Delta_2)\Rightarrow \neg\neg J(B)}
			\]
			We get:
			\[
			\prftree[r]{\Cut}
			{J(\Gamma)\Rightarrow \neg\neg J(A)}
			{
				\prftree[r]{Lp$(1,\neg\neg)$}
				{J(\Delta_1),J(A),J(\Delta_2)\Rightarrow \neg\neg J(B)}
				{J(\Delta_1),\neg\neg J(A),J(\Delta_2)\Rightarrow \neg\neg J(B)}
			}
			{J(\Delta_1),J(\Gamma),J(\Delta_2)\Rightarrow \neg\neg J(B)}
			\qedhere
			\]
		\end{proof}
		
		In the final part of this section, we show that~$\Gk$ represents a logic
		distinct from those represented by the calculi introduced so far.
		
		Clearly, we have $\Gm \subseteq \Gk$, but we now show that this inclusion
		is strict. Consider a quantifier-free formula~$A$. By the definition of~$J$,
		we have $J(A)=A$. We then obtain:
		\[
		\prftree[r]{\Cut}{
			\prftree[r]{R$(1,\neg\neg)$}{
				\prftree[r]{R$\to$}{
					\prftree[r]{R$\to$}{
						\prftree[r]{\textnormal{Refl$^+$}}{\neg A, \bot \Rightarrow \bot}
					}{\bot \Rightarrow \neg\neg A}
				}{{}\Rightarrow \bot \to \neg\neg A}
			}{{}\Rightarrow \neg\neg(\bot \to \neg\neg A)}
		}{
			\prftree[r]{R$\to_{(J,\neg\neg J)}$}{
				\prftree[r]{LE}{
					\prftree[r]{Lp$(1,\neg\neg)$}{
						\prftree[r]{L$\to$}{
							\prftree[r]{\textnormal{Refl$^+$}}{\bot \Rightarrow \bot}
						}{
							\prftree[r]{\textnormal{Refl$^+$}}{\neg\neg A , \bot \Rightarrow \neg\neg A}
						}{\bot \to \neg\neg A , \bot \Rightarrow \neg\neg A}
					}{\neg\neg(\bot \to \neg\neg A) , \bot \Rightarrow \neg\neg A}
				}{\bot, \neg\neg(\bot \to \neg\neg A) \Rightarrow \neg\neg A}
			}{\neg\neg(\bot \to \neg\neg A) \Rightarrow \neg\neg(\bot \to A)}
		}{{}\Rightarrow \neg\neg(\bot \to A)}
		\]
		Thus, the double negation of explosion holds for quantifier-free formulae
		in~$\Gk$, whereas this is not the case in~$\Gm$ \cite{Segerberg68,Orevkov68}.
		Hence,
		\[
		\Gm \subsetneq \Gk.
		\]
		
		Next, observe that the rule~R$\to_{(\neg\neg,\neg\neg)}$ is strictly
		stronger than R$\to_{(J,\neg\neg J)}$. Since the former is admissible in
		both $\Gi$ and $\Gg$, it follows that
		\[
		\Gk \subseteq \Gi
		\qquad\text{and}\qquad
		\Gk \subseteq \Gg.
		\]
		As $\Gi$ and $\Gg$ are incomparable \cite{BartoliFellinTesi2026},
		both inclusions are strict. Consequently,
		\[
		\Gm \subsetneq \Gk \subsetneq \Gi,
		\qquad
		\Gk \subsetneq \Gg,
		\]
		and $\Gk$ represents a logic distinct from each of the other logics
		considered so far.
		
		The discussion above is intended as an initial analysis of~$\Gk$.
		We identify $\Gk$ as the minimal calculus validating Kuroda's negative
		translation theorem and show that it does not coincide with any of the
		previously considered systems. A more detailed comparison with these
		and other calculi is left for future investigation.
		
		\section{Conservation classes}
		
		Finally, for Orevkov-style results, we focus on calculi $\L$ and $\M$ on the same domain $S$, and we consider sequents where $\gleft$ does not alter the left-hand side and $\gright$ does not alter the right-hand side. We therefore define classes as follows.
		
		\begin{defi}
			Define $\Class_{(\gleft,\gright)}:=\{\Gamma \Rightarrow \Delta\colon \gleft(\Gamma)=\Gamma\text{ and }\gright(\Delta)=\Delta\}$.
		\end{defi}
		
		\begin{thm}
			\label{thm-orevkov}
			Let $\L$ and $\M$ be calculi on $S$.
			Let $\gleft,\gright$ such that $\M_{(\gleft,\gright)} \preceq \L$.
			Consider a sequent $\Gamma\Rightarrow\Delta$ in $\Class_{(\gleft,\gright)}$. Then
			\[ \text{If } \M\vdash\Gamma\Rightarrow\Delta \text{, then } \L\vdash\Gamma\Rightarrow\Delta. \]
			If, moreover, $\L\leq\M$, then also the converse holds.
		\end{thm}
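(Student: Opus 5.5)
The plan is to combine the soundness half of Proposition~\ref{thm-main-t} with the fixed-point property that defines $\Class_{(\gleft,\gright)}$. First observe that $\M_{(\gleft,\gright)}\preceq\L$ implies that $\L\cup\M_{(\gleft,\gright)}\preceq\L$: the rules of $\L$ are trivially admissible in $\L$, and the translated rules are admissible by hypothesis. Writing $\G:=\L\cup\M_{(\gleft,\gright)}$, we therefore have $\G\preceq\L$. Proposition~\ref{thm-main-t}(\ref{item-main-t-sound}), applied with $\K=\L$, then tells us that $(\gleft,\gright)\colon\M\to\L$ is a sound translation.

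Alternatively, and more transparently, I would prove soundness directly by induction on the $\M$-derivation. Suppose the last step is an instance of $r\in\M$ with premisses $\Gamma_i\Rightarrow\Delta_i$. By the induction hypothesis, $\L\vdash\gleft(\Gamma_i)\Rightarrow\gright(\Delta_i)$ for every $i$. The corresponding instance of $r_{(\gleft,\gright)}$ is admissible in $\L$, so $\L\vdash\gleft(\Gamma)\Rightarrow\gright(\Delta)$. This argument is robust even though $\preceq$ is not preserved under extensions: we only ever use admissibility in $\L$ itself, which is exactly the hypothesis.

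Now let $\Gamma\Rightarrow\Delta\in\Class_{(\gleft,\gright)}$ with $\M\vdash\Gamma\Rightarrow\Delta$. Soundness gives $\L\vdash\gleft(\Gamma)\Rightarrow\gright(\Delta)$. Since $\gleft(\Gamma)=\Gamma$ and $\gright(\Delta)=\Delta$, this sequent is literally $\Gamma\Rightarrow\Delta$, so $\L\vdash\Gamma\Rightarrow\Delta$. For the converse, if $\L\leq\M$ and $\L\vdash\Gamma\Rightarrow\Delta$, then $\M\vdash\Gamma\Rightarrow\Delta$ directly by the definition of $\leq$; membership in the class is not even needed for this direction.

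I expect no serious obstacle here. The only point needing care is the inductive step. The hypothesis concerns admissibility of the translated rules, not derivability, so the induction must be phrased as closure of $\L$-derivability under the rules $r_{(\gleft,\gright)}$, rather than as a construction of an explicit $\L$-derivation tree.
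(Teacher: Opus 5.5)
Your proof is correct and follows the paper's argument: you get soundness of $(\gleft,\gright)\colon\M\to\L$ from admissibility of $\M_{(\gleft,\gright)}$ in $\L$, then note that $\gleft(\Gamma)\Rightarrow\gright(\Delta)$ is literally $\Gamma\Rightarrow\Delta$ for sequents in $\Class_{(\gleft,\gright)}$, and the converse is immediate from $\L\leq\M$. Your appeal to Proposition~\ref{thm-main-t}(\ref{item-main-t-sound}) (or to the direct induction) is in fact the more precise citation: the paper invokes Theorem~\ref{thm-main}, whose extra hypothesis that $(\gleft,\gright)\colon\M\to\M$ be sound and faithful is not assumed here and is not needed for soundness.
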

		
		\begin{proof}
			Suppose that $\M\vdash\Gamma\Rightarrow\Delta$. By Theorem \ref{thm-main}, we get $\L\vdash\gleft(\Gamma)\Rightarrow\gright(\Delta)$. By assumption, the latter is the same as $\L\vdash\Gamma\Rightarrow\Delta$.
			The second statement is immediate.
		\end{proof}
		
		\subsection{Glivenko's second theorem}
		
		We now show how this corollary applies to logic in order to obtain the alternative version of Glivenko's theorem. We begin with a lemma.
		Let $\gleft:=1$, and
		\[
		\begin{array}{rclcrcl}
			\gright(A)&:=&\begin{cases}
				A
				&\text{if }A=\neg B\text{ for some }B,
				\\
				\neg\neg A
				&\text{otherwise,}
			\end{cases}
			\\[1mm]
			\gright(\epsilon) &:=& \bot,
			\\[1mm]
			\gright(C_1,\ldots,C_n)&:=&\gright(C_1\vee\cdots\vee C_n).
		\end{array}
		\]
		
		\begin{lem}\
			\label{lem-glivenko-neg}
			$(\Gcp)_{(\gleft,\gright)} \preceq \Gip$.
		\end{lem}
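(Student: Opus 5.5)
The plan is to reduce everything to a uniform statement about double negations of disjunctions, and then check the rules of $\Gcp$ one by one, as in Lemma~\ref{lem-glivenko}. For a list $\Delta=C_1,\ldots,C_n$ write $D_\Delta:=C_1\vee\cdots\vee C_n$, with $D_\epsilon:=\bot$.

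\textbf{Step 1: normalise the right-hand side.} First I show that $\Gip\vdash\gright(\Delta)\Rightarrow\neg\neg D_\Delta$ and $\Gip\vdash\neg\neg D_\Delta\Rightarrow\gright(\Delta)$ for every $\Delta$. There are three cases.
\begin{itemize}
\item If $\gright(\Delta)=\neg\neg D_\Delta$, there is nothing to prove.
\item If $D_\Delta=\neg B$, this is the intuitionistic equivalence $\neg B\leftrightarrow\neg\neg\neg B$.
\item If $\Delta=\epsilon$, we need $\bot\leftrightarrow\neg\neg\bot$. This holds because $\neg\bot$ is derivable by R$\to$ from Refl$^+$.
\end{itemize}
Since $\Gip$ admits \Cut, it then suffices to show the following. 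For every rule of $\Gcp$ with premisses $\Gamma_i\Rightarrow\Delta_i$ and conclusion $\Gamma\Rightarrow\Delta$, the rule with premisses $\Gamma_i\Rightarrow\neg\neg D_{\Delta_i}$ and conclusion $\Gamma\Rightarrow\neg\neg D_\Delta$ is admissible in $\Gip$. Throughout I use the fact that $\neg\neg$ is a nucleus in $\Gip$, i.e.\ Lp$(1,\neg\neg)$ and R$(1,\neg\neg)$. I also use that $\neg\neg$ commutes, up to $\Gip$-equivalence, with rearranging, reassociating and weakening disjunctions by $\bot$.

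\textbf{Step 2: the easy rules.}
\begin{itemize}
\item IS: use Refl$^+$ and R$\vee$ to get $\Gamma_1,P,\Gamma_2\Rightarrow D_\Delta$, then apply R$(1,\neg\neg)$.
\item L$\bot$ is an instance of L$\bot$ in $\Gip$.
\item R$\top$: $D_\Delta\vee\top$ is derivable, then apply R$(1,\neg\neg)$.
\item LE, L$\wedge$ and L$\vee$ are instances of the corresponding $\Gip$ rules.
\item RE and R$\vee$ only permute or reassociate the disjunction, so a cut with the appropriate equivalence handles them.
\end{itemize}

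\textbf{Step 3: the remaining right rules and L$\to$.} For R$\wedge$ I establish the sequent $\neg\neg(D\vee A),\neg\neg(D\vee B)\Rightarrow\neg\neg(D\vee(A\wedge B))$. Apply Lp$(1,\neg\neg)$ twice to reduce it to $D\vee A,\,D\vee B\Rightarrow\neg\neg(D\vee(A\wedge B))$, prove that by L$\vee$, R$\vee$ and R$\wedge$, and then cut it against the two premisses.

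For L$\to$ I derive $\neg\neg(D\vee A),\,\neg\neg(B\to\ldots)$ in the following form. From $B,\Gamma\Rightarrow\neg\neg D$, apply Lp to obtain the combined sequent $\neg\neg(D\vee A),\,A\to B,\,\Gamma\Rightarrow\neg\neg D$. To get it, reduce via Lp to $D\vee A,\,A\to B,\Gamma\Rightarrow\neg\neg D$. The left disjunct gives the goal by R$(1,\neg\neg)$. The right disjunct gives the goal by L$\to$ together with the premiss $B,\Gamma\Rightarrow\neg\neg D$. Cutting with the first premiss then finishes this case.

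\textbf{Step 4: the main obstacle, R$\to$.} The case I expect to be hardest is R$\to$: from $A,\Gamma\Rightarrow\neg\neg(D\vee B)$ I must obtain $\Gamma\Rightarrow\neg\neg(D\vee(A\to B))$. This is exactly where $\Gip$ rather than $\Gm$ is needed, as in the R$\to_{(1,\neg\neg)}$ case of Lemma~\ref{lem-glivenko}.

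The plan is to cut against a derivable sequent $A\to\neg\neg(D\vee B)\Rightarrow\neg\neg(D\vee(A\to B))$, which is obtained from the premiss by R$\to$. For that sequent, assume $\neg(D\vee(A\to B))$, and derive $\neg D$ and $\neg(A\to B)$ from it. From $\neg(A\to B)$ derive $\neg\neg A$, using L$\bot$ exactly as in the auxiliary sequent of Lemma~\ref{lem-glivenko}, and also $\neg B$. Then $A$ yields $\neg\neg(D\vee B)$, which contradicts $\neg D$ and $\neg B$. Hence $\neg A$, which contradicts $\neg\neg A$.

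I would write this out as an explicit $\Gip$ derivation built from Refl$^+$, LE, L$\to$, R$\to$ and L$\bot$. The bookkeeping of $\gright$ on negated formulas and on the empty succedent is entirely absorbed by Step 1.
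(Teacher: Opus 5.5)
Your proof is correct and follows the paper's route. The paper's one-line argument likewise reduces $\gright$ to $\neg\neg$ via the equivalence $\neg\neg\neg B\leftrightarrow\neg B$, then appeals to the nucleus-based rule checks of Lemma~\ref{lem-glivenko}. Your Step~1 normalisation to $\neg\neg D_\Delta$, including $\bot\leftrightarrow\neg\neg\bot$ for the empty succedent, supplies multi-succedent bookkeeping that the paper leaves implicit, as does your explicit handling of RE, R$\vee$ and the side disjunct $D$ in R$\wedge$, L$\to$ and R$\to$. This matters because Lemma~\ref{lem-glivenko} itself only covers the single-succedent rules of $\Gip$. The duplicated copy of $\Gamma$ left by your double cuts is harmless, since contraction and weakening are admissible in $\Gip$.
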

		
		\begin{proof}
			Once observed that $\neg\neg\neg A$ is minimally equivalent to $\neg A$, this follows from Lemma \ref{lem-glivenko}.
		\end{proof}
		
		\begin{prop}[Glivenko \cite{Glivenko29}]
			$\Gcp \vdash \Gamma \Rightarrow \neg A$ if and only if $\Gip \vdash \Gamma \Rightarrow \neg A$.
		\end{prop}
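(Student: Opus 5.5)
The plan is to apply Theorem~\ref{thm-orevkov} with $\M:=\Gcp$, $\L:=\Gip$, and the pair $(\gleft,\gright)$ just defined: $\gleft=1$, $\gright$ fixing negations and double-negating everything else, with multiple succedents encoded as a disjunction and the empty succedent as $\bot$. The hypothesis $\M_{(\gleft,\gright)}\preceq\L$ is Lemma~\ref{lem-glivenko-neg}, so the work lies in checking that the relevant sequents lie in $\Class_{(\gleft,\gright)}$ and in handling the converse.

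\textbf{Left to right.} Since $\gleft=1$, the condition $\gleft(\Gamma)=\Gamma$ holds trivially. For the succedent, the single-element list $\neg A$ satisfies $\gright(\neg A)=\gright(\neg A)$ with the one-disjunct convention, and $\neg A$ is a negation, so $\gright(\neg A)=\neg A$. Thus $\Gamma\Rightarrow\neg A$ belongs to $\Class_{(\gleft,\gright)}$, and the first part of Theorem~\ref{thm-orevkov} yields $\Gip\vdash\Gamma\Rightarrow\neg A$ whenever $\Gcp\vdash\Gamma\Rightarrow\neg A$.

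One point needs checking here: the list-level clause of $\gright$ sends a one-element list $C$ to $\gright(C)$ applied to the formula $C$ itself, with no $\vee$ introduced. I read $C_1\vee\cdots\vee C_n$ for $n=1$ as $C_1$.

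\textbf{Right to left.} I would apply the second clause of Theorem~\ref{thm-orevkov}, which requires $\Gip\leq\Gcp$, meaning every single-succedent sequent derivable in $\Gip$ is derivable in $\Gcp$. Each rule of $\Gip$ is derivable in $\Gcp$, reading single succedents as one-element lists; L$\vee$ and R$\vee_i$ need a little care. R$\vee_i$ is obtained from R$\vee$ of $\Gcp$ together with right weakening, which is admissible in $\Gc$. Alternatively, one can pass through $\Gcs$, the trace of $\Gc$, which extends $\Gm$ and contains L$\bot$ via the nucleus rules; this gives $\Gip\leq\Gcsp$, and equivalence of traces then gives $\Gip\leq\Gcp$ for single-succedent sequents.

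I expect the main obstacle to be this mismatch between the multi-succedent $\Gcp$ and the single-succedent $\Gip$, needed for $\Gip\leq\Gcp$. I would invoke the stated trace equivalence between $\Gcs$ and $\Gc$ together with the well-known admissibility of weakening in $\Gc$, rather than reproving these facts.
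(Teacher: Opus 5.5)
Your proposal is correct and follows the paper's route: Theorem~\ref{thm-orevkov} with $\Gcp$, $\Gip$ and Lemma~\ref{lem-glivenko-neg}, where you also spell out two points the paper leaves implicit, namely that $\Gamma\Rightarrow\neg A$ lies in the class and that $\Gip\leq\Gcp$ holds for the converse. One small correction: L$\to$ needs right weakening, since its left premiss $\Gamma\Rightarrow A$ must become $\Gamma\Rightarrow C,A$, whereas L$\vee$ is a direct instance; and because weakening is only admissible in $\Gc$, what you obtain is $\Gip\preceq\Gcp$ rather than derivability of the rules, which still suffices for $\leq$.
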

		\begin{proof}
			Direct application of Theorem~\ref{thm-orevkov} given Lemma~\ref{lem-glivenko-neg}.
		\end{proof}
		
		\subsection{Orevkov's theorems on conservation classes 1--4}
		\label{sec-Orevkov}
		
		To illustrate the applicability of our framework, we now show how the multi-succedent counterparts of Orevkov's theorems on classes~1--4 can be derived as instances of our main result.
		
		The underlying strategy is straightforward. We deal with translations $(\gleft_i,\gright_i)$ ($i\in\{1,\ldots,4\}$), where $\gleft_i$ is designed to act on formulae occurring in negative position, while $\gright_i$ acts on formulae occurring in positive position. Whenever a formula $F$ is not permitted to occur negatively, we define $\gleft_i(F):=\bot$; dually, whenever $F$ is not permitted to occur positively, we define $\gright_i(F):=\top$. In all other cases, $\gleft_i$ and $\gright_i$ are propagated recursively so as to preserve their respective negative and positive character. Consequently, any formula that does not involve the excluded connectives or quantifiers remains unchanged under the translation.%
		\footnote{
			Our approach is closely related in spirit to the method developed by
			Negri~\cite{Negri03,Negri16,Negri21}. Both methods exploit the behaviour
			of critical operators in order to obtain conservativity results.
			However, they deal with critical operators in fundamentally different
			ways. Negri's method avoids an occurrence of a critical operator by
			excluding the corresponding inference rule from the derivation. Our
			method instead eliminates the critical operator by translating it to a
			trivial formula, namely $\top$ or $\bot$ according to its polarity.
			Thus, rather than restricting the derivation by forbidding a rule, we
			modify the translation so that the corresponding rule becomes
			admissible for trivial reasons.
		}
		
		In the present setting, however, the pairs $(\gleft_i,\gright_i)$ are clearly not Lindenbaum-injective. As a result, the faithfulness and minimality statements of Theorem~\ref{thm-main} are not applicable. In particular, $(\gleft_i,\gright_i)$ do not interact well with the \Cut\ rule, making it necessary to work with a cut-free classical calculus, such as the multi-succedent \Gc. Since, however, we are considering single-succedent sequents, we employ the construction described in Remark~\ref{rmk-multisingle}: for a finite succedent $C_1,\ldots,C_n$, we define
		$\gright_i(C_1,\ldots,C_n):=\gright_i(C_1\vee\cdots\vee C_n).$
		
		To sum up, \emph{with the exception of the exclusion clauses discussed above}, the translation $(\gleft_0,\gright_0)$ is then defined as follows.
		\[
		\begin{array}{rclcrclc}
			\gleft_0(P) &:=& P,
			&&
			\gright_0(P) &:=& P,
			\\[1mm]
			\gleft_0(A * B) &:=& \gleft_0(A) * \gleft_0(B),
			&&
			\gright_0(A * B) &:=& \gright_0(A) * \gright_0(B),
			\\[1mm]
			\gleft_0(A \to B) &:=& \gright_0(A) \to \gleft_0(B),
			&&
			\gright_0(A \to B) &:=& \gleft_0(A) \to \gright_0(B),
			\\[1mm]
			\gleft_0(\mathsf{Q} x\, A) &:=& \mathsf{Q} x\, \gleft_0(A),
			&&
			\gright_0(\mathsf{Q} x\, A) &:=& \mathsf{Q} x\, \gright_0(A),
			\\[1mm]
			\gleft_0(\epsilon) &:=& \epsilon,
			&&
			\gright_0(\epsilon) &:=& \bot,
			\\[1mm]
			\gleft_0(C_1,\ldots,C_n) &:=& \gleft_0(C_1),\ldots,\gleft_0(C_n),
			&&
			\gright_0(C_1,\ldots,C_n) &:=& \gright_0(C_1 \vee\cdots\vee C_n).
		\end{array}
		\]
		where $*\in\{\wedge,\vee\}$ and $\mathsf{Q}\in\{\forall,\exists\}$.
		Starting from $(\gleft_0,\gright_0)$, we define
		$(\gleft_i,\gright_i)$, for $i\in\{1,\ldots,4\}$, by replacing the
		clauses specified in the following table:
		\[
		\renewcommand{\arraystretch}{1.5}
		\begin{array}{r|rcl|rcl|}
			i &
			\multicolumn{3}{c|}{\text{modified clauses of }\gleft_i} &
			\multicolumn{3}{c|}{\text{modified clauses of }\gright_i}
			\\
			\hline
			1
			&
			\text{(none)}&&&
			\gright_1(A\to B) &:=& \top,
			\\&&&&
			\gright_1(\forall x\,A) &:=& \top
			\\[1mm]\hline
			2
			&
			\gleft_2(A\vee B)&:=&\bot
			&
			\gright_2(A\to B)&:=&\top
			\\[1mm]\hline
			3
			&
			\gleft_3(\forall x\,A)&:=&\bot
			&
			\gright_3(A\to B)&:=&\top
			\\[1mm]\hline
			4
			&
			\gleft_4(A\to B)&:=&\bot
			&
			\gright_4(A\vee B) &:=& \top,
			\\&&&&
			\gright_4(\exists x\,A) &:=& \top
			\\[1ex]
			\hline
		\end{array}
		\]
		
		We observe that, for every $i\in\{1,\ldots,4\}$, every formula $A$, and every
		term $t$,
		\[
		\gleft_i(A[t/x])=(\gleft_i(A))[t/x]
		\qquad\text{and}\qquad
		\gright_i(A[t/x])=(\gright_i(A))[t/x].
		\]
		In particular, the eigenvariable conditions of the quantifier rules are
		preserved under the transformations $\gleft_i$ and $\gright_i$.
		
		In each case, the goal is to prove that, if the sequent $\Gamma \Rightarrow A$ is in $\Class_{(\gleft_i,\gright_i)}$, then $\Gc \vdash \Gamma \Rightarrow A$ if and only if $\Gi \vdash \Gamma \Rightarrow A$.
		The idea is that, once we know $\Gc_{(\gleft,\gright)}\preceq\Gi$, then such statement follows as a direct application of Theorem~\ref{thm-orevkov}.
		
		\begin{lem}
			\label{lem-Orevkov}
			For every $i\in\{1,\ldots,4\}$, the rules
			\textnormal{IS$_{(\gleft_i,\gright_i)}$}, 
			\textnormal{LE$_{(\gleft_i,\gright_i)}$}, 
			\textnormal{RE$_{(\gleft_i,\gright_i)}$}, 
			\textnormal{L$\bot_{(\gleft_i,\gright_i)}$}, 
			\textnormal{R$\top_{(\gleft_i,\gright_i)}$}, 
			\textnormal{L$\wedge_{(\gleft_i,\gright_i)}$},  
			\textnormal{R$\wedge_{(\gleft_i,\gright_i)}$},
			\textnormal{L$\vee_{(\gleft_i,\gright_i)}$}, 
			\textnormal{R$\vee_{(\gleft_i,\gright_i)}$},
			\textnormal{L$\forall_{(\gleft_i,\gright_i)}$}, and
			\textnormal{L$\exists_{(\gleft_i,\gright_i)}$}
			are admissible in {\Gi}.
		\end{lem}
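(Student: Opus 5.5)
The plan is to verify the rules one at a time, using the fact that $\Gi$ admits weakening, contraction, \Cut\ and Refl$^+$ \cite{NegriPlato01}. We may therefore freely cut against small derivable sequents. First I will isolate two elementary facts about the translations.

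\textbf{Fact (a).} For every $i$ we have $\gleft_i(\bot)=\bot$ and $\gright_i(\top)=\top$. Moreover, by the substitution property stated above, $\gleft_i(A[t/x])=\gleft_i(A)[t/x]$.

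\textbf{Fact (b), a normal form for $\gright_i$ on lists.} For $i\in\{1,2,3\}$, since $\gright_i(A\vee B)=\gright_i(A)\vee\gright_i(B)$, we get $\gright_i(C_1,\ldots,C_n)=\gright_i(C_1)\vee\cdots\vee\gright_i(C_n)$. For $i=4$ we instead have $\gright_4(C_1,\ldots,C_n)=\top$ whenever $n\ge 2$, and $\gright_4(C)$ for a singleton. Both the empty-list clause $\gright_i(\epsilon)=\bot$ and the singleton case must be tracked separately, because $\gright_i(\Delta,A)$ with $\Delta=\epsilon$ is just $\gright_i(A)$.

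With these facts in hand, the left rules are mostly immediate.
\begin{itemize}
\item \textnormal{LE$_{(\gleft_i,\gright_i)}$} and \textnormal{L$\wedge_{(\gleft_i,\gright_i)}$} are instances of LE and L$\wedge$ in $\Gi$.
\item \textnormal{L$\bot_{(\gleft_i,\gright_i)}$} is an instance of L$\bot$, by Fact (a).
\item \textnormal{L$\vee_{(\gleft_i,\gright_i)}$} is an instance of L$\vee$ when $\gleft_i(A\vee B)=\gleft_i(A)\vee\gleft_i(B)$. In the excluded case $i=2$ the principal formula becomes $\bot$, and the conclusion follows by L$\bot$ without using the premisses.
\item \textnormal{L$\forall_{(\gleft_i,\gright_i)}$} works the same way, using the substitution property for $\gleft_i$; the excluded case is $i=3$.
\item \textnormal{L$\exists_{(\gleft_i,\gright_i)}$} is an instance of L$\exists$, noting that $y$ fresh for $\exists x\,A,\Gamma,\Delta$ remains fresh for the translated sequent.
\end{itemize}
Here the single-succedent $\Gi$ rule is applied with succedent $\gright_i(\Delta)$.

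For the right rules and the structural rules touching the succedent, I will show each translated conclusion derivable from the translated premisses using only R$\vee_{1,2}$, L$\vee$, Refl$^+$ and Cut.
\begin{itemize}
\item \textnormal{IS$_{(\gleft_i,\gright_i)}$}: the succedent $\gright_i(\Delta_1,P,\Delta_2)$ is either $\top$ (case $i=4$, length $\ge2$) or a disjunction having $P$ as a disjunct. It is obtained from $P\Rightarrow P$ by repeated R$\vee$ and weakening.
\item \textnormal{R$\top_{(\gleft_i,\gright_i)}$}: the succedent has $\top$ as a disjunct or is $\top$, so it is handled like IS.
\item \textnormal{RE$_{(\gleft_i,\gright_i)}$} and \textnormal{R$\vee_{(\gleft_i,\gright_i)}$}: the premiss and conclusion succedents differ only by commutativity or reassociation of disjunction (or are both $\top$ when $i=4$). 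Each follows by cutting against the intuitionistically derivable sequent $D\Rightarrow D'$ expressing that rearrangement.
\item \textnormal{R$\wedge_{(\gleft_i,\gright_i)}$}: with premisses $\gleft_i(\Gamma)\Rightarrow D\vee A'$ and $\gleft_i(\Gamma)\Rightarrow D\vee B'$, where $D=\gright_i(\Delta)$, I will cut against the intuitionistic distributivity sequents $D\vee A',\,D\vee B'\Rightarrow D\vee(A'\wedge B')$. These are derived by two L$\vee$'s, R$\wedge$ and R$\vee$. The degenerate cases need separate treatment: $\Delta$ empty gives plain R$\wedge$, and $i=4$ with $\Delta$ nonempty gives a conclusion $\top$ directly.
\end{itemize}

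The main obstacle I expect is this bookkeeping of $\gright_i$ on lists: the associativity and parenthesisation convention for $C_1\vee\cdots\vee C_n$, the $\epsilon\mapsto\bot$ and singleton boundary cases, and the collapse to $\top$ for $i=4$. Each of these must be checked so that the right-hand side of a translated premiss really has the shape $\gright_i(\Delta)\vee\gright_i(A)$ assumed above. I would handle this first with a small auxiliary claim: for all lists $\Delta,\Delta'$, the sequents $\gright_i(\Delta,\Delta')\Rightarrow\gright_i(\Delta)\vee\gright_i(\Delta')$ and their converses are derivable in $\Gi$ whenever both lists are nonempty, and $\gright_i(\Delta,\Delta')=\gright_i(\Delta')$ when $\Delta=\epsilon$. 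This is proved by induction on the length of $\Delta$. Once this claim is in place, every right-rule case reduces, via Cut, to the one-formula situation treated above.
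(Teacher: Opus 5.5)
Your proposal is correct and follows essentially the paper's route. Each translated rule is handled in one of four ways: as a plain instance of the corresponding $\Gi$ rule; by L$\bot$ when $\gleft_i$ sends the principal formula to $\bot$; by R$\top$ when $\gright_i$ collapses the succedent to $\top$; or by Cut against a small derivable sequent that rearranges or distributes disjunctions. One slip: restrict your auxiliary claim to $i\in\{1,2,3\}$. For $i=4$ and nonempty $\Delta,\Delta'$, the sequent $\gright_4(\Delta,\Delta')\Rightarrow\gright_4(\Delta)\vee\gright_4(\Delta')$ becomes $\top\Rightarrow\gright_4(\Delta)\vee\gright_4(\Delta')$, which fails in general (e.g.\ $\top\Rightarrow P\vee Q$). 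This is harmless, because you never use the claim in that case: you dispatch $i=4$ directly via R$\top$ or a plain instance.
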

		\begin{proof}
			\textbf{The rule $\textnormal{IS}^+_{(\gleft_i,\gright_i)}$.}	
			We first establish the admissibility of the single-succedent version
			\[
			\prftree[r]{\textnormal{IS}$^s_{(\gleft_i,\gright_i)}$}{}
			{\gleft_i(\Gamma_1),\gleft_i(P),\gleft_i(\Gamma_2)\Rightarrow\gright_i(P)}
			\]
			of $\textnormal{Refl}^+_{(\gleft_i,\gright_i)}$:
			\[
			\prftree[r]{\textnormal{dfn}}{
				\prftree[r]{\textnormal{Refl$^+$}}{}
				{\gleft_i(\Gamma_1),P,\gleft_i(\Gamma_2)\Rightarrow P}
			}{
				\gleft_i(\Gamma_1),\gleft_i(P),\gleft_i(\Gamma_2)
				\Rightarrow\gright_i(P)
			}
			\]
			
			We now prove the admissibility of the more general rule
			$\textnormal{IS}_{(\gleft_i,\gright_i)}$.
			There are three cases.
			
			\begin{enumerate}
				\item If both $\Delta_1$ and $\Delta_2$ are empty, we can directly
				apply $\textnormal{IS}^s_{(\gleft_i,\gright_i)}$.
				
				\item If at least one of $\Delta_1$ and $\Delta_2$ is inhabited and
				$\gright_i(B\vee C)=\top$, the result follows directly by R$\top$.
				
				\item If at least one of $\Delta_1$ and $\Delta_2$ is inhabited and
				$\gright_i(B\vee C)\neq\top$, we first apply
				$\textnormal{IS}^s_{(\gleft_i,\gright_i)}$ and then apply
				R$\vee_1$ and/or R$\vee_2$ as many times as necessary to introduce
				the contexts $\Delta_1$ and $\Delta_2$.
			\end{enumerate}
			
			\noindent
			\textbf{The rule $\textnormal{LE}_{(\gleft_i,\gright_i)}$.}	
			The rule $\textnormal{LE}_{(\gleft_i,\gright_i)}$ is an instance of LE.
			
			\noindent
			\textbf{The rule $\textnormal{RE}_{(\gleft_i,\gright_i)}$.}	
			If $\gright_i(A\vee B)=\top$, then the conclusion of
			$\textnormal{RE}_{(\gleft_i,\gright_i)}$ is directly derivable by
			R$\top$. We may therefore assume that
			$\gright_i(A\vee B)\neq\top$.
			
			First consider the case in which both $\Delta_1$ and $\Delta_2$ are
			empty. We have
			\begin{align*}
				\prftree[r]{\textnormal{Cut}}{
					\gleft_i(\Gamma)\Rightarrow
					\gright_i(B)\vee\gright_i(A)
				}{
					\prftree[r]{}{
						\gright_i(B)\vee\gright_i(A)
						\Rightarrow
						\gright_i(A)\vee\gright_i(B)
					}
				}{
					\gleft_i(\Gamma)\Rightarrow
					\gright_i(A)\vee\gright_i(B)
				}
				\tag{$\star$}
			\end{align*}
			where the right premiss is a standard derivable sequent.
			
			The cases in which one or both of the contexts are inhabited are
			handled analogously, using the following derivable sequents:
			\begin{align*}
				\Gi &\vdash
				D_1\vee A\vee B\vee D_2
				\Rightarrow
				D_1\vee B\vee A\vee D_2,
				&&\text{if both $\Delta_1$ and $\Delta_2$ are inhabited},\\
				\Gi &\vdash
				D_1\vee A\vee B
				\Rightarrow
				D_1\vee B\vee A,
				&&\text{if $\Delta_1$ is inhabited and $\Delta_2$ is not},\\
				\Gi &\vdash
				A\vee B\vee D_2
				\Rightarrow
				B\vee A\vee D_2,
				&&\text{if $\Delta_2$ is inhabited and $\Delta_1$ is not}.
			\end{align*}
			
			\noindent
			\textbf{The rule $\textnormal{L$\bot$}_{(\gleft_i,\gright_i)}$.} This is an instance of
			L$\bot$.
			
			\noindent
			\textbf{The rule $\textnormal{R$\top$}_{(\gleft_i,\gright_i)}$.}	
			First suppose that $\gright_i(A\vee B)\neq\top$.
			If $\Delta$ is empty, the rule is an instance of R$\top$.
			If $\Delta$ is inhabited, we have
			\[
			\prftree[r]{\textnormal{dfn}}{
				\prftree[r]{\textnormal{R$\vee_2$}}{
					\prftree[r]{\textnormal{R$\top$}}{
						\gleft_i(\Gamma)\Rightarrow\top
				}}{
					\gleft_i(\Gamma)\Rightarrow
					\gright_i(\Delta)\vee\top
				}
			}{
				\gleft_i(\Gamma)\Rightarrow
				\gright_i(\Delta,\top)
			}
			\]
			If $\gright_i(A\vee B)=\top$, the rule is always an instance of R$\top$.
			
			\noindent
			\textbf{The rules $\textnormal{L*}_{(\gleft_i,\gright_i)}$,
				for $*\in\{\vee,\wedge\}$.}	
			If $\gleft_i(A*B)\neq\bot$, the rule is an instance of L$*$.
			Otherwise, its conclusion is derivable from L$\bot$.
			
			\noindent
			\textbf{The rule $\textnormal{R$\wedge$}_{(\gleft_i,\gright_i)}$.}	
			We distinguish three cases.
			
			\begin{itemize}
				\item Suppose that
				$\gright_i(A\vee B)\neq\top$ and
				$\gright_i(A\wedge B)\neq\top$.
				If $\Delta$ is inhabited, admissibility follows by the same pattern
				as in $(\star)$, using the derivable sequent
				\[
				\Gi\vdash
				(D\vee A)\wedge(D\vee B)
				\Rightarrow
				D\vee(A\wedge B).
				\]
				If $\Delta$ is empty, then
				$\textnormal{R$\wedge$}_{(\gleft_i,\gright_i)}$ is an instance of
				R$\wedge$.
				
				\item Suppose that
				$\gright_i(A\vee B)\neq\top$ and
				$\gright_i(A\wedge B)=\top$.
				This is handled exactly as the case
				$\gright_i(A\vee B)\neq\top$ of
				$\textnormal{R$\top$}_{(\gleft_i,\gright_i)}$.
				
				\item Suppose that $\gright_i(A\vee B)=\top$.
				If $\Delta$ is inhabited, the conclusion is directly derivable by
				R$\top$. Otherwise,
				$\textnormal{R$\wedge$}_{(\gleft_i,\gright_i)}$ is an instance of
				R$\wedge$.
			\end{itemize}
			
			\noindent
			\textbf{The rule $\textnormal{R$\vee$}_{(\gleft_i,\gright_i)}$.}	
			If $\gright_i(A\vee B)\neq\top$, the premiss and conclusion are
			definitionally the same. Otherwise, the conclusion is directly
			derivable by R$\top$.
			
			\noindent
			\textbf{The rules
				$\textnormal{L$\mathsf{Q}$s}_{(\gleft_i,\gright_i)}$,
				for $\mathsf{Q}\in\{\forall,\exists\}$.}	
			If $\gleft_i(\mathsf{Q}x\,A)\neq\bot$, the rule is an instance of
			L$\mathsf{Q}$. Otherwise, its conclusion is derivable from L$\bot$.
			\qedhere
		\end{proof}
		
		What about R$\to_{(\gleft_i,\gright_i)}$, R$\forall_{(\gleft_i,\gright_i)}$,
		R$\exists_{(\gleft_i,\gright_i)}$, and L$\to_{(\gleft_i,\gright_i)}$?
		In general, their admissibility cannot be established solely from
		the preceding argument.
		
		\noindent
		\textbf{The rule $\textnormal{R$\forall$}_{(\gleft_i,\gright_i)}$.}
		We distinguish three cases.
		\begin{itemize}
			\item Suppose that
			$\gright_i(A\vee B)\neq\top$ and
			$\gright_i(\forall x\,A)=\top$.
			This is handled exactly as the case
			$\gright_i(A\vee B)\neq\top$ of
			$\textnormal{R$\top$}_{(\gleft_i,\gright_i)}$.
			
			\item Suppose that $\gright_i(A\vee B)=\top$.
			If $\Delta$ is inhabited, the conclusion of
			$\textnormal{R$\forall$}_{(\gleft_i,\gright_i)}$ is directly derivable
			by R$\top$. If $\Delta$ is empty, then
			$\textnormal{R$\forall$}_{(\gleft_i,\gright_i)}$ is an instance of
			R$\forall$.
			
			\item Suppose that
			$\gright_i(A\vee B)\neq\top$ and
			$\gright_i(\forall x\,A)\neq\top$.
			If $\Delta$ is empty, then
			$\textnormal{R$\forall$}_{(\gleft_i,\gright_i)}$ is an instance of
			R$\forall$.
			If $\Delta$ is inhabited, however, admissibility cannot be established
			in general from the preceding argument, as it depends on the particular
			clauses defining $(\gleft_i,\gright_i)$.
		\end{itemize}
		
		\noindent
		\textbf{The rule $\textnormal{R$\to$}_{(\gleft_i,\gright_i)}$.}
		Again, we distinguish three cases.
		\begin{itemize}
			\item Suppose that
			$\gright_i(A\vee B)\neq\top$ and
			$\gright_i(A\to B)=\top$.
			This is handled exactly as the case
			$\gright_i(A\vee B)\neq\top$ of
			$\textnormal{R$\top$}_{(\gleft_i,\gright_i)}$.
			
			\item Suppose that $\gright_i(A\vee B)=\top$.
			If $\Delta$ is inhabited, the conclusion of
			$\textnormal{R$\to$}_{(\gleft_i,\gright_i)}$ is directly derivable
			by R$\top$. If $\Delta$ is empty, then
			$\textnormal{R$\to$}_{(\gleft_i,\gright_i)}$ is an instance of
			R$\to$.
			
			\item Suppose that
			$\gright_i(A\vee B)\neq\top$ and
			$\gright_i(A\to B)\neq\top$.
			If $\Delta$ is empty, then
			$\textnormal{R$\to$}_{(\gleft_i,\gright_i)}$ is an instance of
			R$\to$.
			If $\Delta$ is inhabited, however, admissibility cannot be established
			in general from the preceding argument, as it depends on the particular
			clauses defining $(\gleft_i,\gright_i)$.
		\end{itemize}
		
		\noindent
		\textbf{The rule $\textnormal{R$\exists$}_{(\gleft_i,\gright_i)}$.}	
		We distinguish three cases.	
		\begin{itemize}
			\item Suppose that
			$\gright_i(A\vee B)\neq\top$ and
			$\gright_i(\exists x\,A)\neq\top$.
			Admissibility follows by an argument analogous to $(\star)$.
			
			\item Suppose that
			$\gright_i(A\vee B)\neq\top$ and
			$\gright_i(\exists x\,A)=\top$.
			This is handled exactly as the case
			$\gright_i(A\vee B)\neq\top$ of
			$\textnormal{R$\top$}_{(\gleft_i,\gright_i)}$.
			
			\item Suppose that $\gright_i(A\vee B)=\top$.
			If $\Delta$ is inhabited, the conclusion is directly derivable by
			R$\top$. 
			If $\Delta$ is empty, however, the premiss might have been derived by R$\top$, and admissibility cannot be established from the preceding argument.
		\end{itemize}
		
		\noindent
		\textbf{The rule $\textnormal{L$\to$}_{(\gleft_i,\gright_i)}$.}	
		We distinguish three cases.	
		\begin{itemize}
			\item Suppose that
			$\gright_i(A\vee B)\neq\top$ and
			$\gleft_i(A\to B)\neq\bot$.
			If $\Delta$ is inhabited, admissibility follows from admissibility of
			\[
			\prftree[r]{}{\Gamma\Rightarrow C\vee A}{B,\Gamma\Rightarrow C}{A\to B,\Gamma\Rightarrow C}
			\]
			which is an easy consequence of $\Gi\vdash C\vee A,B\to C,A\to B\Rightarrow C$ and Cut.
			If $\Delta$ is empty, $\textnormal{L$\to$}_{(\gleft_i,\gright_i)}$ is an instance of L$\to$. 
			
			\item Suppose that
			$\gright_i(A\vee B)\neq\top$ and
			$\gleft_i(A\to B)=\bot$.
			Then the conclusion is derivable by L$\bot$.
			
			\item Suppose that $\gright_i(A\vee B)=\top$.
			If $\Delta$ has at least two elements, the conclusion is directly derivable by
			R$\top$.
			If $\Delta$ is empty, one can just apply L$\to$.
			If $\Delta$ has exactly one element, however, the left premiss might have been derived by R$\top$, and admissibility cannot be established from the preceding argument.
		\end{itemize}
		
		Therefore, combining Lemma~\ref{lem-Orevkov} with the observations above,
		we obtain the following useful reduction.
		
		\begin{rem}
			\label{rmk-orevkov}
			To prove
			$\Gc_{(\gleft_i,\gright_i)}\preceq\Gi$,
			it suffices to verify, in $\Gi$, the admissibility of the four critical
			rules listed below. The conditions in the second column specify the
			instances of the rules that need to be considered, while those in the
			third column specify the translations $(\gleft_i,\gright_i)$ for which
			the corresponding rule may fail to be admissible:
			
			
			\centering
			\begin{tabular}{c|c|c}
				\textbf{Rule}
				& \textbf{instances to be considered}
				& \textbf{potentially critical clauses}
				\\ \hline
				R$\to_{(\gleft_i,\gright_i)}$
				& $\Delta$ inhabited
				& $\gright_i(A\vee B)\neq\top$ and
				$\gright_i(A\to B)\neq\top$
				\\[2mm]
				R$\forall_{(\gleft_i,\gright_i)}$
				& $\Delta$ inhabited
				& $\gright_i(A\vee B)\neq\top$ and
				$\gright_i(\forall x\,A)\neq\top$
				\\[2mm]
				R$\exists_{(\gleft_i,\gright_i)}$
				& $\Delta$ empty
				& $\gright_i(A\vee B)=\top$ and
				$\gright_i(\exists x\,A)\neq\top$
				\\[2mm]
				L$\to_{(\gleft_i,\gright_i)}$
				& $\Delta$ has exactly one element
				& $\gright_i(A\vee B)=\top$ and
				$\gleft_i(A\to B)\neq\bot$
			\end{tabular}
		\end{rem}
		
		\begin{prop}[Orevkov \cite{Orevkov68}, Class 1]\
			\label{prop-Orevkov}
			Suppose that the sequent $\Gamma \Rightarrow A$ is in $\Class_{(\gleft_1,\gright_1)}$.
			Then $\Gc \vdash \Gamma \Rightarrow A$ if and only if $\Gi \vdash \Gamma \Rightarrow A$.
		\end{prop}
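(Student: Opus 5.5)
The plan is to apply Theorem~\ref{thm-orevkov} with $\M:=\Gc$, $\L:=\Gi$ and $(\gleft,\gright):=(\gleft_1,\gright_1)$. This needs two inputs.

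\begin{enumerate}
\item The admissibility $\Gc_{(\gleft_1,\gright_1)}\preceq\Gi$, which gives the left-to-right direction.
\item The relation $\Gi\leq\Gc$, which gives the converse.
\end{enumerate}

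The second is standard: every rule of $\Gi$ is derivable in $\Gc$, reading a single succedent $C$ as the one-element list. Hence $\Gi\sqsubseteq\Gc$, and so $\Gi\leq\Gc$ by Remark~\ref{rmk-calculiorders}.

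For the first input, Lemma~\ref{lem-Orevkov} and Remark~\ref{rmk-orevkov} reduce the work to the four critical rules R$\to$, R$\forall$, R$\exists$ and L$\to$, in the instances listed in the table. I expect each of them to be vacuous for $i=1$, for the following reasons.
\begin{itemize}
\item \textbf{R$\to$ and R$\forall$.} Their critical clauses require $\gright_1(A\to B)\neq\top$, respectively $\gright_1(\forall x\,A)\neq\top$. By the modified clauses of $\gright_1$, both formulas equal $\top$, so these cases never arise.
\item \textbf{R$\exists$ and L$\to$.} Their critical clauses require $\gright_1(A\vee B)=\top$. For $i=1$ the disjunction clause is unmodified, so $\gright_1(A\vee B)=\gright_1(A)\vee\gright_1(B)$. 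This is syntactically a disjunction and hence never identical to $\top$, so these cases never arise either.
\end{itemize}
It follows that $\Gc_{(\gleft_1,\gright_1)}\preceq\Gi$.

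Next I would unpack membership in $\Class_{(\gleft_1,\gright_1)}$ for a single-succedent sequent $\Gamma\Rightarrow A$. Since $\gleft_1=\gleft_0$, a straightforward induction on formulas shows the following.
\begin{itemize}
\item $\gleft_1(B)=B$ if and only if $B$ has no negative occurrence of $\to$ or $\forall$.
\item $\gright_1(B)=B$ if and only if $B$ has no positive occurrence of $\to$ or $\forall$.
\end{itemize}
The induction runs simultaneously for $\gleft_1$ and $\gright_1$, because the implication clauses swap them. In the base case, atoms, $\bot$ and $\top$ are fixed.

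On a one-element succedent list, $\gright_1(A)$ is just the formula clause $\gright_1(A)$, with no disjunction introduced. So the hypothesis of the proposition gives exactly $\gleft_1(\Gamma)=\Gamma$ and $\gright_1(A)=A$. Theorem~\ref{thm-orevkov} then turns $\Gc\vdash\Gamma\Rightarrow A$ into $\Gi\vdash\gleft_1(\Gamma)\Rightarrow\gright_1(A)$, which is $\Gi\vdash\Gamma\Rightarrow A$.

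The main obstacle I expect is not conceptual. It is in confirming that the reduction of Remark~\ref{rmk-orevkov} really covers every rule of $\Gc$. In particular, the L$\to$ case with a one-element $\Delta$ and the R$\exists$ case with empty $\Delta$ must be matched correctly against the convention $\gright(\epsilon)=\bot$. I would double-check these cases directly, verifying that for $i=1$ no instance falls outside the non-critical cases already handled in Lemma~\ref{lem-Orevkov} and the subsequent discussion.
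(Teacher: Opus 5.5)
Your proposal is correct and follows the paper's route: Theorem~\ref{thm-orevkov} together with Remark~\ref{rmk-orevkov}. All four critical rows are vacuous for $i=1$, because $\gright_1$ sends $\to$ and $\forall$ to $\top$ but never sends a disjunction to $\top$, and this is the check the paper's ``direct consequence'' leaves implicit.

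One small correction: in the paper's strict sense, $\Gi\sqsubseteq\Gc$ fails. The rules R$\vee_1$, R$\vee_2$, R$\exists$ and L$\to$ of $\Gi$ need right weakening to be matched with the rules of $\Gc$, and weakening is only admissible in $\Gc$. So what you actually get is $\Gi\preceq\Gc$, which still yields the needed $\Gi\leq\Gc$.
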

		
		\begin{proof}
			Direct consequence of Theorem~\ref{thm-orevkov} and Remark \ref{rmk-orevkov}.
		\end{proof}
		
		\begin{lem}
			\label{lem-Harrop}
			If $\Gi\vdash \gleft_2(\Gamma) \Rightarrow \gright_2(D_1,\ldots,D_n)$, then $\Gi\vdash \gleft_2(\Gamma) \Rightarrow \gright_2(D_i)$ for some $i$.
		\end{lem}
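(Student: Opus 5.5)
The plan is to reduce the statement to a disjunction property for $\Gi$-derivations whose antecedent lies in the image of $\gleft_2$, and to prove that property by induction on the height of a (cut-free) $\Gi$-derivation.

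First I would unfold the right-hand side. By definition, $\gright_2(D_1,\ldots,D_n)=\gright_2(D_1\vee\cdots\vee D_n)$. The only clause in which $\gright_2$ departs from $\gright_0$ is $\gright_2(A\to B):=\top$, so $\gright_2$ commutes with $\vee$. Hence $\gright_2(D_1,\ldots,D_n)=\gright_2(D_1)\vee\cdots\vee\gright_2(D_n)$, associated in a fixed way. The case $n=1$ is trivial; note that the statement only makes sense for $n\geq 1$. By iterating a binary version along the association, it suffices to prove the following.
\begin{quote}
\emph{($\dagger$)} If $\Gi\vdash\gleft_2(\Gamma)\Rightarrow E_1\vee E_2$, then $\Gi\vdash\gleft_2(\Gamma)\Rightarrow E_1$ or $\Gi\vdash\gleft_2(\Gamma)\Rightarrow E_2$.
\end{quote}
Here $E_1,E_2$ are arbitrary formulae. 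This is a Harrop-style disjunction property: since $\gleft_2(A\vee B)=\bot$, no formula of the form $\gleft_2(F)$ contains a disjunction in the positions reached by the left rules.

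I would prove ($\dagger$) by induction on the height of the derivation, which is cut-free because $\Gi$ has no cut rule, distinguishing the last rule. The first group of cases concludes directly or is impossible:
\begin{itemize}
\item If the last rule is R$\vee_1$ or R$\vee_2$, the premiss is the required sequent.
\item If it is L$\bot$, both disjuncts are derivable by L$\bot$.
\item IS cannot conclude a disjunction.
\item L$\vee$ is impossible, because no formula of the form $\gleft_2(F)$ has $\vee$ as its main connective.
\end{itemize}
For the remaining left rules, I would check that the premiss antecedent is again of the form $\gleft_2(\Gamma')$, apply the induction hypothesis, and reapply the rule:
\begin{itemize}
\item For LE this is immediate.
\item For L$\wedge$, note that $\gleft_2(A\wedge B)=\gleft_2(A)\wedge\gleft_2(B)$.
\item For L$\exists$ and L$\forall$, use the commutation $\gleft_2(A[t/x])=(\gleft_2(A))[t/x]$ noted earlier. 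This also preserves the eigenvariable condition when L$\exists$ is reapplied.
\item For L$\to$, the principal formula is $\gleft_2(A\to B)=\gright_2(A)\to\gleft_2(B)$. The right premiss $\gleft_2(B),\gleft_2(\Gamma)\Rightarrow E_1\vee E_2$ has an antecedent in the image of $\gleft_2$, so the induction hypothesis applies to it. The left premiss $\gleft_2(\Gamma)\Rightarrow\gright_2(A)$ is kept unchanged, and L$\to$ is reapplied.
\end{itemize}

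The step I expect to require the most care is this bookkeeping: every antecedent formula produced by a left rule must again be $\gleft_2$ of something, or else $\bot$, in which case the goal follows by L$\bot$. In particular, formulae $\gleft_2(F)$ with $F$ a disjunction collapse to $\bot$ and never produce an L$\vee$ step. Once ($\dagger$) is in place, iterating it over the nested disjunction $\gright_2(D_1)\vee\cdots\vee\gright_2(D_n)$ yields some $i$ with $\Gi\vdash\gleft_2(\Gamma)\Rightarrow\gright_2(D_i)$, as required.
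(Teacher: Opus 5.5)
Your proposal is correct and takes essentially the paper's route: induction on the cut-free $\Gi$-derivation, with L$\vee$ excluded because $\gleft_2$ sends disjunctions to $\bot$, L$\bot$ closing immediately, and the remaining left rules (including the right premiss of L$\to$) passing the induction hypothesis through. Your detour through the binary property ($\dagger$) for arbitrary $E_1,E_2$ is a cleaner packaging, because it properly handles the R$\vee_1$/R$\vee_2$ case with $n\geq 2$, which the paper's proof passes over by asserting $n=1$ for all right rules.
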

		
		\begin{proof}
			We proceed by induction on the derivation of
			\(
			\gleft_2(\Gamma) \Rightarrow \gright_2(D_1,\ldots,D_n).
			\)
			Observe first that, by the definitions of $\gleft_2$ and $\gright_2$, this sequent cannot be obtained by an application of either L$\vee$ or R$\to$.
			In the cases IS, R$\top$, R$\wedge$, R$\vee_1$, R$\vee_2$, R$\exists$, and R$\forall$, we have $n=1$, and hence the desired implication is immediate.
			In the remaining cases, namely LE, L$\bot$, L$\wedge$, L$\to$, L$\exists$, and L$\forall$, the conclusion follows straightforwardly from the induction hypothesis.
		\end{proof}
		
		\begin{prop}[Orevkov \cite{Orevkov68}, Class 2]\
			Suppose that the sequent $\Gamma \Rightarrow A$ is in $\Class_{(\gleft_2,\gright_2)}$.
			Then $\Gc \vdash \Gamma \Rightarrow A$ if and only if $\Gi \vdash \Gamma \Rightarrow A$.
		\end{prop}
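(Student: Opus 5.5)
The plan is to apply Theorem~\ref{thm-orevkov} with $\L:=\Gi$ and $\M:=\Gc$, so everything reduces to showing $\Gc_{(\gleft_2,\gright_2)}\preceq\Gi$. The converse direction is immediate from $\Gi\leq\Gc$. By Lemma~\ref{lem-Orevkov} and Remark~\ref{rmk-orevkov}, it suffices to check the four critical rules, and only in the instances listed in the table there.

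The first step is to observe which critical instances can actually arise for $i=2$. The only modifications in $(\gleft_2,\gright_2)$ are $\gleft_2(A\vee B)=\bot$ and $\gright_2(A\to B)=\top$. In particular $\gright_2(A\vee B)=\gright_2(A)\vee\gright_2(B)\neq\top$ always. This disposes of three of the four critical rules:
\begin{itemize}
\item The critical instances of R$\exists_{(\gleft_2,\gright_2)}$ and L$\to_{(\gleft_2,\gright_2)}$ both require $\gright_2(A\vee B)=\top$, so they never occur.
\item The critical instance of R$\to_{(\gleft_2,\gright_2)}$ requires $\gright_2(A\to B)\neq\top$, which never holds.
\end{itemize}
Only R$\forall_{(\gleft_2,\gright_2)}$ with $\Delta=D_1,\ldots,D_n$ inhabited remains. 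This is the genuine obstacle, because its translated form
\[
\gleft_2(\Gamma)\Rightarrow\gright_2(D_1)\vee\cdots\vee\gright_2(D_n)\vee\gright_2(A[y/x])
\quad\leadsto\quad
\gleft_2(\Gamma)\Rightarrow\gright_2(D_1)\vee\cdots\vee\gright_2(D_n)\vee\forall x\,\gright_2(A)
\]
is an instance of the constant-domain principle, which is not intuitionistically valid in general.

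To handle it, I will use Lemma~\ref{lem-Harrop}. The premiss has the form $\gleft_2(\Gamma)\Rightarrow\gright_2(D_1,\ldots,D_n,A[y/x])$, so the lemma yields one of two outcomes:
\begin{itemize}
\item $\Gi\vdash\gleft_2(\Gamma)\Rightarrow\gright_2(D_k)$ for some $k$. Then repeated applications of R$\vee_1$/R$\vee_2$ give the conclusion directly.
\item $\Gi\vdash\gleft_2(\Gamma)\Rightarrow\gright_2(A[y/x])=\gright_2(A)[y/x]$. Since $y$ is fresh for $\Gamma$, the eigenvariable condition holds for $\gleft_2(\Gamma)$, by the substitution property of $\gleft_2,\gright_2$ noted before Lemma~\ref{lem-Orevkov}. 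So R$\forall$ gives $\gleft_2(\Gamma)\Rightarrow\forall x\,\gright_2(A)=\gright_2(\forall x\,A)$, and R$\vee_2$ then adds the disjuncts from $\Delta$.
\end{itemize}
A small point to check is that the association of the disjunction produced by $\gright_2$ on lists matches the R$\vee$ steps; any mismatch is fixed by Cut with standard $\Gi$-derivable reassociation sequents.

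Hence $\Gc_{(\gleft_2,\gright_2)}\preceq\Gi$, and Theorem~\ref{thm-orevkov} gives the claim for sequents in $\Class_{(\gleft_2,\gright_2)}$.
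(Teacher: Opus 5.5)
Your proposal is correct and follows the paper's proof essentially step for step. You reduce via Theorem~\ref{thm-orevkov} and Remark~\ref{rmk-orevkov} to R$\forall_{(\gleft_2,\gright_2)}$ with $\Delta$ inhabited, then use Lemma~\ref{lem-Harrop} to split into the case of a disjunct $\gright_2(D_k)$ (closed by R$\vee_1$/R$\vee_2$) and the case $\gright_2(A[y/x])$ (closed by R$\forall$ and then R$\vee_2$). You also make explicit why the other three critical rules are vacuous for $i=2$, which the paper leaves implicit. Your reassociation Cut is unnecessary, since repeated R$\vee_1$/R$\vee_2$ reach any disjunct under any bracketing.
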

		\begin{proof}
			By Theorem~\ref{thm-orevkov} and Remark~\ref{rmk-orevkov}, it suffices
			to show the admissibility of
			R$\forall_{(\gleft_2,\gright_2)}$ in the case where $\Delta=D_1,\ldots,D_n$ is
			inhabited.
			Suppose that
			$\Gi\vdash
			\gleft_2(\Gamma)
			\Rightarrow
			\gright_2(\Delta,A[y/x])$, where $y$ does not appear freely in $\Gamma$ nor $\Delta$.
			By Lemma~\ref{lem-Harrop}, there are two cases.
			
			\begin{itemize}
				\item \textbf{Case
					$\Gi\vdash\gleft_2(\Gamma)\Rightarrow\gright_2(D_i)$ for some
					$1\leq i\leq n$.}
				We obtain
				\[
				\prftree[r]{\textnormal{dfn}}{
					\prftree[r]{\textnormal{R$\vee_1$, R$\vee_2$}}{
						\prftree{}{
							\gleft_2(\Gamma)
							\Rightarrow
							\gright_2(D_i)
						}
					}{
						\gleft_2(\Gamma)
						\Rightarrow
						\gright_2(D_1)\vee\cdots\vee
						\gright_2(D_n)\vee
						\forall x\,\gright_2(A)
					}
				}{
					\gleft_2(\Gamma)
					\Rightarrow
					\gright_2(\Delta,\forall x\,A)
				}
				\]
				
				\item \textbf{Case
					$\Gi\vdash\gleft_2(\Gamma)\Rightarrow\gright_2(A[y/x])$.}
				We obtain
				\[
				\prftree[r]{\textnormal{dfn}}{
					\prftree[r]{\textnormal{R$\vee_2$}}{
						\prftree[r]{\textnormal{R$\forall$}}{
							\prftree{}{
								\gleft_2(\Gamma)
								\Rightarrow
								\gright_2(A[y/x])
							}
						}{
							\gleft_2(\Gamma)
							\Rightarrow
							\forall x\,\gright_2(A)
						}
					}{
						\gleft_2(\Gamma)
						\Rightarrow
						\gright_2(D_1)\vee\cdots\vee
						\gright_2(D_n)\vee
						\forall x\,\gright_2(A)
					}
				}{
					\gleft_2(\Gamma)
					\Rightarrow
					\gright_2(\Delta,\forall x\,A)
				}
				\]
			\end{itemize}
			
			Thus, R$\forall_{(\gleft_2,\gright_2)}$ is admissible whenever
			$\Delta$ is inhabited.
		\end{proof}
		
		\begin{prop}[Orevkov \cite{Orevkov68}, Class 3]\
			Suppose that the sequent $\Gamma \Rightarrow A$ is in $\Class_{(\gleft_3,\gright_3)}$.
			Then $\Gc \vdash \Gamma \Rightarrow A$ if and only if $\Gi \vdash \Gamma \Rightarrow A$.
		\end{prop}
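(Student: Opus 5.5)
The plan is to mirror the proof for Class~2: reduce everything, via Theorem~\ref{thm-orevkov} and Remark~\ref{rmk-orevkov}, to a single critical rule, and then settle that rule by an induction on cut-free $\Gi$-derivations.

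\textbf{Step 1: only R$\forall$ is critical.} I go through the table of Remark~\ref{rmk-orevkov} for $(\gleft_3,\gright_3)$.
\begin{itemize}
\item R$\to_{(\gleft_3,\gright_3)}$ is never critical, since $\gright_3(A\to B)=\top$ always.
\item R$\exists_{(\gleft_3,\gright_3)}$ and L$\to_{(\gleft_3,\gright_3)}$ are never critical either. Both critical clauses require $\gright_3(A\vee B)=\top$, but $\gright_3(A\vee B)=\gright_3(A)\vee\gright_3(B)$ is syntactically a disjunction and so is never $\top$.
\end{itemize}
Hence, by Theorem~\ref{thm-orevkov}, it suffices to show that R$\forall_{(\gleft_3,\gright_3)}$ is admissible in $\Gi$ when $\Delta=D_1,\ldots,D_n$ is inhabited. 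Concretely, with $y$ not free in $\Gamma,\Delta$, I must show that $\Gi\vdash\gleft_3(\Gamma)\Rightarrow\gright_3(D_1)\vee\cdots\vee\gright_3(D_n)\vee\gright_3(A)[y/x]$ implies $\Gi\vdash\gleft_3(\Gamma)\Rightarrow\gright_3(D_1)\vee\cdots\vee\gright_3(D_n)\vee\forall x\,\gright_3(A)$.

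\textbf{Step 2: why a Harrop-style split will not work.} Unlike Class~2, I cannot split the succedent as in Lemma~\ref{lem-Harrop}. Negative disjunctions are allowed here, so for instance $Q\vee R\Rightarrow Q\vee R$ does not reduce to either disjunct. Instead I exploit the absence of negative $\forall$. Every formula in the image of $\gleft_3$ either is $\bot$ or has outermost symbol other than $\forall$. Moreover, the left rules L$\wedge$, L$\vee$, L$\exists$ and the right premiss of L$\to$ only place further $\gleft_3$-images into the antecedent, since $\gleft_3(B\to C)=\gright_3(B)\to\gleft_3(C)$. The left premiss of L$\to$ has succedent $\gright_3(B)$, which is an unrelated goal and can be kept verbatim.

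\textbf{Step 3: the generalised induction.} I would prove the following by induction on the height of a cut-free $\Gi$-derivation. Let $\Pi$ be a list of $\gleft_3$-images in which $y$ is not free. Let $E$ be a formula built by $\vee$ from subformula-occurrences of $\gright_3(D_1)\vee\cdots\vee\gright_3(D_n)$ together with (possibly) $\gright_3(A)[y/x]$. If $\Gi\vdash\Pi\Rightarrow E$, then $\Gi\vdash\Pi\Rightarrow E^\forall$, where $E^\forall$ replaces $\gright_3(A)[y/x]$ by $\forall x\,\gright_3(A)$. The cases are as follows.
\begin{itemize}
\item IS and L$\bot$ are immediate. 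If $E$ is atomic it does not involve the $y$-instance, apart from a trivial atomic $A$, which is handled by R$\forall$ since $y$ is fresh.
\item R$\vee_{1}$ and R$\vee_2$ reduce to the induction hypothesis. When the selected disjunct is exactly $\gright_3(A)[y/x]$, I apply R$\forall$ directly, which is legitimate since $y$ is not free in $\Pi$.
\item Right rules acting on a $D$-part leave $E^\forall$ unaffected.
\item L$\wedge$, L$\vee$ and L$\exists$ go through by the induction hypothesis. For L$\exists$, I rename the eigenvariable away from $y$ using the substitution property $\gleft_3(A[t/x])=\gleft_3(A)[t/x]$.
\item L$\to$: I keep the left premiss and use the induction hypothesis on the right premiss.
\item L$\forall$ cannot occur, since no antecedent formula has principal $\forall$.
\end{itemize}

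\textbf{Main obstacle.} I expect the main difficulty to be choosing the invariant on the succedent $E$ so that it is closed under the R$\vee$ steps that dismantle the nested disjunction. I would first try taking $E$ to range over all sub-disjunctions of $\gright_3(D_1)\vee\cdots\vee\gright_3(D_n)\vee\gright_3(A)[y/x]$, treating the $y$-disjunct as a marked leaf. In parallel, I must check that $y$ stays out of the antecedent throughout: it cannot enter through L$\exists$, after renaming, nor through L$\to$, since the new antecedent formulas are subformulas of $\Pi$.
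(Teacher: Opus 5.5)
Your proposal is correct and follows essentially the same route as the paper. Both reduce, via Theorem~\ref{thm-orevkov} and Remark~\ref{rmk-orevkov}, to admissibility of the translated R$\forall$ rule for inhabited $\Delta$, then induct on the cut-free \Gi-derivation of its premiss: left rules pass through because no antecedent formula has principal $\forall$ (the paper dismisses L$\forall$ via L$\bot$), and R$\forall$ is applied once the $y$-disjunct is isolated. The only difference is minor: you strengthen the induction hypothesis to sub-disjunctions of the succedent with a marked $y$-leaf, which handles the R$\vee$ steps directly and independently of bracketing, whereas the paper treats the R$\vee_1$ case with an auxiliary sequent and Cut.
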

		\begin{proof}
			By Theorem~\ref{thm-orevkov} and Remark~\ref{rmk-orevkov}, it suffices
			to show the admissibility of
			R$\forall_{(\gleft_3,\gright_3)}$ in the case where $\Delta$ is
			inhabited.
			Suppose that
			\[
			\Gi\vdash
			\gleft_3(\Gamma)
			\Rightarrow
			\gright_3(\Delta,A[y/x]),
			\qquad\text{i.e.,}\qquad
			\Gi\vdash
			\gleft_3(\Gamma)
			\Rightarrow
			\gright_3(\Delta)\vee\gright_3(A[y/x]),
			\]
			where $y$ does not occur freely in $\Gamma$ or $\Delta$.
			We prove, by induction on the derivation of this sequent, that
			\(
			\Gi\vdash
			\gleft_3(\Gamma)
			\Rightarrow
			\gright_3(\Delta,\forall x\,A).
			\)
			We distinguish cases according to the last rule in the derivation.
			\begin{itemize}
%
				\item \textbf{\textnormal{IS} and right-introduction rules other than \textnormal{R$\vee_1$} or \textnormal{R$\vee_2$}.}
				None of these cases can occur, since $\gright_3(A[y/x])$ is not
				principal in their conclusions.
				
				\item \textbf{Case \textnormal{L$\forall$}.}
				The conclusion is derivable directly by L$\bot$.
				
				\item \textbf{Case \textnormal{R$\vee_1$}.}
				Write $A=B\vee C$. We first establish the auxiliary sequent
				$\Gi\vdash
				\gright_3(\Delta)\vee\forall x\,\gright_3(B)
				\Rightarrow
				\gright_3(\Delta)\vee\forall x\bigl(\gright_3(B)\vee\gright_3(C)\bigr).$
				Indeed, we have the more general
				\[
				\prftree[r]{\textnormal{L$\vee$}}{
					\prftree[r]{\textnormal{R$\vee_1$}}{
						\prftree[r]{Refl$^+$}{}{
							D\Rightarrow D
						}
					}{
						D\Rightarrow
						D\vee\forall x(B\vee C)
					}
				}{
					\prftree[r]{\textnormal{R$\vee_2$}}{
						\prftree[r]{\textnormal{R$\forall$}}{
							\prftree[r]{\textnormal{R$\vee_1$}}{
								\prftree[r]{\textnormal{L$\forall$}}{
									\prftree[r]{\textnormal{Refl$^+$}}{}{
										B[y/x] \Rightarrow B[y/x]
									}
								}{
									\forall x\,B \Rightarrow B[y/x]
								}
							}{
								\forall x\,B \Rightarrow B[y/x]\vee C[y/x]
							}
						}{
							\forall x\,B \Rightarrow \forall x(B\vee C)
						}
					}{
						\forall x\,B
						\Rightarrow
						D\vee\forall x(B\vee C)
					}
				}{
					D\vee\forall x\,B
					\Rightarrow
					D\vee\forall x(B\vee C)
				}
				\]
				By the induction hypothesis and Cut,
				we obtain
				$\Gi\vdash
				\gleft_3(\Gamma)
				\Rightarrow
				\gright_3(\Delta,\forall x\,(B\vee C)).$
				
				\item \textbf{Case \textnormal{R$\vee_2$}.}
				The argument is analogous to the preceding case.
				
				\item \textbf{Remaining cases.}
				In all remaining cases, $\gright_3(A)[y/x]$ is not principal in
				the last inference. Moreover, $y$ occurs in the premiss only in
				$A[y/x]$. Hence the desired conclusion follows directly from the
				induction hypothesis.
				\qedhere
			\end{itemize}
		\end{proof}
		
		\begin{prop}[Orevkov \cite{Orevkov68}, Class 4]\
			Suppose that the sequent $\Gamma \Rightarrow A$ is in $\Class_{(\gleft_4,\gright_4)}$.
			Then $\Gc \vdash \Gamma \Rightarrow A$ if and only if $\Gi \vdash \Gamma \Rightarrow A$.
		\end{prop}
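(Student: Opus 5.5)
The plan is to argue exactly as for Class~1 (Proposition~\ref{prop-Orevkov}). By Theorem~\ref{thm-orevkov}, it suffices to prove $\Gc_{(\gleft_4,\gright_4)}\preceq\Gi$. The converse direction then follows because $\Gi\leq\Gc$. By Lemma~\ref{lem-Orevkov} and Remark~\ref{rmk-orevkov}, this reduces to the four critical rules R$\to$, R$\forall$, R$\exists$ and L$\to$, each translated by $(\gleft_4,\gright_4)$ and restricted to the instances and clauses listed in the table of Remark~\ref{rmk-orevkov}. I expect every one of these critical situations to be vacuous for $i=4$, so that no case-specific argument (unlike Lemma~\ref{lem-Harrop} for Class~2 or the induction for Class~3) should be needed.

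The key observation concerns how $\gright_4$ acts on succedents. Whenever a succedent has at least two elements, $\gright_4$ of it is by definition $\gright_4$ of a disjunction, and $\gright_4(A\vee B)=\top$. So the condition ``$\gright_4(A\vee B)\neq\top$'' in the table can never hold. This rules out the critical instances of R$\to_{(\gleft_4,\gright_4)}$ and R$\forall_{(\gleft_4,\gright_4)}$ with $\Delta$ inhabited.

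For R$\exists_{(\gleft_4,\gright_4)}$ with $\Delta$ empty, the only critical clause requires $\gright_4(\exists x\,A)\neq\top$. This fails, since $\gright_4(\exists x\,A):=\top$, and in that case the conclusion is derivable directly by R$\top$.

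For L$\to_{(\gleft_4,\gright_4)}$, the critical clause requires $\gleft_4(A\to B)\neq\bot$. This also fails, since $\gleft_4(A\to B):=\bot$, and then the conclusion follows by L$\bot$.

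The main point to be careful about is to re-read the case analysis preceding Remark~\ref{rmk-orevkov} and confirm that the conditions ``$\gright_i(A\vee B)$'' really refer to the translated succedent disjunction $\gright_i(\Delta,\dots)$ in the instances where $\Delta$ is inhabited. If so, the remaining (non-critical) cases are already handled there, using only R$\top$, L$\bot$ and instances of the $\Gi$ rules. With all four critical rules admissible, we obtain $\Gc_{(\gleft_4,\gright_4)}\preceq\Gi$, and Theorem~\ref{thm-orevkov} gives the statement for every sequent in $\Class_{(\gleft_4,\gright_4)}$.
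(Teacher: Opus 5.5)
Your proposal is correct and is essentially the paper's argument, which simply cites Theorem~\ref{thm-orevkov} and Remark~\ref{rmk-orevkov}. Your check that every critical row of that table is vacuous for $i=4$ (because $\gright_4(A\vee B)=\gright_4(\exists x\,A)=\top$ and $\gleft_4(A\to B)=\bot$), together with the converse via $\Gi\leq\Gc$, is exactly the verification the paper leaves implicit.
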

		
		\begin{proof}
			Direct consequence of Theorem~\ref{thm-orevkov} and Remark \ref{rmk-orevkov}.
		\end{proof}
		
		\section{Final remarks}
		
		A central feature of the framework developed here is that it separates the
		general proof-theoretic mechanism underlying conservation and translation
		results from the particular properties of the logics to which it is
		applied. This makes it possible to treat a variety of classical
		translations and conservation results within a common setting, while also
		making explicit which properties of the underlying calculi are needed in
		each application.
		
		In particular, the present framework
		provides another way of understanding how classical principles can be
		transferred to constructive settings without treating classical and
		constructive logic as entirely separate systems. In this respect, it
		resonates with Fred Richman's conception of intuitionistic logic as a
		generalisation of classical logic~\cite{Richman90,Richman94}.
		
		
		Several directions for future research arise naturally. Since all
		results in this work have been established using purely constructive
		reasoning, the framework is well suited for formalisation in proof
		assistants such as Agda, following the approach of
		\cite{BorsettoFellinUustaluWan2026}. While the various conservation and
		translation theorems considered here follow directly from our main result
		(Theorem~\ref{thm-main}) and its corollaries, their derivation relies on a
		collection of technical lemmata whose formal verification would constitute
		a natural next step.
		
		The applications to Orevkov's theorems also raise a natural question concerning minimal logic. The translations presented here to obtain the intuitionistic versions of Orevkov's results rely essentially on the rule L$\bot$: critical formulae can be translated to $\bot$, after which the corresponding occurrences can be eliminated using L$\bot$. This mechanism is unavailable in minimal logic, where L$\bot$ is not a rule of the calculus. Consequently, the translations presented here do not directly extend to the minimal-logic versions of Orevkov's results. It would be interesting to determine whether analogous translations can be found for minimal logic, possibly by replacing the use of $\bot$ with a different mechanism for eliminating critical operators.
		
		Another natural extension is to consider logical translations between
		distinct languages. Thus far we have focused on translations relating
		classical and constructive logics formulated over the same underlying
		language. Classical examples include the Gödel--McKinsey--Tarski embedding
		of intuitionistic logic into the modal logic $\mathbf{S4}$
		\cite{Godel1933Interpretation,HameenAnttilaVonPlato2023Godel,McKinseyTarski48,Negri2026},
		the standard translation of modal logic into first-order logic
		\cite{Kripke1963,vanBenthem1983}, and Girard's translation of intuitionistic
		logic into linear logic~\cite{Girard1987,FerreiraOlivaProtin26}. Such embeddings
		have long played a central role in proof theory and semantics, yielding
		relative consistency and conservativity results, semantic completeness
		theorems, and transfers of proof-theoretic techniques across different
		logical systems.
		Our framework is sufficiently general to accommodate translations of this
		kind. In particular, soundness follows directly from
		Theorem~\ref{thm-main-t}. Faithfulness, however, is more subtle. The
		criteria developed in this paper are tailored to translations between
		logics over a common language, and we currently lack a general condition
		ensuring faithfulness in the setting of translations between distinct
		logical languages. Identifying such a criterion would allow embeddings
		such as the Gödel--McKinsey--Tarski translation to be treated within the
		present framework.
		
		Although the present work has been motivated by applications in logic, the
		underlying methodology may also prove useful in other areas of mathematics,
		computer science, and formal philosophy. More broadly, it would be
		interesting to develop a proof-relevant refinement of the theory,
		potentially formulated in categorical terms, which could provide a deeper
		understanding of the structures underlying the framework.
		
		\section*{Acknowledgment}
		\noindent The author is grateful to all those who discussed this work at its various stages and generously shared their comments, questions, and suggestions. In particular, the author wishes to thank 
		Thorsten Altenkirch, 
		Ulrich Berger, 
		Hugo Herbelin, 
		Daniel Misselbeck-Wessel, 
		Sara Negri, 
		Peter Schuster, and
		Matteo Tesi.
		This research was partially supported by the \emph{Gruppo Nazionale per le Strutture Algebriche, Geometriche e le loro Applicazioni} (GNSAGA) of the \emph{Istituto Nazionale di Alta Matematica ``Francesco Severi''} (INdAM).
		
		
		
		\bibliographystyle{alpha}
		\bibliography{unifying-BIB}
		
	\end{document}